\documentclass[11pt,reqno]{amsart}

\usepackage[latin9]{luainputenc}
\usepackage{amsfonts}
\usepackage{amsmath}
\usepackage{amstext}
\usepackage{amsthm}
\usepackage{amssymb}
\usepackage{braket}
\usepackage{bm}
\usepackage{cancel}
\usepackage{caption}
\usepackage{color}
\usepackage{enumerate}
\usepackage{enumitem}
\usepackage{esint}
\usepackage{extarrows}
\usepackage[a4paper,margin=1in,footskip=0.25in]{geometry}
\usepackage{graphicx}
\usepackage{longtable}
\usepackage{mathrsfs}
\usepackage{mathtools}
\usepackage{dsfont}
\usepackage{mdframed}
\usepackage{stmaryrd}
\usepackage{soul}
\usepackage{subcaption}
\usepackage{tikz}
\usepackage{tikz-cd}
\usepackage{xcolor}
\usepackage{ulem}
\usepackage{hyperref}

\newtheorem{Def}{Definition}[section]
\newtheorem{Lemma}[Def]{Lemma}
\newtheorem{Theorem}{Theorem}
\newtheorem{Cor}[Def]{Corollary}
\newtheorem{Prop}[Def]{Proposition}

\newtheorem{Rem}[Def]{Remark}

\newtheorem{maintheorem}{Theorem}

\def \b {\beta}

\def \k {\kappa}
\def \e {\varepsilon}

\def \l {\lambda}

\def \t {\tau}
\def \th {\theta}
\def \o {\omega}

\def \L {\Lambda}
\def \Th {\Theta}

\def \cA {\mathcal{A}}

\def \cE {\mathcal{E}}

\newcommand{\N}{\ensuremath{\mathbb{N}}}   
\newcommand{\Z}{\ensuremath{\mathbb{Z}}}   
\newcommand{\R}{\ensuremath{\mathbb{R}}}   
\newcommand{\C}{\ensuremath{\mathbb{C}}}   

\renewcommand{\S}{\ensuremath{\mathbb{S}}} 

\def \one {{\mathds{1}}}

\def \p {\partial}

\def \dt  {\, \mbox{d}t}

\def \ddt  {\frac{\mbox{d\,\,}}{\mbox{d}t}}

\def \dd  {\mbox{d}}

\def \Re {\mathrm{Re}}
\def \Im {\mathrm{Im}}

\def \indices {\{1,\cdots,N\}}
\renewcommand{\mod}{\mathrm{mod} \text{ }}

\title{Graph-Induced Rotational Twisted States in Systems of Identical Oscillators}
\author[Nastassia Pouradier Duteil]{Nastassia Pouradier Duteil}
\address{Sorbonne Universit\'e, Inria, Universit\'e Paris-Diderot SPC, CNRS, Laboratoire Jacques-Louis Lions, Paris, France}
\email{nastassia.pouradier\_duteil@sorbonne-universite.fr}

\author[David Poyato]{David Poyato}
\address{Departamento de Matem\'atica Aplicada and Research Unit ``Modeling Nature'' (MNat), Facultad de Ciencias, Universidad de Granada, 18071 Granada, Spain}
\email{davidpoyato@ugr.es}

\author[David N Reynolds]{David N Reynolds}
\address{University of Warsaw, Institute of Applied Mathematics and Mechanics, Banacha 2, 02-097
Warszawa, Poland}
\email{d.reynolds@uw.edu.pl}

\date{July 2026}

\begin{document}

\maketitle

\begin{abstract}
    In this article, we study a new class of collective motion for identical coupled Stuart-Landau  oscillators on graphs.
This model was previously known to converge to the synchronized state for a certain class of initial data. Here, we show that when the interaction matrix is circulant, there exists another class of attractors, in which the particles are uniformly distributed on a circle, reminiscent of the \textit{twisted states} known for the Kuramoto model, but which rotate around the origin. However, contrary to the classical Kuramoto case, here the rotation is caused by the asymmetry of the graph structure, and not by the natural frequencies.
 We identify conditions for the existence and local stability of the \textit{rotational twisted states}, both in the Stuart-Landau model and in the Kuramoto model. We also provide sufficient conditions for the existence and stability of the synchronized state, which can co-exist with the rotational twisted state in a certain parameter region. We provide a generalization of the class of interaction matrices able to generate rotational twisted states via leader-follower interaction matrices. We show that heterogeneous rotational twisted states can also exist for a system of heterogeneous oscillators, attracted to different individual target amplitudes. This study is accompanied by numerical simulations that illustrate the possible behaviors of the system, which also include metastable dynamics and a chimera state.
    
\end{abstract}

\section{Introduction}

Interacting particle systems refer to a class of models used to study self-organization in many different application fields, as they can display a wide range of collective behavior.
For instance, 
depending on the support of the interaction function, Hegselmann-Krause-type models for opinion dynamics can exhibit convergence to consensus (agreement of all opinions), clustering or polarization \cite{HegselmannKrause02}. Depending on the coupling strength, Kuramoto-type models for coupled oscillators are known to display either synchronized or incoherent states \cite{strogatz2000}.
Second-order models are often used to study animal behavior and allows one to reproduce patterns such as flocking (i.e. alignment of the velocities) as in the Cucker-Smale model \cite{CuckerSmale07}, or the Vicsek model \cite{Vicsek95}.


In particular, fish schools are known to display two main types of collective behavior: flocking (also called \textit{schooling} for fish), in which all individuals swim at the same velocity, or milling, in which the group rotates around its center of mass. Importantly, a fish school's capacity to form -- and to transition between -- collective patterns is related to its survival capacity \cite{Ashraf16,Calovi14}.
The transition between flocking and milling has been studied in a laboratory setting, and it was shown that it can be induced by varying the ambient lighting \cite{Lafoux23}. These two phases have also been reproduced numerically \cite{Couzin02}, using second-order models of the form: 
\begin{equation}\label{eq:fishmodel}
\begin{cases}
\displaystyle\ddt x_j = v_j\\
\displaystyle \ddt v_j =  c_{\mathrm{AR}}\nabla_{x_j} \sum_{l=1}^N U(|x_j-x_l|) +  c_{\mathrm A}\sum_{l=1}^N \phi(x_j,x_l)(v_l-v_j) +c_{\mathrm{SP}}\left(1-\frac{|v_j|^2}{\beta^2}\right)v_j,
\end{cases}
\end{equation}
in which $(x_i,v_i)_{i\in\{1,\cdots,N\}}$ represent the positions and velocities of $N$ interacting individuals.
The velocity evolution in Model \eqref{eq:fishmodel} is composed of three terms representing the main forces contributing to each individual's acceleration: attraction-repulsion, alignment, and self-propulsion/ friction. Attraction (usually active at large distance), repulsion (at short distance) and alignment (at intermediate distance) are so-called ``social forces'' modeling the influence of each individual on the others, while the self-propulsion/friction force models the desire of each individual to swim at a target speed. Mathematically, conditions for convergence to flocking are well-known for the Cucker-Smale model \cite{CuckerSmale07,HaLiu09}, in which only the alignment term is considered (i.e. $c_{\mathrm{SP}}=c_{\mathrm{AR}}=0$ in \eqref{eq:fishmodel}). Interestingly, flocking and milling have been proven to co-exist as stable solutions of \eqref{eq:fishmodel} in a model containing the attraction-repulsion term and the self-propulsion term (i.e. $c_\mathrm{A}=0$) in \cite{CarrilloDOrsognaPanferov09}.
 
In the present work, we ask ourselves whether the alignment and the self-propulsion terms (i.e. $c_\mathrm{AR}=0$ in Model \eqref{eq:fishmodel}) could also produce solutions of both flocking and milling types. This system was first studied in \cite{Ha_2010}, providing flocking results under a narrow class of initial data. In \cite{LRS} the authors proved an unconditional flocking result for absolute kernels in the alignment dominant regime $( c_\mathrm{AR} \gtrsim c_\mathrm{SP})$, while for kernels satisfying the classical heavy-tail condition, flocking can only be shown for sectorial solutions which take initial velocities on one side of a hyperplane. This leaves open the possibility that in weaker communication regimes where the self-propulsion/friction force dominates $(c_\mathrm{SP}\gtrsim c_\mathrm{AR})$ that milling phenomena could be recovered.

An important note is that the interaction function $\phi$ used in \eqref{eq:fishmodel} modulates the interaction between two individuals by the distance between them: classically, $\phi$ is taken to be a positive decreasing function, to model the fact that the closer two individuals are, the more likely they are to align their velocities. With such an isotropic interaction function and longitudinal forcing term, there can be no emergent milling behavior, as the symmetry precludes a stable rotational direction from being established.

Anisotropic interaction functions have also been proposed in the literature, and have been shown to lead to different kinds of collective behavior \cite{CarrilloChoiHauray2018,CristianiFrascaPiccoli2011}. From the modeling viewpoint, anisotropic interaction functions allows for taking into account the cone of vision of individuals, for instance by introducing a blind zone in the back. 
In \cite{CristianiFrascaPiccoli2011}, anisotropic attraction-repulsion has been shown numerically to lead to interesting collective pattern such as line formation or V-shape formation. 

Here, we propose to replace the interaction function by an interaction matrix $(A_{jl})_{j,l\in\{1,\cdots,N\}}$, modeling the existence of an underlying graph of communication. This choice renders the particle system non-exchangeable: the interaction between individuals no longer depends solely on their respective positions and velocities, but also on their identity (i.e. their labels). We write the non-exchangeable model with self-propulsion and alignment as:

\begin{equation}\label{eq:fishmodel2}
\begin{cases}
\displaystyle\ddt x_j = v_j\\
\displaystyle \ddt v_j = \sum_{l=1}^N A_{jl}(v_l-v_j) +\frac{1}{\tau}\left(1-\frac{|v_j|^2}{\beta^2}\right)v_j.
\end{cases}
\end{equation}

Originally inspired by the field of statistical mechanics, the first efforts at modeling self-organization in interacting particle systems made the central assumption that particles are \textit{exchangeable} (i.e. identical). More specifically, the interaction between any two particles is classically viewed as depending only on their respective positions and velocities, and not on their \textit{identities}.
This is a reasonable assumption in many applications, and many celebrated models assuming exchangeability have been validated experimentally \cite{Aoki82, Ballerini08}. However, there is evidence that in some systems, there is a large variability within the population, both in terms of phenotypical traits and of preferred interactions with a few select individuals. For instance, a mean coefficient of size variation of 15.7\%, was reported in a survey conducted for a fish population in a Canadian lake \cite{KrauseGodinBrown1996}. Neural networks are another example of interacting particle systems in which the interactions are not based on the proximity of neurons, but rather on their connectedness in the network \cite{GuRohling19}. 

In this perspective, a wealth of new works has emerged with the aim of removing this assumption, in order to study the collective behavior of a \textit{non-exchangeable} particle system. 
In the case of first-order Hegselmann-Krause-type models, this problem has attracted a large amount of attention, and convergence to consensus was still proven to hold in the non-exchangeable setting, under various connectivity conditions on the underlying graph of interactions, see e.g. \cite{B22, Moreau2005, MT14, PROSKURNIKOV2018166}. 
More complex types of collective behavior were also exhibited in the case of mixed positive and negative interaction coefficients, such as the periodic ``dancing equilibrium''~\cite{McQuadePiccoliPouradierDuteil19}.

Under the modeling assumption $\phi(x_j,x_k)=A_{jk}$, the system decouples and we need only investigate the velocity equation, where the positions can be obtained by integrating. Further, since the phenomena we are investigating is the bistability of \textit{schooling} and \textit{milling}, it is natural to consider the evolution of particles in the complex plane
\begin{equation}\tag{SL}\label{eq:mainmodel}
\displaystyle \ddt z_j = \sum_{l=1}^N A_{jl}(z_l-z_j) +\frac{1}{\tau}\left(1-\frac{|z_j|^2}{\beta^2}\right)z_j, \qquad j\in\{1,\cdots,N\}, \quad z_j\in \mathbb{C}.
\end{equation}

The model \eqref{eq:mainmodel} is interesting in its own right as a system of coupled identical Stuart-Landau oscillators \cite{millan2025synchronizationcoupledstuartlandauoscillators}. As a model of synchronization it has been used extensively in the neuroscience community to model mesoscopic brain activity with applications to epilepsy and Alzheimer's disease as well as recent applications to machine learning and nonlinear opinion dynamics \cite{PhysRevLett.93.104101,PhysRevE.75.056206,RevModPhys.90.031001,PAD2024,reynolds2025consensuspolarizationoptimizationmean,Reynolds_2025,Sahoo_2019,10.1371/journal.pcbi.1010662,Thome_2026,NMDDSSCKLHG2023,10.1145/3774904.3792675}.

The Stuart-Landau oscillator is the normal form of the super-critical Hopf-bifurcation and is the simplest description of a limit-cycle oscillator \cite{PANTELEY2015645}. The seminal work of Yoshiki Kuramoto \cite{10.1007/BFb0013365} considered the weak-coupling limit (phase reduction) of coupled Stuart-Landau oscillators, producing what is now known as the Kuramoto model of phase-coupled oscillators:
\begin{equation}\tag{K}\label{eq:Kur}
    \displaystyle \ddt \th_j = \sum_{l=1}^N A_{jl}\sin(\th_l-\th_j), \qquad j \in \{1,\cdots,N\}, \quad \th_j\in \mathbb{S}.
\end{equation}
Formally, one obtains \eqref{eq:Kur} in the limit $\tau\to 0$ in \eqref{eq:mainmodel}.
Providing an exhaustive list of references for the Kuramoto model would be time and space prohibitive, however we refer to the following review papers for the requisite background \cite{acebron2005,rodrigues2016,strogatz2000}.

One may immediately notice the absence of natural frequencies within \eqref{eq:mainmodel} and \eqref{eq:Kur}, which usually produce the rotational motion seen in these synchronization models. In this work, we consider identical oscillators $(\omega_j=0)$, to see how rotational (milling) states can arise from the nonexchangeability introduced in the graph structure. The Kuramoto system with identical natural frequencies has come to be known as the homogeneous Kuramoto model.

The homogeneous Kuramoto model on graphs has seen much attention, in particular in the study of existence of multiple stable equilibria of the system. It is not hard to see that the fully synchronized state $\th_j=\bar{\th}$ for all $j\in\indices$ is a stable equilibrium for any connected graph. Taylor \cite{taylor2012} first noticed that if the network connectivity was such that if each node was on average connected to at least 93.95\% of the other nodes, then this could be the only stable equilibrium of \eqref{eq:Kur}. This sparked a subsequent research program which attempted to improve this result. Ling  et  al \cite{ling2019}  improved  this  bound  to  79.29\%,  Lu and Steinberger \cite{lu2020}  to  78.89\% and  Kasabov et al. \cite{kassabov2019} to  75\%  which  they  conjecture  to  be  the  exact  threshold. Meanwhile Yoneda et al. \cite{yoneda2021} proved a 68.38\% best lower bound threshold.  The work of Townsend et al. \cite{townsend2017} also gave a previous best lower bound of 68.28\%, but more importantly studied circulant graph structures.

The circulant graph structure is critical for the following work. Besides the work of Townsend et al., Medvedev and Tang \cite{MedTang2015} investigated the connection between circulant graph structures in the Kuramoto model and Cayley graphs, providing stability for the so-called twisted states of the Kuramoto model. Twisted states occur when the oscillators are uniformly distributed in such a way that the symmetry of the graph provides a stable fixed point \cite{Berglund_2025,Med2}. However, a key element to the works thus far is that they all assume that the underlying graph is symmetric. We remove this assumption and study systems \eqref{eq:mainmodel} and \eqref{eq:Kur} with an underlying directed circulant graph.

The asymmetry in the graph structure leads to what we call \textit{rotational twisted states} where the oscillators, still uniformly distributed, begin to rotate at a common speed determined by the degree of asymmetry. The parameter $\tau$ in \eqref{eq:mainmodel} is known as the relaxation time for a Stuart-Landau oscillator and determines how long it takes for an uncoupled oscillator to return to its limit-cycle. The relationship between $\tau$ and the graph coupling plays a critical role in the existence of rotational twisted states. Indeed, as $\tau \to \infty$ in \eqref{eq:mainmodel} we see that the consensus mechanism dominates and the only stable asymptotic state should be full synchronization. This fact is sometimes called \textit{practical synchronization} and was shown to hold for Stuart-Landau oscillators by Panteley et al. \cite{Panteley01022020} for generic graph structures.

We begin our investigation with the $\tau \to 0$ regime yielding the homogeneous Kuramoto model \eqref{eq:Kur}. We provide a sharp criterion for the stability of rotational twisted states. For small but positive values of $\tau$, this criterion carries over to the Stuart-Landau system \eqref{eq:mainmodel} up until a finite critical value $\tau^*$ at which point a phase transition occurs and the rotational twisted states cease to exist. Beyond this threshold, dynamics tend to eventually converge to the fully synchronized state, however, we also uncover two metastable behaviors. In one case the dynamics retain their rotational structure, converging nearly to zero, before slowly converging to the synchronized state, while in the other, the rotational structure breaks down (before the critical value $\tau^*$) without radial constriction and the dynamics then provide a swarming-like behavior as they converge to the synchronized state. These types of metastable behaviors are seen in the Transformer model \cite{geshkovski2024dynamic,geshkovski2025mathematical} and have recently been taken advantage of in training neural networks in order to mitigate the oversmoothing problem \cite{10.1145/3774904.3792675}.

We also investigate including heterogeneity into the $\beta$ parameter, indicating a different desired amplitude intrinsic to each oscillator. Through a perturbative analysis we can provide a more diverse structure for rotational twisted states. Lastly, we explore the question of whether the circulant structure of the interaction matrix is necessary for the existence of rotational states. As a first step towards answering this question, we investigate a larger class of interaction matrices modeling leader-follower dynamics, which provides another phase transition between the stable rotational twisted state and a kind of chimera state where the leaders rotate and the followers fail to adopt the rotational behavior of the leaders.

The outline of the paper is as follows. Section \ref{s:2} provides the relevant preliminary material, notation, and statement of main results. Section \ref{s:3} proves existence and the stability criteria for the Kuramoto rotational twisted states. Section \ref{s:4} establishes existence of the rotational twisted states for the Stuart-Landau system, identifying the phase transition at $\tau^*>0$ for which the state ceases to exist. Stability for a potentially smaller but positive $\tau^{**}$ is also provided. Section \ref {s:5} proves an unconditional synchronization result for $\tau$ large enough. Section \ref{s:6} introduces the leader-follower dynamics providing critical thresholds for the transition between the leader-follower rotational twisted state and the chimera state. Section \ref{s:7} is dedicated to a numerical investigation of each of the regimes with particular emphasis given to metastable rotational twisted states.

\section{Preliminaries}\label{s:2}

\subsection{Notations and definitions}

We first note that as we will be working in the complex plane we let $i=\sqrt{-1}$ be the imaginary unit, and avoid utilizing $i$ as an index. Further, we let $\S=\R/2\pi\Z$ denote the one-dimensional periodic domain.
We denote by $\N$ the set of positive integers (excluding $0$). 
For all integers $k\in\Z$ and $N\in\N$, we let $k[N]:=k\, \mod N\in \{0,\cdots,N-1\}$ denote the remainder of the Euclidean division of $k$ by $N$, and $k \langle N \rangle :=(k-1) [N]+1 \in \{1,\cdots,N\}$.

Throughout this article, $\phi_j:=\frac{2\pi j}{N}$, $j\in\indices$ will denote the angles corresponding to the $N$-th roots of unity.

The stability of asymptotic states of \eqref{eq:mainmodel} and \eqref{eq:Kur} is the predominant focus of this work. We first define the two states to which the dynamics of each system converges.

\begin{Def}\label{sync}
    A solution of \eqref{eq:Kur} is at a \emph{synchronized state} at time $t$ if there exists $\theta^\infty\in\S$ such that for all $j\in\indices$
    \begin{align*}
        \th_j(t)&=\th^\infty.
    \end{align*}
    Analogously, a solution of \eqref{eq:mainmodel} is at a \emph{synchronized state} at time $t$ if there exists $z^\infty\in\C$ such that for all $j\in\indices$, 
    \begin{align*}
        z_j(t)&=z^\infty.
    \end{align*}
\end{Def}

\begin{Def}\label{RTS}
    For $q\in\Z$, a solution of \eqref{eq:Kur} is at a \emph{$q$-rotational twisted state} if there exists $\omega\in\R$ such that for all $j\in\indices$,
    \begin{align*}
        \th_j(t)&=\frac{2\pi jq}{N}+\th(t) 
    \end{align*}
    where $\theta$ satisfies 
$\displaystyle\ddt \th(t)=\omega$.
The case $q=0$ returns the synchronized state while in the case $q=1$, we will simply use the term \emph{rotational twisted case}.
    
    Analogously, a solution of \eqref{eq:mainmodel} is at a \emph{rotational twisted state} if there exists $\omega\in\R$ and $R\in\R_+$ such that for all $t\geq 0$,  for every $j\in\indices$, $z_j(t)=r_j(t)e^{i\th_j(t)}$ with
    \begin{align*}
        \th_j(t)=\phi_j+\th(t), \qquad
        r_j(t)=R, 
    \end{align*}
    in which $\theta$ satisfies 
$\displaystyle\ddt \th(t)=\omega$, and $\displaystyle \phi_j:=\frac{2\pi j}{N}$.

    Moreover, a solution of \eqref{eq:mainmodel} is at a \emph{heterogeneous rotational twisted state} if there exists $\omega\in\R$, $(R_j)_{j\in\indices}\in(\R_+)^N$ and $(\theta_j^\infty)_{j\in\indices}\in\S^N$ such that for all $t\geq 0$,  for every $j\in\indices$, $z_j(t)=r_j(t)e^{i\th_j(t)}$ with
    \begin{align*}
        \th_j(t)=\theta^\infty_j+\th(t), \qquad
        r_j(t)=R_j,
    \end{align*}
    in which $\theta$ satisfies 
$\displaystyle\ddt \th(t)=\omega$.
\end{Def}

It turns out that the class of connectivity matrices known as circulant matrices lends itself to producing the \textit{rotational twisted state}.
 
\begin{Def}\label{circulant}
    A matrix $A\in M_N(\R)$ is said to be \textit{circulant} if there exists $a=(a_0,...,a_{N-1})\in\R^N$ such that for each $j,l\in\indices$, 
    \[A_{jl}=a_{(l-j) [N]}.\]
It takes the following form and in particular the whole matrix is generated by its first row $(a_0,...,a_{N-1})$.
\[
    \begin{pmatrix}
        a_0 & a_1 & \ldots & a_{N-2} & a_{N-1} \\
        a_{N-1} & a_0 & a_1 & \ldots & a_{N-2} \\
        \vdots & \ddots & \ddots & \ddots & \vdots\\
         \vdots & & \ddots & \ddots & \vdots\\
         & & & & a_1\\
         a_1 & \ldots &\ldots & a_{N-1} & a_0
    \end{pmatrix}
\]
\end{Def}
An important note about the circulant matrix here is that we do not require symmetry, in fact the asymmetry is precisely what produces the rotational dynamics. Despite the lack of symmetry an important fact is that the \textit{degree} of each node is identical.
\begin{align*}
    \mathrm{deg}(j)=d_j=\sum_{l\neq j} A_{jl}=\sum_{l+1\neq j+1} A_{j+1,l+1}=d_{j+1}\equiv d.
\end{align*}
Further, circulant matrices are easily diagonalizable. The transition matrix is provided by the Fourier modes so that for $\phi_j=\frac{2\pi j}{N}$, the normalized eigenvectors
are orthogonal, and provided by
\begin{align}\label{circ-eigenv}
    w_k=\frac{1}{\sqrt{N}}(1, e^{ik\phi_1},e^{ik\phi_2},..., e^{ik\phi_{N-1}})^T, \ \ \ k \in\{0,\cdots,N-1\},
\end{align}
with the eigenvalues provided by
\begin{align}\label{circ-eigenvalues}
    \l_k=a_0+a_1e^{ik\phi_1}+a_2e^{ik\phi_{2}}+...+a_{N-1}e^{ik\phi_{N-1}}, \ \ \ k\in\{0,\cdots,N-1\}.
\end{align}

We also recall that a matrix 
$A\in M_N(\R_+)$ characterizes a directed weighted graph $G_N = (V,E,W)$ in the following way: 
\begin{itemize}
    \item The $N$ vertices $V$ of the graph are given by the set of indices $\{1,\cdots,N\}$;
    \item The directed edges $E$ are defined by the non-zero coefficients of $A$, with $(i, j)\in E$ if and only if $A_{ij}>0$;
    \item The edge weight of each edge $(i,j)\in E$ is given by the coefficients $A_{ij}>0$.
\end{itemize}
A \textit{path} of length $q\leq N$ is a sequence of $q$ nodes $(j_k)_{k\in\{1,\cdots,q\}}$ satisfying $A_{j_k,j_{k+1}}>0$ for all $k\in\{1,\cdots,q-1\}$. If $j_1=j_q$, then the path is called a \textit{loop}.

The graph is said to have a \textit{spanning tree} if there exists a node $j_0\in\indices$ such that for every $k\in \indices\setminus j_0$, there exists a path from $j_0$ to $k$. The node $j_0$ is referred to as the \textit{root} node.

If for every $(j,k)\in\indices$, there exists a path $(n_m)_{m\in\{1,\cdots,l\}}$ satisfying $n_1=j$ and $n_l=k$, then the graph is said to be \textit{strongly connected}.

An example of graph associated with a circulant matrix is given in Fig. \ref{Fig:Network}.

\begin{Rem}\label{Rem:spanningtree}
Note that if $A$ is a circulant matrix, its associated graph $G(A)$ is strongly connected if and only if it has a spanning tree. Indeed, due to the circulant structure of the matrix, every node can play the role of the root node, which means that there is a path from every node to every other node.
\end{Rem}

We also present here a cone condition for circulant matrices which will provide the sharp cutoff for stability of the \textit{twisted rotational states} associated to a circulant matrix.

\begin{Def}[Cone Condition]\label{def:cone}
     Let $a=(a_0,a_1, \cdots a_{N-1})\in\R^N$, and let $A$ be a circulant matrix generated by the vector $a$. Let $\phi_j=\frac{2\pi j}{N}$ for $j\in\indices$ be the angles of the $N$-th roots of unity. 
     $A$ is said to satisfy the cone condition if the off-diagonal elements given by $\bar{a}=(a_1, \ldots,a_{N-1}) \in \mathbb{R}^{N-1}$ are in the open cone
    \begin{align}\tag{C}\label{cone-condition}
        \mathcal{C}_N:=\{\bar{a}\in \mathbb{R}^{N-1} : (W\bar{a})_l>0 \ \text{ for all } l\in\{1,\cdots,N-1\}\},
    \end{align}
    where $W\in M_{N-1}(\R)$ is given by
    \begin{align*}
        W_{kj}=\cos(\phi_{j})(1-\cos(k\phi_{j})), \ \ \ \text{ for all } j,k\in\{1,\cdots, N-1\}.
    \end{align*}
\end{Def}

\begin{Rem}
The fact that a circulant matrix satisfies the cone condition of Definition \ref{def:cone} implies that the corresponding directed graph is strongly connected. Suppose that the circulant graph does not have a spanning tree. Then there exists at least one sub-cycle of length $n$ where $nj=N$ for an integer $j$. Then $\cos(n\phi_j)=1$ implying $(W\bar{a})_n=0$ and the cone condition fails. Thus by the contrapositive a circulant graph satisfying the cone condition has a spanning tree, which in turn implies that it is strongly connected (see Remark \ref{Rem:spanningtree}).
\end{Rem}

\subsection{Main results}
With the definitions in hand, we briefly present the connection between the relevant models discussed here. The Kuramoto model \eqref{eq:Kur} is formally obtained in the weak coupling regime by letting $\tau\to 0$ in \eqref{eq:mainmodel} as in \cite{Kuramoto75}.  Indeed, considering the amplitude and phase dynamics of \eqref{eq:mainmodel}
\begin{align}
    \ddt r_j &=\sum_{l=1}^NA_{jl}(\cos(\th_j-\th_l)r_l-r_j)+\frac{1}{\tau}\left(1-\frac{r_j^2}{\beta^2}\right)r_j,\label{SL1}\\
    \ddt \th_j &=\sum_{l=1}^N A_{jl}\frac{r_l}{r_j}\sin(\th_l-\th_j)\label{SL2},
\end{align}
multiplying \eqref{SL1} by $\tau$ and sending $\tau\to 0$ formally yields $r_j=\beta$ for each $j$, and thus \eqref{SL2} reduces to \eqref{eq:Kur}. This phase reduction can also be made rigorous, see \cite{Hoppensteadt1997}. In particular, the interesting regime for analyzing synchronous behavior in the phase-reduced setting is when the natural frequencies $\omega_i=O(\varepsilon)$, for $\varepsilon>0$ being the scale for the weak-coupling. In our case, all natural frequencies are identically zero and hence this condition holds automatically.

On the other hand, sending $\tau\to \infty$ in \eqref{eq:mainmodel} produces the Abelson consensus model
\begin{align}\label{Abelson}
    \ddt z_j=\sum_{l=1}^N A_{jl}(z_l-z_j),
\end{align}
which is known to converge to the \textit{synchronized state} so long as the underlying graph structure has a directed spanning tree, so that the generalized algebraic connectivity is positive \cite{PROSKURNIKOV2018166}.\\

Our first result contributes to the Kuramoto theory by establishing the existence and stability of \textit{rotational twisted states} where the rotation speed is induced by the asymmetries in the circulant graph structure rather than the natural frequency.
\begin{maintheorem}\label{t:Kurrotpre}
    Let $N\geq 5$. Let $A$ be a circulant matrix satisfying the cone condition of Definition \ref{def:cone}. Then for any $c\in\S$, the \textit{rotational twisted state} defined by
    \begin{equation}\label{eq:RotationalStateKuramoto}
        \th_j(t)=\phi_j+\omega t +c, \quad j\in\indices,
    \end{equation}
    where $\displaystyle \omega = \sum_{l=1}^{N-1}a_l\sin(\phi_l)$ and $\displaystyle  \phi_j=\frac{2\pi j}{N}$,
    is linearly stable for the Kuramoto model \eqref{eq:Kur}.
\end{maintheorem}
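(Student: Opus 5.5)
The plan is to check that \eqref{eq:RotationalStateKuramoto} solves \eqref{eq:Kur}, and then to linearize in a frame rotating with angular velocity $\omega$. In that frame the linearized system is autonomous and has a circulant matrix. The circulant diagonalization \eqref{circ-eigenv}--\eqref{circ-eigenvalues} then gives its spectrum explicitly, and the real parts of the eigenvalues turn out to be exactly the negatives of the components of $W\bar a$ in the cone condition \eqref{cone-condition}.

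\emph{Step 1 (it is a solution).} Substitute $\th_j=\phi_j+\omega t+c$ into \eqref{eq:Kur} and use $A_{jl}=a_{(l-j)[N]}$. Since $\phi_l-\phi_j\equiv\phi_{(l-j)[N]}$ modulo $2\pi$, the reindexing $m=(l-j)[N]$ gives
\begin{equation*}
\sum_{l=1}^N A_{jl}\sin(\phi_l-\phi_j)=\sum_{m=1}^{N-1}a_m\sin(\phi_m)=\omega .
\end{equation*}
The $m=0$ term vanishes. The right-hand side does not depend on $j$, so $\ddt\th_j=\omega$ for every $j$, as required.

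\emph{Step 2 (linearization).} Write $\th_j=\phi_j+\omega t+c+\eta_j$ and linearize $\sin(\phi_l-\phi_j+\eta_l-\eta_j)$ around $\eta=0$. This gives
\begin{equation*}
\ddt\eta_j=\sum_{m=1}^{N-1}a_m\cos(\phi_m)\,\bigl(\eta_{(j+m)\langle N\rangle}-\eta_j\bigr).
\end{equation*}
The terms $\omega t+c$ cancel in the phase differences, so this system is autonomous. Its matrix $L$ is circulant, with off-diagonal generator $b_m=a_m\cos\phi_m$ and diagonal entry $-\sum_m b_m$. By \eqref{circ-eigenvalues}, the eigenvalue associated with the Fourier vector $w_k$ is
\begin{equation*}
\mu_k=\sum_{m=1}^{N-1}a_m\cos(\phi_m)\bigl(e^{ik\phi_m}-1\bigr),
\end{equation*}
and therefore
\begin{equation*}
\Re\,\mu_k=-\sum_{m=1}^{N-1}\cos(\phi_m)\bigl(1-\cos(k\phi_m)\bigr)a_m=-(W\bar a)_k .
\end{equation*}

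\emph{Step 3 (spectral conclusion).} The cone condition gives $\Re\,\mu_k<0$ for every $k\in\{1,\dots,N-1\}$. The remaining eigenvalue is $\mu_0=0$, with eigenvector $w_0\propto(1,\dots,1)$. This direction is exactly the neutral direction produced by the rotational symmetry, i.e.\ by shifting the constant $c$. Every circulant matrix is normal, and $L$ is diagonalized by the unitary basis $(w_k)_k$. Hence there is no Jordan block, and $e^{tL}$ is uniformly bounded. Perturbations decay exponentially on $w_0^\perp=\{\sum_j\eta_j=0\}$, at rate $\min_k(W\bar a)_k$, while the mean of $\eta$ is conserved. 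This is linear stability, and it is asymptotic stability transversally to the family of states parametrized by $c\in\S$.

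The computations above are routine. The delicate point, which I expect to be the main obstacle, is the precise meaning of ``linearly stable'' in the presence of the zero eigenvalue. The zero mode cannot be removed, because of the symmetry $c\mapsto c+\text{const}$. The statement therefore has to be read either as Lyapunov stability of the linearization, or as orbital asymptotic stability of the circle of rotational twisted states. I will state the result in this second form, which the normality of $L$ makes clean. The hypothesis $N\ge5$ should enter only to guarantee that the cone $\mathcal C_N$ is nonempty, so that the theorem is not vacuous; it is not needed in the argument itself.
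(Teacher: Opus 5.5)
Your proof is correct and is essentially the paper's argument: check the ansatz by reindexing, pass to the frame rotating at speed $\omega$, observe that the Jacobian is the circulant matrix generated by $(-\rho, a_1\cos\phi_1,\dots,a_{N-1}\cos\phi_{N-1})$, and read off $\Re\mu_k=-(W\bar a)_k$ from the Fourier diagonalization. Your use of normality to handle the neutral mode $\mu_0=0$ is a bit more careful than the paper, which simply attributes that mode to rotational invariance.

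One small correction to your closing aside: you are right that $N\ge 5$ is unused, but it is not what guarantees $\mathcal C_N\neq\emptyset$. The cone $\mathcal C_4$ is empty because $(W\bar a)_2\equiv 0$, whereas $\mathcal C_3$ contains e.g.\ $a_1=a_2=-1$ (there $(W\bar a)_k=-\tfrac34(a_1+a_2)$ for $k=1,2$), so your argument also covers $N=3$ with repulsive coupling.
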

Note that assuming symmetry in $A$ yields $\omega=0$, producing the stationary twisted states of the Kuramoto model \cite{townsend2017}. Further, the constant $c\in \S$ arises from the rotational invariance of the system, which provides a 1-parameter family of solutions.

Our second contribution shows that the foundational Stuart-Landau system \eqref{eq:mainmodel} also has stable rotational twisted states for small, but positive values of $\tau$.

\begin{maintheorem}\label{t:SLcirc}
    Let $N\geq 5$. Let $A$ be a nonnegative circulant matrix satisfying the cone condition of Definition \ref{def:cone}. 
    Let $d= \sum_{l=1}^{N-1} a_l$,  $\rho=\sum_{l=1}^{N-1}a_l\cos(\phi_l)$, and $\tau^*:=(\rho-d)^{-1}$.
    Then there exists $\tau^{**}\leq\tau^*$ such that for all $\tau\leq \tau^{**}$, for all $c\in\S$, the \textit{rotational twisted state} defined by
    \begin{align*}
        \th_j(t)&=\phi_j+\omega t +c, \quad j\in\indices ,\\
        r_j(t)&= R , \quad j\in\indices,
    \end{align*}
    in which 
    \[R:=\beta\sqrt{\tau\left(\rho-d+\frac{1}{\tau}\right)},\quad \omega = \sum_{l=1}^{N-1}a_l\sin(\phi_l) ,\quad \displaystyle  \phi_j=\frac{2\pi j}{N},\]
    is linearly stable for  the Stuart-Landau model \eqref{eq:mainmodel}. 
\end{maintheorem}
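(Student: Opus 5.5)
The plan is to first verify that the ansatz is an exact solution, then linearize in a co-rotating frame. The linearization is block-circulant, so a discrete Fourier transform reduces it to $N$ decoupled $2\times 2$ problems, which I would then treat perturbatively in the small parameter $\tau$.

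\emph{Existence.} Insert $\th_j=\phi_j+\omega t+c$ and $r_j=R$ into the polar form \eqref{SL1}--\eqref{SL2}. Since $A_{jl}=a_{(l-j)[N]}$ and $\phi_l-\phi_j=\phi_{(l-j)[N]}$ mod $2\pi$, the phase equation \eqref{SL2} gives $\ddt\th_j=\sum_{m=1}^{N-1}a_m\sin(\phi_m)=\omega$ for every $j$. The amplitude equation \eqref{SL1} reduces to
\[
0=R(\rho-d)+\tfrac1\tau\bigl(1-R^2/\beta^2\bigr)R .
\]
This yields exactly the stated $R$, which is real and positive as long as $1+\tau(\rho-d)>0$. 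Since $\rho\le d$, this is the threshold encoded by $\tau^*$, with the sign convention read so that $\tau^*=(d-\rho)^{-1}$ is positive. In what follows I assume $\tau$ is below this threshold. In particular $R\to\beta$ as $\tau\to0$, and $R$ stays bounded away from $0$ on $\tau\le\tau^*/2$.

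\emph{Linearization.} Perturb as $r_j=R+u_j$ and $\th_j=\phi_j+\omega t+c+v_j$; this removes the time dependence. To first order,
\[
\dot v_j=\sum_{l}A_{jl}\Bigl[\cos(\phi_{l}-\phi_j)(v_l-v_j)+\tfrac{1}{R}\sin(\phi_l-\phi_j)(u_l-u_j)\Bigr],
\]
\[
\dot u_j=\sum_l A_{jl}\Bigl[\cos(\phi_l-\phi_j)u_l-u_j-R\sin(\phi_l-\phi_j)(v_l-v_j)\Bigr]+\tfrac1\tau\bigl(1-3R^2/\beta^2\bigr)u_j .
\]
Using the value of $R$, the self-term coefficient is $\tfrac1\tau(1-3R^2/\beta^2)=-\tfrac{2}{\tau}-3(\rho-d)$. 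All coefficients depend only on $l-j$, so the system is block-circulant. It is therefore diagonalized (after complexification) by the Fourier vectors $w_k$ of \eqref{circ-eigenv}, giving for each $k\in\{0,\dots,N-1\}$ a $2\times2$ matrix
\[
M_k(\tau)=\begin{pmatrix}-\tfrac{2}{\tau}+\alpha_k & \gamma_k\\ \delta_k & \mu_k\end{pmatrix},\qquad
\mu_k=\sum_{m=1}^{N-1}a_m\cos(\phi_m)\bigl(e^{ik\phi_m}-1\bigr).
\]
Here $\alpha_k,\gamma_k,\delta_k$ are explicit trigonometric sums in $a$, $R$ and $1/R$, and are bounded uniformly for $\tau\le\tau^*/2$. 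The key observation is
\[
\Re\,\mu_k=-\sum_m a_m\cos(\phi_m)\bigl(1-\cos(k\phi_m)\bigr)=-(W\bar a)_k<0\quad\text{for }k\neq0,
\]
by the cone condition \eqref{cone-condition}. This is precisely the Kuramoto spectrum of Theorem \ref{t:Kurrotpre}, which I may take as given. For $k=0$, one has $\mu_0=0$ and $\gamma_0=0$, since all the $(u_l-u_j)$ and $(v_l-v_j)$ terms vanish on constant modes. This gives the eigenvalue $0$ with eigenvector the uniform phase shift, i.e. the neutral direction coming from $c\in\S$, together with the eigenvalue $-2/\tau+O(1)$.

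\emph{Perturbation in $\tau$ (main step).} For $k\neq0$, I would write the characteristic polynomial of $M_k$. Equivalently, I would use the Schur complement: the eigenvalue near the slow entry is $\lambda=\mu_k+\gamma_k\delta_k/(\lambda+2/\tau-\alpha_k)=\mu_k+O(\tau)$, and the other eigenvalue is $-2/\tau+O(1)$. A Rouché or Gershgorin-type argument on the rescaled matrix $\tau M_k$ makes this rigorous with explicit constants. Because there are only finitely many modes and $\min_{k\ne0}(W\bar a)_k>0$, there is a $\tau^{**}\le\tau^*$ such that for $\tau\le\tau^{**}$ every eigenvalue other than the simple zero has strictly negative real part. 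This gives linear (orbital) stability of the one-parameter family.

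The main obstacle I expect is keeping the bookkeeping of the complexified Fourier modes consistent, since the real perturbation couples $k$ with $N-k$, and making the $O(\tau)$ bound uniform in $k$. The latter needs the off-diagonal terms $\gamma_k$ and $\delta_k$, which involve $R$ and $1/R$, to be controlled, which holds because $R\to\beta$.
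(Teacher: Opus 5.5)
Your proposal is correct and follows essentially the same route as the paper. Both arguments use a co-rotating linearization and reduce the block-circulant Jacobian, via the Fourier vectors, to the $2\times 2$ matrices $M_k$. In both, the $k=0$ block gives the neutral direction and the eigenvalue $-2R^2/(\tau\beta^2)$, and the cone condition gives $\Re\,\lambda_k^{\theta\theta}<0$ for the slow entry.

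The only difference is the last step.

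The paper writes the eigenvalues explicitly as $d-\rho-\frac1\tau+\lambda_k^{\theta\theta}\pm\sqrt{(d-\rho-\frac1\tau)^2+\lambda_k^{r\theta}\lambda_k^{\theta r}}$. It then applies Lemma~\ref{Lem:technical} to get an explicit sufficient bound on $1/\tau$.

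Your Schur-complement perturbation reaches the same conclusion, with a less explicit threshold. It is made rigorous by noting that every root of $(\lambda+\frac2\tau-\alpha_k)(\lambda-\mu_k)=\gamma_k\delta_k$ lies in $\{|\lambda+\frac2\tau-\alpha_k|\le\frac1\tau\}$ or in $\{|\lambda-\mu_k|\le\tau|\gamma_k\delta_k|\}$. A plain Gershgorin bound on $\tau M_k$ would not suffice without an extra diagonal rescaling.
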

Here,  $\tau^*=(\rho-d)^{-1}$ provides the phase transition for the existence of the rotational twisted state and $\tau^{**}$ provides a quantitative bound which guarantees stability, to be defined later.\\

On the other hand, it is well known that for connected matrices, the \textit{synchronized state} of the Kuramoto model is stable, while it was recently shown for the Stuart-Landau model in the complete graph case \cite{millan2025synchronizationcoupledstuartlandauoscillators}. Here we present stability of the \textit{synchronized state} for any connected graph, and further provide unconditional convergence to the synchronized state for positive connectivity matrices if the minimal connection strength is greater than the inverse of the relaxation time $\frac{1}{\tau}$, highlighting the competition between the consensus mechanism and the nonlinear forcing term. 
\addtocounter{maintheorem}{1}
\begin{maintheorem}
        Let $A\in M_N(\mathbb{R}_+)$ be a general matrix with positive entries. Let $m=\min_{j\neq k} A_{jk}>0$ and suppose $m>\frac{1}{\tau}$. Then the system \eqref{eq:mainmodel} converges to a fully synchronized state so that $$ \max_{j,k\in\indices}|z_j-z_k|(t)\leq Ce^{-(m-\frac{1}{\tau})t}.$$
\end{maintheorem}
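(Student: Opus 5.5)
We argue directly on the diameter $D(t):=\max_{j,k\in\indices}|z_j(t)-z_k(t)|$ and aim for a differential inequality
\[
D^+ D(t)\leq -\Big(m-\tfrac{1}{\tau}\Big)D(t),
\]
where $D^+$ denotes the upper Dini derivative. Gronwall's lemma then gives $D(t)\leq D(0)e^{-(m-\frac1\tau)t}$, i.e.\ the claim with $C=D(0)$. Since $D$ is a maximum of finitely many absolutely continuous functions, it is standard (Danskin-type argument) that at each time $D^+D(t)$ is bounded by the time derivative of $|z_j-z_k|$ for some pair $(j,k)$ realizing the maximum. So we fix such a pair with $D(t)>0$. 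We identify $\C\cong\R^2$ with the inner product $\langle z,w\rangle=\Re(z\bar w)$, and set $u:=(z_j-z_k)/D$. Then $\frac{\dd}{\dd t}|z_j-z_k|=\langle \dot z_j-\dot z_k,u\rangle$, and we split this into three contributions: the coupling, the linear forcing $\frac1\tau z$, and the cubic damping $-\frac{1}{\tau\beta^2}|z|^2z$.

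\textbf{Geometric step.} For a maximizing pair, every oscillator lies in the slab $\langle z_k,u\rangle\leq\langle z_l,u\rangle\leq\langle z_j,u\rangle$. Indeed, if $\langle z_l,u\rangle>\langle z_j,u\rangle$ for some $l$, then $|z_l-z_k|\geq\langle z_l-z_k,u\rangle>D$, contradicting maximality; the other inequality is symmetric. Consequently $\langle z_l-z_j,u\rangle\leq 0$ and $\langle z_l-z_k,u\rangle\geq0$ for all $l$. Since $A_{jl},A_{kl}\geq m$ for off-diagonal entries, we may replace each coefficient by $m$ in the right direction:
\[
\sum_{l\neq j}A_{jl}\langle z_l-z_j,u\rangle-\sum_{l\neq k}A_{kl}\langle z_l-z_k,u\rangle\leq m\sum_{l\neq j}\langle z_l-z_j,u\rangle-m\sum_{l\neq k}\langle z_l-z_k,u\rangle .
\]
The terms with $l\notin\{j,k\}$ give $-mD$ each, and the two terms $l=k$ (first sum) and $l=j$ (second sum) give $-mD$ each. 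Hence the coupling contributes at most $-mND\leq -mD$. This is actually a stronger rate than needed; the stated rate $m-\frac1\tau$ follows a fortiori.

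\textbf{Nonlinear step.} The linear forcing contributes exactly $\frac1\tau\langle z_j-z_k,u\rangle=\frac{D}{\tau}$. For the cubic term we use monotonicity of $F(z)=|z|^2z$, namely $\langle F(z)-F(w),z-w\rangle\geq0$. To check it, write
\[
\langle F(z)-F(w),z-w\rangle=|z|^4+|w|^4-(|z|^2+|w|^2)\langle z,w\rangle\geq |z|^4+|w|^4-(|z|^2+|w|^2)|z||w|=(|z|^3-|w|^3)(|z|-|w|)\geq0 .
\]
So the damping term contributes $-\frac{1}{\tau\beta^2D}\langle F(z_j)-F(z_k),z_j-z_k\rangle\leq0$. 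Summing the three contributions yields $D^+D\leq -(mN-\frac1\tau)D\leq-(m-\frac1\tau)D$, and Gronwall concludes.

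The main obstacle I expect is the slab/extremality step combined with the nonsmoothness of $D$. Once one observes that a maximal-distance pair forces all projections onto $u$ into $[\langle z_k,u\rangle,\langle z_j,u\rangle]$, the sign information lets the positive lower bound $m$ on the entries act uniformly despite the lack of symmetry of $A$. The cubic term is harmless thanks to monotonicity, so the only destabilizing effect is the linear $\frac1\tau$ growth, which is beaten precisely when $m>\frac1\tau$.
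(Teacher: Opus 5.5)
Your proof is correct and follows essentially the paper's argument. The paper likewise differentiates the diameter along a maximizing pair and bounds the coupling contribution by $-m$ times the diameter. It then controls the nonlinearity by a mean-value argument on $G(w)=w(\beta^2-|w|^2)$, discarding the negative semidefinite part $-|w|^2\mathrm{Id}-2w\otimes w$ of $D_wG(w)$, which is precisely your monotonicity of $|z|^2z$.

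Your slab argument makes explicit the coupling estimate that the paper only asserts. It in fact gives $-mND$, so your computation yields decay at rate $mN-\frac{1}{\tau}$ under the weaker hypothesis $mN>\frac{1}{\tau}$.
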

This series of results can be encapsulated in the following graphic:
\begin{figure}[h!]
    \centering
    \includegraphics[width=0.8\textwidth]{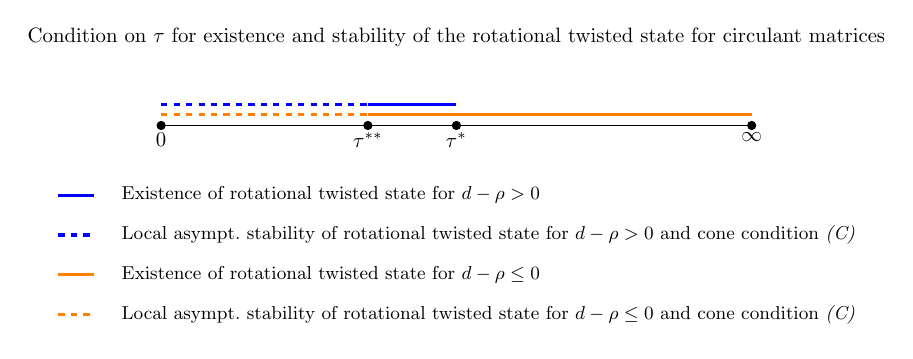}
    \centering
    \includegraphics[width=0.8\textwidth]{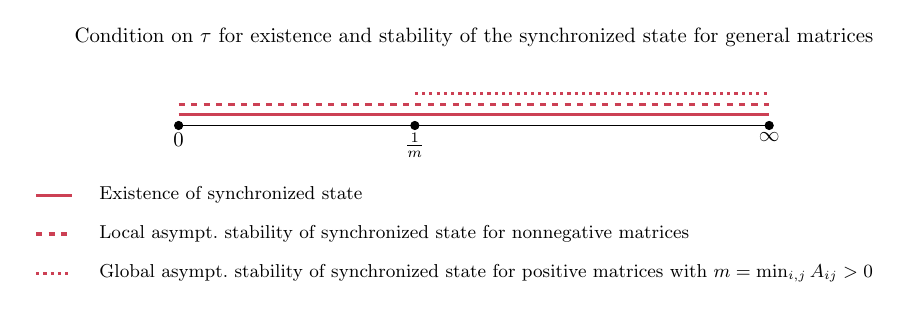}
\end{figure}
Note that the local and global asymptotic stability carries over to the case of generic connected matrices, while the circulant structure is critical for the existence of the \textit{rotational twisted states} as defined in Definition \ref{RTS}.

Finally we study both the Kuramoto and Stuart-Landau systems in the context of leader-follower dynamics, where the leaders are coupled by a circulant structure, and the followers try to adopt the rotation dynamics given by the leaders. So as not to introduce too much notation here, we will hold off on stating the main theorems explicitly here, but note that in both cases we prove the existence of a phase transition for which above the threshold, the followers adopt the rotation speed, while below threshold, they fail to do so and the dynamics are given by a so-called chimera state.

\section{Kuramoto Rotational Twisted States}\label{s:3}
Consider the Kuramoto model with identical natural frequencies, $\Omega_j\equiv 0$ for all $j=1,...,N$, coupled via a directed graph so that the adjacency matrix $A$ is asymmetric. 
\begin{align}\tag{K}\label{Kuramoto}
    \ddt \th_j=\sum_{l=1}^NA_{jl}\sin(\th_l-\th_j), \quad j\in\indices.
\end{align}
It is well known that the synchronized state such that all $\th_j\equiv \th(t)$ with $\ddt \th=0$ is an exponentially stable fixed point for any connected graph $A$, indeed, by linearizing around the fixed point $\th_j=\th_k$ for all $j,k$, the dynamics are actually provided by the Abelson model \eqref{Abelson} and hence for any adjacency matrix containing a directed spanning tree, the synchronized state is stable. However, there have been a wealth of studies to find other types of fixed points for such a system, however most studies still retain the simplifying assumption that $A$ be a symmetric matrix. We present here a new class of stable rotational twisted states which rotate at a new common speed induced by the graph structure.\\

\subsection{Existence of the rotational twisted states}
We begin by showing the existence of the rotational twisted state defined in Theorem \ref{t:Kurrotpre}.

\begin{Prop}
    Let $A$ be a circulant matrix generated by a vector $a=(a_j)_{j\in\{0,\cdots,N-1\}}\in\R^N$. Then, for any $c\in\S$ and any $q\in\Z$, the 
    state defined by  \begin{align*}
        \th_j(t)=\frac{2\pi j q}{N}+\omega_q t +c, \quad j\in\indices,
    \end{align*}
    where $\displaystyle \omega_q := \sum_{l=1}^{N-1}a_l\sin(q\phi_l)$,
    is a solution to the Kuramoto model \eqref{Kuramoto}.\\
    Note that the case $q=0$ corresponds to a synchronized state (with angular velocity $\omega_q=0$), while the case $q\neq 0$ defines a $q$-rotational twisted state.
\end{Prop}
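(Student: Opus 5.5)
The proof is a direct substitution of the ansatz into \eqref{Kuramoto}; the only work is reindexing the sum by the circulant structure. Fix $q\in\Z$ and $c\in\S$, and set $\th_j(t)=q\phi_j+\omega_q t+c$, where $\phi_j=\frac{2\pi j}{N}$, so that $\frac{2\pi jq}{N}=q\phi_j$. The left-hand side is immediate: $\ddt\th_j=\omega_q$ for every $j\in\indices$. It therefore suffices to show that the right-hand side $\sum_{l=1}^N A_{jl}\sin(\th_l-\th_j)$ equals $\omega_q$ for every $j$, independently of $j$ and $t$.

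For the right-hand side, the phase differences are $\th_l-\th_j=q(\phi_l-\phi_j)=\frac{2\pi q(l-j)}{N}$, in which the time-dependent terms and $c$ cancel. Since $q\in\Z$, this quantity depends on $l-j$ only modulo $N$: writing $m:=(l-j)[N]\in\{0,\cdots,N-1\}$, we have $\sin(\th_l-\th_j)=\sin(q\phi_m)$, with the convention $\phi_0=0$, which is consistent with $\phi_N\equiv 0$ modulo $2\pi$. By Definition \ref{circulant}, $A_{jl}=a_{(l-j)[N]}=a_m$.

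The key step is the reindexing. For fixed $j$, the map $l\mapsto (l-j)[N]$ is a bijection from $\indices$ onto $\{0,\cdots,N-1\}$. Hence
\[
\sum_{l=1}^N A_{jl}\sin(\th_l-\th_j)=\sum_{m=0}^{N-1}a_m\sin(q\phi_m)=\sum_{m=1}^{N-1}a_m\sin(q\phi_m)=\omega_q,
\]
where the $m=0$ term drops because $\sin 0=0$. This holds for every $j$ and every $t$, so the ansatz solves \eqref{Kuramoto}.

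The only point needing care is this bijection together with the integrality of $q$, which makes the phase difference well defined modulo $2\pi$; nothing else is an obstacle. For the final remark in the statement, take $q=0$: then $\omega_0=0$ and all phases coincide, which is a synchronized state in the sense of Definition \ref{sync}. For $q\neq 0$ the solution is, by definition, a $q$-rotational twisted state as in Definition \ref{RTS}.
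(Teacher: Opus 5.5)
Your proof is correct and takes essentially the same approach as the paper: substitute the ansatz, note that $\th_l-\th_j=\frac{2\pi q(l-j)}{N}$, and use $A_{jl}=a_{(l-j)[N]}$ to reindex the sum into $\sum_{l=1}^{N-1}a_l\sin(q\phi_l)=\omega_q$, which is independent of $j$. Your explicit use of the bijection $l\mapsto(l-j)[N]$ and of the integrality of $q$ makes the paper's brief ``change of variables'' step precise, and it covers all $q\in\mathbb{Z}$, whereas the paper's proof writes only $q\in\N$.
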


\begin{proof}
Let $A$ be a circulant matrix. Then we can rewrite \eqref{Kuramoto} as 
\begin{align*}\label{Kurcirc}
    \ddt \th_j=\sum_{l=1}^{N}a_{(l-j)[N]}\sin(\th_l-\th_j).
\end{align*}
Now suppose that there is a rotational fixed point such that for all $j\in\indices$, $\dot{\th}_j=\omega$ for some $\omega$ to be determined. We define by 
\begin{equation}\label{eq:Th_j}
\Th_j:= 
\begin{cases}
(\th_{j+1}-\th_j) \ \mod 2\pi \text{ if } j\in\{1,\cdots,N-1\}\\
(\th_1-\th_N) \ \mod 2\pi \text{ for } j = N
\end{cases}
\end{equation}
the angle differences between two subsequent oscillators, and suppose that they satisfy
\begin{align*}
   \Th_j(t)=\frac{2\pi q}{N} \ \mod 2\pi,
\end{align*} 
for some $q\in\N$ and all $j\in\indices$.
Then,
\begin{align*}
    \ddt \th_j=\sum_{l=1}^{N}a_{(l-j)[N]}\sin\left(\frac{2\pi(l-j)q}{N}\right).
\end{align*}
A change of variables leads to noticing that the right-hand side does not depend on $j$, so that we obtain a common angular velocity for all agents $j\in\indices$ given by:

\begin{align*}
    \omega_q=\sum_{l=1}^{N-1}a_l\sin\left(\frac{2\pi l q }{N}\right)=\sum_{l=1}^{N-1} a_l\sin(q \phi_l),
\end{align*}

where $(\phi_l)_{l\in\{0,\ldots, N-1\}}$ are the angles corresponding to the $N$-th roots of unity, with $\phi_l=\frac{2\pi l}{N}$.
\end{proof}

Now in order to prove linear stability of such states, we need a fixed point map. Indeed, as $\omega_q$ is determined precisely by the known graph structure, we can uniformly shift the natural frequencies $\Omega_j=0$ by the emergent $\omega_q$ to put us in the rotational reference frame for the rotational twisted state. Thus the equations for which we analyze the linear stability are given by
\begin{align}\label{RRF}
    \ddt \th_j=-\omega_q+\sum_{l=1}^NA_{jl}\sin(\th_l-\th_j).
\end{align}

Then the solution $\th_j=\frac{2\pi jq}{N}+c$ is a fixed point of \eqref{RRF}, while the synchronized state would have rotations of speed $\omega_q$.

\subsection{Linear Stability}\label{sec:KurStability}
As seen in the previous section, the solution $\th_j=\frac{2\pi q}{N}+c$ for $q \in \mathbb{Z}$ is a fixed point of the above system \eqref{RRF}. The case $q=0$ provides the synchronized state, while we focus here on the case $q=1$ and note what happens for $q\in\Z\setminus\{0,1\}$ in Section \ref{ss:3}.
The aim of this section is to prove Theorem \ref{t:Kurrotpre}. 

In order to study the stability, we compute the Jacobian of the fixed-point map. Let $F:\S^N\to\R^N$ be defined by 
\begin{align*}\label{F-kurmap}
    F(\th)_j&=-\omega+\sum_{l=1}^NA_{jl}\sin(\th_l-\th_j)=-\o+\sum_{l=1}^{N}a_{(l-j)[N]}\sin(\th_l-\th_j),
\end{align*}
so that 
\[
\ddt\th_j = F(\th)_j.
\]

Now computing the Jacobian gives

\begin{align*}
    \partial_{\th_j}F(\th)_j&=-\sum_{l=1}^Na_{(l-j)[N]}\cos(\th_l-\th_j)
\end{align*}
Plugging in the fixed point $\th_j\equiv \frac{2\pi}{N}+c$ gives
\begin{align}\label{eq:JacKura1}
    \partial_{\th_j}F(\th)_j&=-\sum_{l=1}^{N}a_l\cos\left(\phi_l\right):=-\rho,
\end{align}
 The off diagonal elements are then given by
\begin{align}\label{eq:JacKura2}
    \partial_{\th_{(j+n)\langle N \rangle}}F(\th)_j&=a_{(n)[N]}\cos(\th_{j+n}-\th_j)=a_{(n)[N]}\cos(\phi_n)
\end{align}

Therefore the Jacobian at this fixed point is also represented by a circulant matrix generated by the vector $(d_0,...,d_{N-1})$:
\begin{align*}
    d_0&=-\sum_{l=1}^{N-1}a_l\cos\left(\phi_l\right):=-\rho,\\
    d_j&=a_{j}\cos\left(\phi_j\right), \ \ j=1,\ldots, N-1
\end{align*}
We can then take advantage of the diagonalization formula for circulant matrices yielding the following eigenvalues,
\begin{align*}\label{1kur:eigenvalues}
    \l_k=d_0+d_1e^{ik\phi_1}+d_2e^{ik\phi_{2}}+...+d_{N-1}e^{ik\phi_{N-1}}, \ \ \ k=0,...,N-1.
\end{align*}

We see that $\lambda_0=0$, as to be expected from the rotational invariance of the system and for all other eigenvalues we have to verify that
\begin{align*}
    \Re\lambda_k=\Re\left(d_0+d_1e^{ik\phi_1}+d_2e^{ik\phi_{2}}+...+d_{N-1}e^{ik\phi_{N-1}}\right)<0,
\end{align*}
to obtain the linear stability of the rotational state. We immediately get a lemma consistent with the theory for stationary twisted states.

\begin{Lemma}
Let $A$ be a circulant matrix. 
If $N\leq 4$, then the \textit{rotational twisted state} defined by \eqref{eq:RotationalStateKuramoto}
are not linearly stable.
\end{Lemma}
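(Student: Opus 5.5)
The plan is to compute the real parts of the Jacobian eigenvalues explicitly and show that for $N\le 4$ at least one nontrivial eigenvalue $\lambda_k$, $k\in\{1,\dots,N-1\}$, has $\Re\lambda_k\ge 0$. Since the real part involves only cosines, combining the formulas for $d_0,\dots,d_{N-1}$ with the eigenvalue formula gives
\begin{align*}
\Re\lambda_k=-\sum_{j=1}^{N-1}a_j\cos(\phi_j)+\sum_{j=1}^{N-1}a_j\cos(\phi_j)\cos(k\phi_j)=-\sum_{j=1}^{N-1}a_j\cos(\phi_j)\bigl(1-\cos(k\phi_j)\bigr)=-(W\bar a)_k .
\end{align*}
This identity also explains the cone condition of Definition \ref{def:cone}. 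Linear stability transverse to the rotation direction (recall $\lambda_0=0$) is therefore equivalent to $(W\bar a)_k>0$ for all $k\in\{1,\dots,N-1\}$. It remains to check, for each small $N$, that this fails.

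I will argue case by case, assuming $A$ has nonnegative entries, as for the interaction graphs considered in the paper.

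\emph{Case $N=1$.} There is only the eigenvalue $\lambda_0=0$, so no transverse direction is contracted, and the state is degenerate.

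\emph{Case $N=2$.} Here $\phi_1=\pi$, so $\Re\lambda_1=-a_1(-1)(1-(-1))=2a_1\ge 0$.

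\emph{Case $N=3$.} Here $\cos\phi_1=\cos\phi_2=-\tfrac12$ and $1-\cos(\phi_j)=\tfrac32$, giving $\Re\lambda_1=\tfrac34(a_1+a_2)\ge 0$.

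\emph{Case $N=4$.} Here $\cos\phi_1=\cos\phi_3=0$ and $\cos\phi_2=-1$, so $\Re\lambda_k=a_2\bigl(1-\cos(k\pi)\bigr)$. This gives $\Re\lambda_1=\Re\lambda_3=2a_2\ge0$ and $\Re\lambda_2=0$. The vanishing of $\Re\lambda_2$ holds for every real vector $a$, since $k\phi_j\in\pi\Z$ makes $1-\cos(2\phi_j)=0$ whenever $\cos\phi_j\neq0$. So for $N=4$ there is always a nontrivial neutral direction.

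In each case some $k\neq0$ has $\Re\lambda_k\ge0$, so the linearization does not yield linear (asymptotic) stability.

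The main obstacle is the sign convention for $N=2,3$. If negative weights were allowed, antiphase or $3$-twisted states with $a_1+a_2<0$ would in fact be stable. I will therefore make the nonnegativity of $A$ explicit; it is implicit in the graph interpretation of $A$ and in the hypotheses of Theorem \ref{t:SLcirc}. An alternative uniform route is the trace identity $\sum_{k}\Re\lambda_k=Nd_0=-N\rho$: for $N\le 4$ one has $\cos\phi_j\le0$ for all $j$, hence $\rho\le 0$, so the real parts cannot all be negative. This route also needs nonnegativity. The $N=4$ case is the cleanest, because the neutral mode $k=2$ exists unconditionally.
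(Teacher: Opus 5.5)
Your argument is correct under the nonnegativity assumption you make explicit, and it is sharper than the paper's.

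\textbf{How the paper argues.} The paper treats only $N=3$ and $N=4$. For $N=3$ it notes that the diagonal entry $d_0=\tfrac12(a_1+a_2)$ is positive, so the trace $3d_0$ rules out all nontrivial eigenvalues having negative real part. This is exactly your ``alternative uniform route.'' For $N=4$ it asserts that all $d_j$ vanish and defers to a nonlinear analysis.

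\textbf{What your computation adds for $N=4$.} Evaluating $\Re\lambda_k=-(W\bar a)_k$ explicitly shows that the paper's claim holds only when $a_2=0$. In general $d_0=a_2$ and $d_2=-a_2$, so $\lambda_0=\lambda_2=0$ and $\lambda_1=\lambda_3=2a_2$. Hence for $a_2>0$ the state is actually linearly unstable, not merely degenerate. Moreover, the neutral mode $k=2$ exists for every real $a$. That is the robust reason the $N=4$ case fails, and it needs no sign hypothesis.

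\textbf{Your sign caveat is justified.} The paper's argument needs it as well: $d_0>0$ silently uses $a_1+a_2>0$. For $N=3$ with $a_1=a_2=-1$ one gets $\Re\lambda_1=\Re\lambda_2=-\tfrac32$, a linearly stable twisted state. This is consistent with Corollary~\ref{Cor:Example1}, which states no restriction on $N$. So for signed circulant matrices, which the paper explicitly allows elsewhere, the lemma genuinely fails for $N\in\{2,3\}$. Your explicit case $N=2$, which the paper's proof skips, exhibits the same dichotomy.

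\textbf{One loose end: $N=1$.} By your own criterion, the condition over $k\in\{1,\dots,N-1\}$ is vacuously satisfied when $N=1$. So calling the state ``degenerate'' does not establish non-stability. You should simply exclude $N=1$, as the paper implicitly does.
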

\begin{proof}
    For $N=3$ we see that $d_0=-a_1\cos(2\pi/3)-a_2\cos(4\pi/3)>0$ and hence $D_{\theta}F(\theta)$ cannot have all eigenvalues  (other than $\lambda_0=0$) with strictly negative real part.
    
    For $N=4$, we see that the Jacobian becomes degenerate as all $d_j=0$ and a study of the nonlinear stability as done in \cite{Sclosa_2024} is necessary.
\end{proof}

Finally, the cone condition Definition \ref{def:cone} provides us with the last ingredient for concluding a necessary and sufficient condition for the \textit{rotational twisted state} to be stable. We conclude with the proof of Theorem \ref{t:Kurrotpre}, which we state again for clarity of presentation.

\begin{Theorem}
    Let $N\geq 5$. Let $A$ be a circulant matrix satisfying the cone condition of Definition \ref{def:cone}. Then for any $c\in\S$, the \textit{rotational twisted state} defined by
    \begin{align*}
        \th_j(t)=\phi_j+\omega t +c, \quad j\in\indices,
    \end{align*}
    where $\displaystyle \omega = \sum_{l=1}^{N-1}a_l\sin(\phi_l)$ and $\displaystyle  \phi_j=\frac{2\pi j}{N}$,
    is linearly stable for the Kuramoto model \eqref{eq:Kur}.
\end{Theorem}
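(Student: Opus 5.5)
Working in the rotating frame \eqref{RRF} with $q=1$, the state \eqref{eq:RotationalStateKuramoto} becomes the fixed point $\th_j=\phi_j+c$. Its Jacobian was computed in \eqref{eq:JacKura1}--\eqref{eq:JacKura2}: it is the circulant matrix generated by $d_0=-\rho$ and $d_j=a_j\cos(\phi_j)$ for $j\in\{1,\cdots,N-1\}$. By the diagonalization formulas \eqref{circ-eigenv}--\eqref{circ-eigenvalues}, its spectrum is
\[
\l_k=\sum_{j=0}^{N-1} d_j e^{ik\phi_j}, \qquad k\in\{0,\cdots,N-1\}.
\]
Here $\l_0=0$ with eigenvector $w_0=(1,\dots,1)^T/\sqrt N$, which corresponds to the rotational invariance $c\mapsto c+\text{const}$. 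Linear stability therefore means $\Re\l_k<0$ for all $k\neq0$, with the neutral direction being the family of shifted states.

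The key step is to take real parts. Since $d_0=-\sum_j a_j\cos\phi_j$, we get
\[
\Re\l_k=-\sum_{j=1}^{N-1}a_j\cos(\phi_j)+\sum_{j=1}^{N-1}a_j\cos(\phi_j)\cos(k\phi_j)=-\sum_{j=1}^{N-1}\cos(\phi_j)\bigl(1-\cos(k\phi_j)\bigr)a_j=-(W\bar a)_k .
\]
This holds for every $k\in\{1,\cdots,N-1\}$, with $W$ exactly the matrix of Definition \ref{def:cone}. The cone condition \eqref{cone-condition} thus gives $\Re\l_k<0$ for all $k\neq0$. The condition is also necessary, which matches the ``sharp'' claim in the paper.

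It remains to phrase the conclusion properly. The orthonormal Fourier basis $\{w_k\}$ diagonalizes the Jacobian, so any perturbation decomposes as a component along $w_0$ plus components decaying like $e^{\Re\l_k t}$. The linearized solution therefore converges exponentially to a shifted rotational twisted state. This is linear (neutral-orbital) stability of the one-parameter family indexed by $c\in\S$.

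The main point needing care is only the bookkeeping: the diagonal entry includes $l=N$, where $a_0\cos 0$ cancels against the diagonal of $A$. The hypothesis $N\ge5$ matters only because for $N\le4$ the cone is empty, consistent with the preceding Lemma. No genuine obstacle remains once the identity $\Re\l_k=-(W\bar a)_k$ is established.
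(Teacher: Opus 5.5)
Your proposal is correct and follows the paper's proof: the rotating-frame Jacobian is circulant with generating row $(-\rho, a_1\cos\phi_1,\dots,a_{N-1}\cos\phi_{N-1})$, and the real parts of its Fourier eigenvalues are $\Re\lambda_k=-(W\bar a)_k$, so the cone condition is exactly $\Re\lambda_k<0$ for $k\neq 0$. One aside is inaccurate but harmless: $\mathcal{C}_N$ is empty for $N=4$ (since $(W\bar a)_2=0$) but not for $N=3$ once repulsive couplings are allowed (e.g.\ $a_1=a_2=-1$ gives $W\bar a=(3/2,3/2)$), so the hypothesis $N\ge 5$ is never used in the argument, rather than being forced by an empty cone.
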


\begin{proof}
     Expanding $\mathrm{Re} \lambda_k$ for each $k=1,\ldots, N-1$ yields
    \begin{align*}
        \mathrm{Re} \lambda_k=-\sum_{l=1}^{N-1} a_l\cos(\phi_l)(1-\cos(k\phi_l)),
    \end{align*}
    The negativity of the above is equivalent to $\bar{a} \in \mathcal{C}_N$.
\end{proof}
Of particular interest in the above theorem is that there is no explicit sign condition on the values of the adjacency matrix. Therefore this criterion covers the case of all circulant matrices, even those with repulsive interconnections. In particular, there are convenient subsets of the cone $\mathcal{C}_N$ which can be explicitly stated.
\begin{Cor}\label{Cor:Example1}
Define the sets
    \begin{align*}
        \mathcal{P}:=\{l : \cos(\phi_l)>0\}, \ \ \ \mathcal{N}:=\{l : \cos(\phi_l)\leq 0 \}.
    \end{align*}
    Let $A$ be a circulant matrix generated by $(a_0,\ldots a_{N-1})$, such that if $l \in \mathcal{P}$, $a_l\geq 0$ and if $l \in \mathcal{N}$, $a_l\leq 0$, and such that there exists $j> 0$ such that $a_j\neq 0$ (excluding the case $\cos(\phi_j)=0$). Then the rotational twisted state is a linearly stable solution to the Kuramoto model.
\end{Cor}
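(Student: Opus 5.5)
The plan is to verify the cone condition \eqref{cone-condition} of Definition \ref{def:cone} directly, and then invoke Theorem \ref{t:Kurrotpre} (as restated above). By the computation in that proof, for each $k\in\{1,\cdots,N-1\}$,
\begin{align*}
(W\bar a)_k=\sum_{l=1}^{N-1} a_l\cos(\phi_l)\bigl(1-\cos(k\phi_l)\bigr)=-\Re\lambda_k .
\end{align*}
So it suffices to show that every such sum is strictly positive.

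\textbf{Step 1 (nonnegativity, termwise).} Take $l\in\{1,\cdots,N-1\}$.
\begin{itemize}
\item If $l\in\mathcal P$, then $a_l\ge 0$ and $\cos\phi_l>0$, so $a_l\cos\phi_l\ge0$.
\item If $l\in\mathcal N$, then $a_l\le 0$ and $\cos\phi_l\le 0$, so again $a_l\cos\phi_l\ge 0$.
\end{itemize}
Since also $1-\cos(k\phi_l)\ge 0$, every term of $(W\bar a)_k$ is nonnegative, and hence $(W\bar a)_k\ge 0$ for all $k$.

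\textbf{Step 2 (strictness).} It remains to find, for each $k$, one index $l$ with a strictly positive term. This requires two things.
\begin{itemize}
\item $a_l\cos\phi_l\neq 0$. The hypothesis supplies an index $j$ with $a_j\neq0$ and $\cos\phi_j\neq0$; by the sign rules, $a_j\cos\phi_j>0$.
\item $\cos(k\phi_j)\neq1$, i.e. $N\nmid kj$.
\end{itemize}

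\textbf{Main obstacle.} The second requirement can fail for a single fixed $j$. For example, take $N=6$, $j=3$ and $k=2$. Then the hypothesis as literally stated does not by itself give strict positivity, and the term-by-term argument breaks down for $k$ a multiple of $N/\gcd(j,N)$.

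I would resolve this in one of two ways.
\begin{itemize}
\item The clean route is to use an index $j$ coprime to $N$ (e.g. $j=1$). Then $N\mid kj$ forces $N\mid k$, which is impossible for $1\le k\le N-1$, so the $j$-th term is strictly positive for every $k$.
\item More generally, it suffices that the set $S=\{l: a_l\cos\phi_l>0\}$ satisfies $\gcd(S\cup\{N\})=1$. Indeed, if $N\mid kl$ for all $l\in S$, then $N\mid k\cdot\gcd(S\cup\{N\})=k$, a contradiction.
\end{itemize}
This is the same connectivity obstruction noted in the Remark following Definition \ref{def:cone}, where sub-cycles violate the cone condition. So I would state the corollary with this nondegeneracy (strong connectivity of the subgraph carrying the nonzero terms) made explicit.

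\textbf{Conclusion.} With strictness established, $\bar a\in\mathcal C_N$. Theorem \ref{t:Kurrotpre} then yields the linear stability of the rotational twisted state for $N\ge5$.
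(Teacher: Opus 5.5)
Your route is the same as the paper's. Its entire proof is the claim that it is ``trivial to check'' that $\bar a\in\mathcal{C}_N$, which amounts to your Step~1 termwise sign argument. You are right that this overlooks strictness, so the corollary as literally stated is false.

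Your $N=6$ example is a genuine counterexample, not just a failure of the method. With only $a_3<0$ nonzero, one gets $\lambda_k=a_3\bigl(1-(-1)^k\bigr)$, hence $\lambda_2=\lambda_4=0$; the graph is three decoupled antipodal pairs. A genuinely rotating instance is $N=10$ with only $a_2>0$. There $\omega=a_2\sin(2\pi/5)\neq0$, but $\lambda_5=0$, because the graph splits into two $5$-cycles whose relative phase is neutral.

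Your repaired hypothesis is also sharp. If $g:=\gcd(S\cup\{N\})>1$, take $k=N/g$. Then $e^{ik\phi_l}=1$ for all $l\in S$, so $\lambda_k=-\rho+\sum_{l\in S}a_l\cos(\phi_l)=0$. Hence, under the sign constraints, the cone condition holds exactly when $\gcd(S\cup\{N\})=1$. Equivalently, the circulant graph carried by $S$ must be strongly connected, which is consistent with the Remark after Definition~\ref{def:cone}. With that hypothesis added (for instance $a_1>0$ when $N\ge5$), your proof is complete.
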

\begin{proof}
    It is trivial to check that such a vector satisfies $\bar{a}=(a_1,\ldots a_{N-1}) \in \mathcal{C}_N$.
\end{proof}
The above condition provides that each oscillator communicates positively with the other oscillators that asymptotically should be in the same half-plane as themselves, and communicates repulsively with those on the other side of the half-plane. This prevents cross communication from breaking down the stability of the rotational state. However, we can allow cooperative cross communication as long as the communication with closer agents is emphasized.
\begin{Cor}\label{Exabs}
    Let $A$ be a circulant matrix defined by $(a_0,\ldots a_{N-1})$, with the sets $\mathcal{P}$ and $\mathcal{N}$ defined as in  Corollary \ref{Cor:Example1}.
    Then if the following condition holds
    \begin{align*}
        \min_{1\leq k\leq N-1} \sum_{l \in \mathcal{P}}a_l(1-\cos(k\phi_l))>\max_{1\leq k\leq N-1} \sum_{l \in \mathcal{N}}a_l(1-\cos(k\phi_l)),
    \end{align*}
    the rotational twisted state is linearly stable.
\end{Cor}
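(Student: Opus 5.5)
By the preceding Theorem, linear stability of the rotational twisted state is equivalent to the cone condition $\bar a\in\mathcal{C}_N$. So the plan is to show that the hypothesis of the corollary forces
\[
(W\bar a)_k=\sum_{l=1}^{N-1}a_l\cos(\phi_l)\bigl(1-\cos(k\phi_l)\bigr)>0\qquad\text{for every }k\in\{1,\cdots,N-1\}.
\]
Equivalently, it forces $\Re\lambda_k<0$ for the eigenvalues of the circulant Jacobian computed from \eqref{eq:JacKura1}--\eqref{eq:JacKura2}.

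\emph{Step 1: split by sign of the cosine.} I would fix $k$ and split the sum over $l\in\{1,\cdots,N-1\}$ according to the partition $\mathcal{P}\cup\mathcal{N}$ of Corollary \ref{Cor:Example1}. This gives
\[
(W\bar a)_k=\sum_{l\in\mathcal{P}}a_l\cos(\phi_l)\bigl(1-\cos(k\phi_l)\bigr)-\sum_{l\in\mathcal{N}}a_l|\cos(\phi_l)|\bigl(1-\cos(k\phi_l)\bigr).
\]
Since $1-\cos(k\phi_l)\ge 0$ always, each summand has a definite sign once the sign of $a_l$ is known. The intended comparison is then the following. The $\mathcal{P}$-part is at least $\min_k\sum_{\mathcal{P}}a_l(1-\cos(k\phi_l))$. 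The $\mathcal{N}$-part is at most $\max_k\sum_{\mathcal{N}}a_l(1-\cos(k\phi_l))$. Passing from the $k$-dependent sums to the min/max in the hypothesis is immediate, since the hypothesis is uniform in $k$.

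\emph{Step 2: handle the cosine weights.} This is the main obstacle. The hypothesis compares \emph{unweighted} sums, whereas $(W\bar a)_k$ carries the weights $\cos(\phi_l)\in(0,1]$ on $\mathcal{P}$ and $|\cos(\phi_l)|\in[0,1]$ on $\mathcal{N}$.

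For the $\mathcal{N}$-part this is harmless when $a_l\ge0$ there, because $|\cos(\phi_l)|\le1$ only shrinks the subtracted term. If some $a_l<0$ on $\mathcal{N}$, those terms contribute positively and can be discarded.

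For the $\mathcal{P}$-part the weight $\cos(\phi_l)$ can be small, namely for $l$ near $N/4$. My first attempt would be to exploit the symmetry $l\leftrightarrow N-l$, which preserves both $\cos(\phi_l)$ and $1-\cos(k\phi_l)$. I would pair each $l\in\mathcal{P}$ with the indices of $\mathcal{N}$ having comparable $|\cos|$. The aim is a termwise comparison of the form
\[
a_l\cos(\phi_l)(1-\cos k\phi_l)\ \ge\ a_{l'}|\cos(\phi_{l'})|(1-\cos k\phi_{l'}),
\]
summed appropriately.

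I expect this step to be delicate. If the pairing does not close, I would test the claim numerically on a small example, such as $N=8$ with mass on $l=1$ versus $l=3$. This would determine whether the statement as worded needs the implicit normalization, namely whether the sums should be read with the cosine weights already absorbed.

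\emph{Step 3: conclude.} Once $(W\bar a)_k>0$ holds for each $k$, the Theorem gives $\Re\lambda_k<0$ for $k\neq0$, with $\lambda_0=0$ coming from rotational invariance. This yields linear stability of the rotational twisted state.
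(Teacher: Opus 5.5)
The obstacle you flag in Step 2 is fatal, not just delicate: no pairing can close it, because the corollary as worded is false. (The paper's proof is a one-line claim that the check is trivial, and it overlooks exactly the cosine weights you noticed.)

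Take $N=5$ and $a=(0,3,1,0,0)$. Then $\mathcal{P}=\{1,4\}$, $\mathcal{N}=\{2,3\}$, $\cos\phi_1=\frac{\sqrt5-1}{4}$ and $\cos\phi_2=-\frac{\sqrt5+1}{4}$. The left-hand side of the hypothesis is $3(1-\cos\phi_1)=\frac{3(5-\sqrt5)}{4}\approx 2.07$. The right-hand side is $1-\cos\phi_2=\frac{5+\sqrt5}{4}\approx 1.81$. So the hypothesis holds, yet
\[
(W\bar a)_1=3\cos\phi_1\bigl(1-\cos\phi_1\bigr)+\cos\phi_2\bigl(1-\cos\phi_2\bigr)=\frac{3\sqrt5-10}{4}<0 .
\]
Hence $\Re\lambda_1>0$ and the rotational twisted state is unstable. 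The same happens for $N=8$ with $a_1=8$, $a_4=1$: the hypothesis reads $8(1-\frac{\sqrt2}{2})\approx 2.34>2$, while $(W\bar a)_1=8(\frac{\sqrt2}{2}-\frac12)-2\approx -0.34$.

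The reason is structural. The $\mathcal{N}$-weights reach $|\cos\phi_l|\ge\cos(\pi/N)$ at $l=\lfloor N/2\rfloor$. This is strictly larger than every $\mathcal{P}$-weight, since $\cos\phi_l\le\cos(2\pi/N)$ on $\mathcal{P}$. So for every $N\ge5$, suitably weighted mass on $l=1$ and $l=\lfloor N/2\rfloor$ gives a counterexample. The symmetry $l\leftrightarrow N-l$ preserves $\mathcal{P}$ and $\mathcal{N}$ separately, so it cannot supply the cross-pairing you want.

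Your proposed test ($N=8$, $l=1$ versus $l=3$) would not have exposed this. There $|\cos\phi_1|=|\cos\phi_3|$, so the common weight factors out and the hypothesis really does imply the cone condition. Your remark that negative $a_l$ on $\mathcal{N}$ can be discarded is also invalid, because the hypothesis bounds the signed sum. A negative $a_l$ at an index with $\cos\phi_l=0$ (possible when $4\mid N$) lowers the right-hand side of the hypothesis while leaving every $(W\bar a)_k$ unchanged.

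What is true, and what your Step 1 split proves verbatim with no sign assumptions, is the weighted statement: if
\[
\min_{1\le k\le N-1}\sum_{l\in\mathcal{P}}a_l\cos(\phi_l)\bigl(1-\cos(k\phi_l)\bigr)>\max_{1\le k\le N-1}\sum_{l\in\mathcal{N}}a_l|\cos(\phi_l)|\bigl(1-\cos(k\phi_l)\bigr),
\]
then $(W\bar a)_k>0$ for every $k$, i.e. $\bar a\in\mathcal{C}_N$.

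If you want an unweighted criterion for nonnegative $a$, multiply the left-hand side of the original hypothesis by $c_*:=\min_{l\in\mathcal{P}}\cos\phi_l>0$. Then the weighted $\mathcal{P}$-part is at least $c_*$ times the unweighted one, and the weighted $\mathcal{N}$-part is at most the unweighted one, so the argument closes.
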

\begin{proof}
    It is trivial to check that such a vector satisfies $\bar{a} =(a_1,\ldots a_{N-1})\in \mathcal{C}_N$.
\end{proof}
This allows for even a graph structure with $A_{jk}>0$, for all $j,k\in\indices$ to have a stable attractor which is not the fully synchronized state, as long as the near neighbor communication is emphasized.

\begin{Rem}
The condition $\kappa\bar{a} \in \mathcal{C}_N$ for $\kappa>0$ holds if and only if $\bar{a} \in \mathcal{C}_N$ so if the weighted homogeneous Kuramoto model is endowed with a uniform coupling strength,
\begin{align*}
    \dot{\th}_j=\k\sum_{l=1}^NA_{jl}\sin(\theta_l-\theta_j),
\end{align*}
then there is no phase transition in $\kappa$ with regards to the stability of the rotational twisted state.
\end{Rem}

\subsection{Existence of multiple rotational states}\label{ss:3}
It is important to note that the identification of the rotational twisted state creates a link between the indices of the oscillators and the order in which they are distributed around the circle. This raises the question as to whether or not a reordering of oscillators could provide multiple different rotational twisted states for the same adjacency matrix. Indeed, this amounts to letting $\Th_j=\frac{2\pi q}{N}$ for $q\neq 0,1$, which provides what are known as the q-twisted rotational states of the Kuramoto model \cite{WileyStrogatzGirvan06}. However, not all values of $q$ produce an appropriate rearrangement of indices, as if $\gcd(q,N)\neq 1$ then clustering can occur. We choose not to discuss this here and leave the phenomenon of rotational clusters to a future study.

Rather, we see that the number of distinct rotational twisted states for a given circulant adjacency matrix is given exactly by half of Euler's totient function.
\begin{Prop}\label{multi}
    Let $A$ be a circulant matrix generated by the row $(a_0, ..., a_{N-1})$.
    The permutations of $\indices$ that provide distinct twisted rotational states are of the form
    \begin{align*}
    \sigma:\indices&\to \indices\\
        k &\mapsto  (qk)\, \langle N\rangle 
    \end{align*}
    for some integer $q$ with $\gcd(q,N)=1$, in which for any $k\in\Z$, $k\langle N \rangle := (k-1)[ N +1]$. In particular, there are exactly $\frac{1}{2}\varphi(N)$ such permutations,  where $\varphi$ denotes Euler's totient function.
    
    Moreover, denoting by $P$ the transition matrix associated with a permutation $\sigma$, the corresponding rotational twisted state is stable if $PAP^{-1}$ satisfies the cone condition of Definition \ref{def:cone}, and the resulting speed of the rotational state is given by \begin{align*}
    \omega=\sum_{l=1}^{N-1} a_{\sigma(l)}\sin(\phi_l).
\end{align*}

\end{Prop}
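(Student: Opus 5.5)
Our plan is to reduce every candidate rotational twisted state to the standard one, $\th_j=\phi_j+\omega t+c$, for a relabelled system, and then apply Theorem~\ref{t:Kurrotpre}. A configuration is a rotational twisted state in the sense of Definition~\ref{RTS} exactly when there is a bijection $\sigma$ of $\indices$ such that oscillator $\sigma(k)$ sits at angle $\phi_k+\th(t)$. For the relabelled variables $\psi_k:=\th_{\sigma(k)}$, the Kuramoto system \eqref{Kuramoto} becomes the same system with interaction matrix $B=PAP^{-1}$, namely $B_{kl}=A_{\sigma(k)\sigma(l)}$.

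\textbf{Counting step.} First we determine which $\sigma$ give a genuine solution. Invariance of the configuration under the cyclic shift $j\mapsto j+1$ is what makes the right-hand side independent of $j$. Since $A$ is circulant, $B$ is again circulant, for generic $a$, exactly when $\sigma$ conjugates the cyclic shift to a power of itself. Such a $\sigma$ is an affine map $k\mapsto qk+s \pmod N$.

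The state must place the $N$ oscillators at distinct roots of unity. This forces $\sigma$ to be a bijection, which is equivalent to $\gcd(q,N)=1$. If $\gcd(q,N)\neq1$, points collide and we get the clusters excluded in the statement.

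The translation $s$ amounts to a rotation of the whole configuration. It is therefore absorbed into the constant $c$, so we may take $s=0$, which gives $\sigma(k)=(qk)\langle N\rangle$.

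Finally, $q$ and $-q$ give mirror-image configurations: the state is traversed in the opposite orientation, i.e. the complex conjugate of $\psi$. We identify these, as the authors do when they count distinct states. This leaves $\varphi(N)/2$ classes of units modulo $N$, for $N\ge3$.

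\textbf{Stability and speed.} For such a $\sigma$, the entries of $B$ are
\[
B_{kl}=a_{(\sigma(l)-\sigma(k))[N]}=a_{(q(l-k))[N]}.
\]
So $B$ is circulant, with generating vector $b_l=a_{\sigma(l)}$ (indices read mod $N$).

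The Proposition giving the existence of rotational twisted states, applied to $B$, yields a solution with speed
\[
\omega=\sum_{l=1}^{N-1}b_l\sin(\phi_l)=\sum_{l=1}^{N-1}a_{\sigma(l)}\sin(\phi_l).
\]
Linear stability is preserved under relabelling, because the Jacobians are conjugate by $P$. Theorem~\ref{t:Kurrotpre}, applied to $B=PAP^{-1}$, then gives stability whenever $B$ satisfies the cone condition~\eqref{cone-condition}.

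\textbf{Main obstacle.} The delicate part is the \emph{only if} direction of the counting: showing that no non-affine permutation gives a twisted rotational state. Two routes are available:
\begin{itemize}
\item \emph{Group-theoretic:} argue that a permutation mapping the cyclic structure to itself lies in the normaliser of $\Z_N$ in $S_N$, which is the affine group.
\item \emph{Direct:} require that $\sum_l a_{(l-j)[N]}\sin(\phi_{\sigma^{-1}(l)}-\phi_{\sigma^{-1}(j)})$ be independent of $j$ for generic $a$. Comparing coefficients of each $a_m$ forces $\sigma^{-1}(j+m)-\sigma^{-1}(j)$ to be independent of $j$, which makes $\sigma^{-1}$ affine.
\end{itemize}
We would try the direct route first.
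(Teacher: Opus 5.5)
Your proposal is correct and follows the paper's proof essentially step for step. You conjugate by $P$, note that $PAP^{-1}$ is circulant exactly when $\sigma$ is affine (for generic $a$, a caveat the paper leaves implicit), impose $\gcd(q,N)=1$, discard the shift, identify $q$ with $-q$, and read off the speed and stability by applying the existence proposition and Theorem~\ref{t:Kurrotpre} to $PAP^{-1}$.

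The one piece that goes beyond the paper, your preferred ``direct route'' for the only-if direction, needs a small repair when $N$ is even. There $\sin(2\pi x/N)=\sin(2\pi y/N)$ also allows $x+y\equiv N/2 \pmod N$, so comparing the coefficient of a single $a_m$ does not by itself make $\sigma^{-1}(j+m)-\sigma^{-1}(j)$ independent of $j$. You must combine several values of $m$, or simply rely on the normaliser argument. For instance, $m=N/2$ forces $\sigma^{-1}(j+N/2)=\sigma^{-1}(j)+N/2$. After that, a switch between $x$ and $N/2-x$ at consecutive $j$ for $m=1$ would give $\sigma^{-1}(j+2)=\sigma^{-1}(j+N/2)$, which is impossible for $N\geq 6$.
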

\begin{proof}
    Since $A$ is a circulant matrix
    \begin{align*}
        A_{jl}=a_{(l-j) [N]},
    \end{align*}
    for each $j,l\in\indices$.
    Let $P$ be a permutation matrix corresponding to a permutation $\sigma$ of $\{1,...,N\}$.  Applying the permutation gives
    \begin{align*}
    (PAP^{-1})_{j,l}= A_{\sigma(j),\sigma(l)}= a_{(\sigma(l)-\sigma(j)) [N]}.
    \end{align*}
For the resulting matrix $PAP^{-1}$ to be circulant, the quantity
$(\sigma(l)-\sigma(j))[N]$ must depend only on $(l-j)[N]$.
This occurs precisely when

\[
\sigma: k \mapsto (q k + r) \langle N \rangle, 
\]
for some integers $q\in\Z$ and $r\in\Z$, which implies
\begin{align*}
\sigma(l)-\sigma(j)
= (q(l-j)) [N].
\end{align*}
Moreover, since the shift by an integer $r$
does not change the ordering, we dismiss it by taking $r=0$.
Then, such a map $\sigma$ is bijective from $\indices$ to $\indices$ if and only if
$\gcd(q,N)=1$. 
Denoting by $\varphi$ Euler's totient function, this gives $\frac{1}{2}\varphi(N)$ distinct rotational twisted states as the reverse ordering of each permutation produces the same state, but with the opposite rotational speed $-\omega$.

The stability of the rotational twisted state and the rotation speed are then obtained by reasoning with the circulant matrix $PAP^{-1}$.
\end{proof}

As shown in the previous proposition, the homogeneous Kuramoto model with circulant adjacency matrix can exhibit multistability of distinct rotational twisted states along with stability of the fully synchronized state.
Note that this coprimality condition for circulant graphs is equivalent to the uniform $q$-twisted states \cite{WileyStrogatzGirvan06}.

\section{Stuart-Landau Rotational States}\label{s:4}
We continue our analysis considering the Stuart-Landau system which allows oscillators $z_j\in \mathbb{C}$ to have both phases $\th_j$ and amplitudes $r_j$, so that $z_j=r_je^{i\theta_j}$. We rewrite the model here with a circulant graph $A$ and identical Hopf-parameters $\beta_j\equiv \beta>0$.

\begin{align}\tag{SL}\label{2ndeq}
    \ddt z_j=\sum_{l=1}^NA_{jl}(z_l-z_j)+\frac{1}{\tau}\left(1-\frac{|z_j|^2}{\beta^2}\right)z_j, \ \ \ z_j\in \mathbb{C}, \ \ j\in\indices.
\end{align}

\subsection{General properties}

We begin our study of the model \eqref{2ndeq} by proving that the system stays bounded at all time, regardless of the circulant structure, and of whether the interaction matrix has positive values (i.e. the interactions are attractive) or negative values (the interactions are repulsive).

\begin{Prop}
Let $A\in M_N(\R)$ be a general matrix, and let \[\alpha:= \max_{j\in\{1,\cdots,N\}} \left( \sum_{k=1}^N |A_{jk}| \one_{A_{jk}<0}\right).\]
Let $(z_j)_{j\in\{1,\cdots,N\}}$ be a solution to the Stuart-Landau system \eqref{2ndeq} with interaction matrix $A$ and initial data $(z_j^0)_{j\in\{1,\cdots,N\}}$.
Then for all $t\geq 0$ and all $j\in\{1,\cdots,N\}$, it holds
\[
|z_j(t)| \leq \max\left( \max_{k\in\{1,\cdots,N\}} |z_k^0|, \beta\sqrt{2\tau\alpha +1}\right).
\]
In particular, if $A$ has no negative coefficient, $|z_j(t)| \leq \max\left( \max_{k\in\{1,\cdots,N\}} |z_k^0|, \beta\right).$
\end{Prop}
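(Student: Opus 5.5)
Our approach is a maximum-principle argument on the largest modulus. Set $M(t):=\max_{k}|z_k(t)|^2$ and $K:=\max\left(\max_k|z_k^0|^2,\ \beta^2(2\tau\alpha+1)\right)$. We show that $M(t)\le K$ for all $t\ge 0$. The function $M$ is a maximum of finitely many $C^1$ functions, so it is Lipschitz, and its upper Dini derivative at time $t$ is bounded by $\max_{j\in J(t)}\ddt|z_j|^2$. Here $J(t)$ is the set of indices attaining the maximum. It therefore suffices to bound $\ddt|z_j|^2$ for an index $j$ with $|z_j|^2=M(t)$.

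For such a $j$ we compute
\[
\tfrac12\ddt|z_j|^2=\sum_{l}A_{jl}\left(\Re(z_l\bar z_j)-|z_j|^2\right)+\frac1\tau\left(1-\frac{|z_j|^2}{\beta^2}\right)|z_j|^2 .
\]
We split the sum according to the sign of $A_{jl}$.

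\emph{Terms with $A_{jl}\ge 0$.} Since $j$ is maximal, $\Re(z_l\bar z_j)\le |z_l||z_j|\le|z_j|^2$, so these terms are $\le 0$.

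\emph{Terms with $A_{jl}<0$.} Here $\Re(z_l\bar z_j)-|z_j|^2\ge -2|z_j|^2$, so each term is at most $2|A_{jl}||z_j|^2$. Summing gives at most $2\alpha|z_j|^2$.

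Combining the two cases yields
\[
\tfrac12\ddt|z_j|^2\le |z_j|^2\left(2\alpha+\frac1\tau-\frac{|z_j|^2}{\tau\beta^2}\right),
\]
which is strictly negative whenever $|z_j|^2>\beta^2(2\tau\alpha+1)$.

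To conclude, suppose $M(t_1)>K$ for some $t_1$. Let $t_0=\sup\{t<t_1: M(t)\le K\}$; this set contains $0$, so $t_0$ is well defined. On $(t_0,t_1]$ we have $M>K\ge\beta^2(2\tau\alpha+1)$, so the upper Dini derivative of $M$ is negative there. A Lipschitz function with negative upper Dini derivative on an interval is nonincreasing on it, hence $M(t_1)\le M(t_0)=K$, a contradiction.

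The only delicate point is the nonsmoothness of the maximum. It is handled by the standard Dini-derivative (Danskin-type) lemma. Alternatively, one can avoid it by applying the same estimate to each $|z_j|^2$ at its first exit time from the ball of radius $\sqrt{K}+\varepsilon$ and then letting $\varepsilon\to0$.

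Finally, if $A$ has no negative coefficient then $\alpha=0$, and the bound reduces to $\max(\max_k|z_k^0|,\beta)$.
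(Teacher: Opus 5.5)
Your proof is correct and follows essentially the same route as the paper: a maximum-principle argument on $\max_k |z_k|^2$ via a Danskin-type lemma, splitting the coupling sum by the sign of $A_{jl}$, discarding the nonnegative-weight terms and bounding the negative ones by $2|A_{jl}||z_j|^2$, so that the largest modulus decreases whenever it exceeds $\beta\sqrt{2\tau\alpha+1}$. Your closing step (a negative upper Dini derivative on $(t_0,t_1]$ forces monotonicity, hence $M(t_1)\le M(t_0)\le K$) is actually a cleaner justification than the paper's, which asserts without justification that the a.e. derivative is positive just after the first exit time.
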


\begin{proof}
Let $m:t\mapsto \max_{j\in\{1,\cdots,N\}} |z_j(t)|$.
By Danskin's theorem, $m$ is differentiable for almost every $t\geq 0$, and 
\[
\frac{d}{dt} m^2(t) = \max_{j\in\Omega(t)} \frac{\dd}{\dt} |z_j(t)|^2,
\]
where $\Omega(t):=\arg\max_{j\in\{1,\cdots,N\}}\{|z_j(t)|\}$.

Trivially, $m(0)\leq M:=\max\left(m(0),\beta\sqrt{2\tau\alpha+1}\right)$.
Suppose that there exists $t>0$ such that $m(t)>M$, and let $t_0:=\inf\{t\geq 0 \,|\, m(t)>M\}$. By continuity, $m(t_0) = M$ and there exists $\varepsilon>0$ such that for almost all $t\in(t_0, t_0+\varepsilon)$, $\frac{\dd}{\dt}m(t)>0$ and $m(t)> M$.

Then, for almost all $t\in (t_0, t_0+\varepsilon)$, for all $j\in\Omega(t)$,
\[
\begin{split}
\frac{\dd}{\dt} |z_j(t)|^2  =&  2 \sum_{k=1}^N A_{jk} (z_k(t)-z_j(t))\cdot z_j(t) +\frac{2}{\tau}\left(1-\frac{|z_j(t)|^2}{\beta^2}\right)|z_j(t)|^2\\
= & 2\sum_{k\,|\, A_{jk}>0} A_{jk} (z_k(t)-z_j(t))\cdot z_j(t) + 2\sum_{k\,|\, A_{jk}<0} |A_{jk}| (z_k(t)-z_j(t))\cdot z_j(t) \\
&+\frac{2}{\tau\beta^2}\left(\beta^2-|z_j(t)|^2\right)|z_j(t)|^2 \\
\leq & 2\sum_{k\,|\, A_{jk}>0} A_{jk} (|z_k(t)||z_j(t)|-|z_j(t)|^2) + 2\sum_{k\,|\, A_{jk}<0} |A_{jk}| |z_j(t)-z_k(t)| |z_j(t)| \\
&+\frac{2}{\tau\beta^2}\left(\beta^2-|z_j(t)|^2\right)|z_j(t)|^2 .
\end{split}
\]
Then, 
\[
\begin{split}
\frac{d}{dt} m^2(t) = \max_{j\in\Omega(t)} \frac{\dd}{\dt} |z_j(t)|^2  \leq &\, 0  + 4\sum_{k\,|\, A_{jk}<0} |A_{jk}|m^2(t) 
+\frac{2}{\tau\beta^2}\left(\beta^2-m^2(t)\right)m^2(t) \\
\leq & \frac{2}{\tau\beta^2}\left( 2\tau\beta^2\sum_{k\,|\, A_{jk}<0} |A_{jk}| + \beta^2-m^2(t)\right)m^2(t)\\
\leq &  \frac{2}{\tau\beta^2}\left( M^2-m^2(t)\right)m^2(t)\leq 0
\end{split}
\]
by definition of $M$ and $m$.
By Danskin's theorem, this implies that for almost all $t\in [t_0, t_0+\varepsilon]$, $\frac{\dd m(t)}{\dt}<0$, which contradicts the existence of $t\geq 0$ such that  $m(t)>M$.
\end{proof}

\subsection{Existence of the rotational state}
Inspired by our investigation of the Kuramoto model, we begin with identifying the rotational twisted state. For a circulant matrix $A$, let us again define
\begin{align}\label{eq:omega}
    d=\sum_{l=1}^{N-1} a_l, \quad \rho=\sum_{l=1}^{N-1}a_l\cos(\phi_l), \quad \omega=\sum_{l=1}^{N-1}a_l\sin(\phi_l),
\end{align}
where $\phi_l=\frac{2\pi l}{N}$ are the roots of unity and $d$ is the degree of each node, which is uniform due to the circulant structure. 
\begin{Prop}\label{Prop:ExistenceMillingSL}
Let $A$ be a circulant matrix satisfying $d-\rho > 0$, where $d$ and $\rho$ are given by \eqref{eq:omega}. Let $\tau^*=(d-\rho)^{-1}$. Then, for all $\tau<\tau^*$, for all $\theta\in\S$, the \textit{rotational twisted state} defined by
    \begin{align}\label{ms1}
       z_j(t)=Re^{i\left(\omega t+\phi_j+\theta\right)},
    \end{align}
    with 
    \begin{align}\label{ms2}
       R:=\beta\sqrt{\tau\left(\rho-d+\frac{1}{\tau }\right)}=\beta\sqrt{\tau\left(\frac{1}{\tau }-\frac{1}{\tau^*}\right)}<\beta
    \end{align} is a solution to \eqref{2ndeq}.\\
    On the other hand, if $d-\rho \leq 0$, then the rotational twisted state given by \eqref{ms1} exists without any restriction on $\tau$, with $R:=\beta\sqrt{\tau\left(\rho-d+\frac{1}{\tau }\right)}\geq\beta$.
\end{Prop}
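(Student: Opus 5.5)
The plan is to substitute the ansatz \eqref{ms1} directly into \eqref{2ndeq} and check that both sides agree. The circulant structure should make the coupling term equal to a fixed complex multiple of $z_j$. This reduces the system to one scalar complex equation, whose real part fixes $R$ and whose imaginary part fixes $\omega$.

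First I would compute the coupling term. With $z_l = Re^{i(\omega t+\phi_l+\theta)}$ and $A_{jl}=a_{(l-j)[N]}$, we get
\[
\sum_{l=1}^N A_{jl}(z_l-z_j) = z_j\sum_{l=1}^N a_{(l-j)[N]}\left(e^{i(\phi_l-\phi_j)}-1\right).
\]
Since $\phi_l-\phi_j \equiv \phi_{(l-j)[N]} \pmod{2\pi}$, the change of index $m=(l-j)[N]$ turns the sum into $\sum_{m=1}^{N-1}a_m(e^{i\phi_m}-1)$. The $m=0$ term vanishes, and the sum is independent of $j$. By \eqref{eq:omega} it equals $(\rho-d)+i\omega$. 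This is the same change-of-variables argument used in the Kuramoto existence proposition, now applied to complex exponentials.

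Next I would compute the left-hand side, $\ddt z_j = i\omega z_j$. Since $|z_j|=R$, the equation becomes
\[
i\omega z_j = \left[(\rho-d)+i\omega + \frac{1}{\tau}\left(1-\frac{R^2}{\beta^2}\right)\right]z_j.
\]
For $R>0$ the imaginary parts match automatically. The real parts match if and only if $\frac{1}{\tau}(1-R^2/\beta^2) = d-\rho$, that is, $R^2=\beta^2\tau(\rho-d+1/\tau)$.

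The remaining step is to check when this defines a real positive $R$. If $d-\rho>0$, then $\rho-d+1/\tau = 1/\tau-1/\tau^*$, which is positive exactly when $\tau<\tau^*$. In that case $R$ is well defined, and $R<\beta$ because $\tau(1/\tau-1/\tau^*)<1$. If $d-\rho\le 0$, then $\rho-d+1/\tau\ge 1/\tau>0$ for every $\tau>0$, so $R$ exists with no restriction on $\tau$, and $R\ge\beta$.

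I expect no real obstacle here. The only point needing care is the index bookkeeping in the change of variables, namely that $e^{i(\phi_l-\phi_j)}=e^{i\phi_{(l-j)[N]}}$ holds modulo $2\pi$ for $l\in\{1,\dots,N\}$.
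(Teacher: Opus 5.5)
Your proposal is correct and follows essentially the same route as the paper: substitute the ansatz, factor out $z_j$, and split into real and imaginary parts, so that the real part fixes $R$ and the imaginary part matches $\omega$. The only difference is cosmetic: the paper treats the radius and rotation speed as unknowns and solves for them, while you verify the given values directly, and you spell out the index shift $m=(l-j)[N]$ more carefully than the paper does.
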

\begin{proof}
    Let us begin with the characterization of \eqref{ms1}-\eqref{ms2}. Taking as an ansatz $z_j(t)=re^{i\left(\Omega t+\phi_j+\theta\right)}$ where $r>0$ and $\Omega \in \mathbb{R}$ are yet to be defined. Differentiating $z_j$ yields
    \begin{align*}
        \ddt z_j= i\Omega z_j&=\sum_{l=1}^NA_{jl}(z_l-z_j)+\frac{1}{\tau}\left(1-\frac{|z_j|^2}{\beta^2}\right)z_j,
        \end{align*}
        i.e.
        \begin{align*}
       i\Omega re^{i\left(\Omega t+\phi_j+\theta\right)}  &=\sum_{l=1}^NA_{jl}re^{i\left(\Omega t+\phi_j+\theta\right)}\left(e^{i(\phi_l-\phi_j)}-1\right)+\frac{1}{\tau}re^{i\left(\Omega t+\phi_j+\theta\right)}\left(1-\frac{r^2}{\beta^2}\right).
    \end{align*}
    Dividing out by $z_j=re^{i\left(\Omega t+\phi_j+\theta\right)}$ and taking real and imaginary parts yields
    \begin{align}
       0&=\sum_{l=1}^NA_{jl}\left(\cos\left(\phi_l-\phi_j\right)-1\right)+\frac{1}{\tau}\left(1-\frac{r^2}{\beta^2}\right)=\sum_{l=1}^Na_{l}\left(\cos\left(\phi_l\right)-1\right)+\frac{1}{\tau}\left(1-\frac{r^2}{\beta^2}\right),\label{Req}
        \end{align}
        and
       \begin{align*} \Omega&=\sum_{l=1}^NA_{jl}\sin\left(\phi_l-\phi_j\right)=\sum_{l=1}^{N-1}a_l\sin(\phi_l)=\omega.
    \end{align*}
    Solving \eqref{Req} for $r$ leads to $r=R$, with $R$ given in Equation \eqref{ms2}. Further, for the milling state to exist it is necessary that $R\in\R_+$. 
    If $ d-\rho >0$, this imposes the inequality
    \begin{align}\label{eq:taustar}
        \tau<\left(d-\rho\right)^{-1}=\tau^*.
    \end{align}

    Hence if $\tau<\tau^*$, then \eqref{ms1}-\eqref{ms2} defines a rotational twisted state.
    On the other hand, in the case $d-\rho\leq 0$, 
    the rotational state exists for all $\tau>0$. In particular, $\tau^*=\infty$ holds for any nonpositive circulant matrix.
    
\end{proof}

In the case $d-\rho>0$, the value $\tau^*$ represents a phase transition, since the rotational state cannot exist for $\tau\geq \tau^*$, as it would imply $R^2\leq 0$.

\subsection{Explicit solution for radially symmetric initial data}\label{ss:Invariant}
 Supposing we begin on a common radius $R(0)>0$ with $\Th_j(0)= \theta_{(j+1)\langle N\rangle}(0)-\theta^0_j(0)=\frac{2\pi}{N} \ \mod 2\pi$ we can compute the explicit solution for the evolution of $R(t)$ and $\th_j(t)$.
We assume again that $A$ is a circulant matrix generated by the vector $(a_j)_{j\in\{0,\cdots,N-1\}}$ so that $A_{jl} = a_{(l-j)[N]}$, and we look for solutions to the Stuart-Landau system \eqref{2ndeq} that are invariant by a rotation of $\frac{2\pi}{N}$, i.e. solutions that satisfy 
$z_j(t) = R(t) e^{i(\frac{2\pi j}{N}+\theta(t))}$ 
for some common radius $R:\R_+\rightarrow\R_+$ and angle $\theta:\R_+\rightarrow \mathbb{S}$.
Plugging this ansatz into the system of ODEs \eqref{2ndeq}, we obtain
\[
\begin{split}
    \dot z_j(t) & = \left(\dot R(t) + i R(t)  \dot\theta(t)\right) e^{i(\theta(t)+\frac{2\pi j}{N})} \\
    & = \sum_{l=1}^N A_{jl} R(t) e^{i(\frac{2\pi l}{N}+\theta(t))} \left(e^{i(\frac{2\pi l}{N}-\frac{2\pi j}{N})}-1\right) +\frac{1}{\tau} \left(1-\frac{R^2(t)}{\beta^2}\right) R(t).
\end{split}
\]
Separating real and imaginary parts, we get the following closed equation for the evolution of the radius
\[
\begin{split}
\dot R(t) & = R(t) \sum_{l=1}^N A_{jl} \left( \cos\left(\frac{2\pi(l-j)}{N}\right) -1\right) +\frac{1}{\tau} \left(1-\frac{R^2(t)}{\beta^2}\right) R(t)\\
& = R(t) \sum_{l=1}^N a_{(l-j)[N]} \left( \cos\left(\frac{2\pi(l-j)}{N}\right) -1\right) +\frac{1}{\tau} \left(1-\frac{R^2(t)}{\beta^2}\right) R(t)\\
 & = R(t) \left(\left(\frac{1}{\tau}-\frac{1}{\tau^*}\right) -\frac{1}{\tau} \frac{R^2(t)}{\beta^2}\right), 
\end{split}
\]
in which $\tau^*$ is given in \eqref{eq:taustar}.
Assuming that $R(t)>0$, the angle $\theta(t)$ can simply be found by integrating the constant angular velocity: 
\[
\dot \theta(t)  = \sum_{l=1}^{N-1} a_{l}  \sin\left(\frac{2\pi l}{N}\right). 
\]
Denoting $R_0:=R(0)$, we focus on solving 
\begin{equation}\label{eq:Rdot}
    \dot R(t) = R(t) \left(\left(\frac{1}{\tau}-\frac{1}{\tau^*}\right) -\frac{1}{\tau} \frac{R^2(t)}{\beta^2}\right), 
\end{equation}
distinguishing between three cases.
This gives us the following result.
\begin{Prop} Let $A$ be a circulant matrix generated by the vector $(a_j)_{j\in\{0,\cdots,N-1\}}$.
Let $z_j\in C(\R_+;\C)$ be the solution to the Stuard-Landau system \eqref{2ndeq} with initial data given by $z_j^0 = R_0 e^{i(\frac{2\pi}{N}+\theta_0)}$. Let $\omega$ be defined as in \eqref{eq:omega}. Then, $z_j(t)= R(t) e^{i(\frac{2\pi}{N}+\theta(t))}$, in which for all $t\in\R_+$, 
\[
\theta(t) = \theta(0)+\omega t,
\]
and $R$ depends on the sign of $\tau-\tau^*$.

In the case $\tau>\tau^*$,
\[
R(t) = R_0\sqrt{\frac{ \frac{1}{\tau^*}-\frac{1}{\tau}}{\left(\frac{R_0^2}{\tau\beta^2}+(\frac{1}{\tau^*}-\frac{1}{\tau})\right)e^{2(\frac{1}{\tau^*}-\frac{1}{\tau})t}-\frac{R_0^2}{\tau\beta^2}} },
\]
so that $R$ converges to zero exponentially, with rate $-(\frac{1}{\tau^*}-\frac{1}{\tau})$.

In the case $\tau=\tau^*$, 
\[
R(t) = R_0\beta \sqrt{\frac{\tau }{\tau\beta^2 + 2R_0^2 t}},
\]
so that again the radius converges to zero, but only with a polynomial rate $t^{1/2}$.

In the case $\tau<\tau^*$,
\[
R(t) = R_0\sqrt{\frac{ \frac{1}{\tau}-\frac{1}{\tau^*}}{\left((\frac{1}{\tau}-\frac{1}{\tau^*})-\frac{R_0^2}{\tau\beta^2}\right)e^{-2(\frac{1}{\tau}-\frac{1}{\tau^*})t}+\frac{R_0^2}{\tau\beta^2}} },
\]
so that for any positive initial data $R_0>0$, $R$ converges exponentially to the rotational twisted state characterized by radius $ \frac{1}{\beta}\sqrt{1-\frac{\tau}{\tau^*}}$.
We recover the existence of the rotational twisted state $\eqref{ms1}-\eqref{ms2}$ for any $\tau<\tau^*$,
and all initial data on the manifold ($R_i(0)=R_0$, $\theta_i(0)=\frac{2\pi}{N}+\theta_0$) converges to it. 
\end{Prop}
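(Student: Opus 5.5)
The plan has three steps: show that the rotationally symmetric ansatz is invariant, reduce it to a scalar radial equation, and solve that equation explicitly as a Bernoulli equation. The initial data is meant to be $z_j^0=R_0e^{i(\phi_j+\theta_0)}$.

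\textbf{Invariance of the ansatz.} Let $\sigma$ be the cyclic shift $j\mapsto (j+1)\langle N\rangle$. Since $A$ is circulant, $A_{\sigma(j)\sigma(l)}=A_{jl}$. The vector field of \eqref{2ndeq} is equivariant under the rotation $z\mapsto e^{i\alpha}z$, because the forcing term depends on $z_j$ only through $|z_j|$. Hence, if $z(t)$ is a solution, so is
\[
w_j(t):=e^{-2\pi i/N}z_{\sigma(j)}(t).
\]
The initial data satisfies $w(0)=z(0)$, so uniqueness gives $w\equiv z$. Iterating this identity shows that $z_j(t)=e^{i\phi_j}z_N(t)$ for all $j$ and $t$. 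Writing $z_N(t)=R(t)e^{i\theta(t)}$ recovers exactly the ansatz of Section~\ref{ss:Invariant}. This step removes any need to justify the ansatz a priori.

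\textbf{Reduction to the radial equation.} Substituting the ansatz and separating real and imaginary parts, as already done above, gives $\dot\theta=\omega$. This holds because the common factor $R$ cancels, provided $R(t)>0$. It also gives the radial equation \eqref{eq:Rdot}. Positivity of $R$ is automatic: \eqref{eq:Rdot} has the form $\dot R=R\,g(R)$, so $R\equiv 0$ is an invariant solution, and uniqueness keeps $R(t)>0$ whenever $R_0>0$. Therefore $\theta(t)=\theta_0+\omega t$.

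\textbf{Solving the radial equation.} Equation \eqref{eq:Rdot} is a Bernoulli equation. Set $\mu:=\frac1\tau-\frac1{\tau^*}$ and $u:=R^{-2}$. Then
\[
\dot u=-2\mu u+\frac{2}{\tau\beta^2},
\]
which is linear in $u$.
\begin{itemize}
\item For $\mu\neq 0$:
\[
u(t)=\Big(u_0-\frac{1}{\tau\beta^2\mu}\Big)e^{-2\mu t}+\frac{1}{\tau\beta^2\mu},\qquad u_0=R_0^{-2}.
\]
Inverting and multiplying numerator and denominator by $R_0^2\mu$ yields the stated formulas. The case $\tau<\tau^*$ corresponds to $\mu>0$, and the case $\tau>\tau^*$ to $\mu<0$, where one rewrites the expression with $-\mu>0$.
\item For $\mu=0$: $u(t)=u_0+\frac{2t}{\tau\beta^2}$, which gives $R(t)=R_0\beta\sqrt{\tau/(\tau\beta^2+2R_0^2t)}$.
\end{itemize}

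\textbf{Asymptotics.} These are read off directly. When $\mu<0$, the denominator is positive for all $t$ (a sum of positive terms after rearrangement) and grows like $e^{2|\mu|t}$, so $R$ decays exponentially at rate $|\mu|$. When $\mu=0$, $R$ decays like $t^{-1/2}$. When $\mu>0$, $R(t)\to\sqrt{\tau\mu}\,\beta=R$ exponentially, which is the radius \eqref{ms2} of Proposition~\ref{Prop:ExistenceMillingSL}. The statement's expressions "$\frac1\beta\sqrt{1-\tau/\tau^*}$" and "$t^{1/2}$" appear to be misprints for $\beta\sqrt{1-\tau/\tau^*}$ and $t^{-1/2}$, and I will correct them in the final statement.

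\textbf{Main obstacle.} The only delicate point is global existence of the closed-form solution, namely that the denominators never vanish. For $\mu>0$, the denominator interpolates monotonically between $R_0^{-2}\mu\cdot R_0^2=\mu$ (up to the prefactor) and $\frac{R_0^2}{\tau\beta^2}$, both positive, so it is positive for all $t$. For $\mu<0$, the term $\frac{R_0^2}{\tau\beta^2}+|\mu|$ multiplies $e^{2|\mu|t}\ge1$, so the denominator is at least $|\mu|>0$. This confirms that the formulas are valid on all of $\R_+$.
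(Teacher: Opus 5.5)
Your proposal is correct and follows essentially the paper's route: restrict to the rotation-invariant configuration, separate real and imaginary parts to obtain $\dot\theta=\omega$ and the closed radial equation \eqref{eq:Rdot}, and integrate it explicitly in the three cases. Your additions are sound and fill in what the paper leaves implicit: the equivariance-plus-uniqueness argument showing the solution actually stays in the symmetric class (the paper simply posits the ansatz), the linearizing substitution $u=R^{-2}$, the check that the denominators never vanish, and the corrections ($\phi_j$ in the initial data, $\beta\sqrt{1-\tau/\tau^*}$ for the limiting radius, $t^{-1/2}$ for the decay); one small caveat is that identifying the cases by the sign of $\tau-\tau^*$ rather than the sign of $\mu=\frac1\tau-\frac1{\tau^*}$ tacitly assumes $d-\rho\geq 0$, which is automatic for nonnegative $A$.
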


\subsection{Stability of the rotational state}
The stability of the rotational twisted state off of the above submanifold is complicated by the amplitude dependence of each oscillator. However, we can still show the existence of a quantitative upper bound $\tau^{**}$ for which the rotational twisted state is linearly stable if $\tau<\tau^{**}$, and if the circulant matrix also satisfies the same cone condition provided in Definition \ref{def:cone}.

\begin{Theorem}\label{t:SLstab}
    Let $N\geq 5$. Let $A$ be a circulant matrix satisfying the cone condition of Definition \ref{def:cone}. 
    Let $d= \sum_{l=1}^{N-1} a_l$,  $\rho=\sum_{l=1}^{N-1}a_l\cos(\phi_l)$, and suppose that $\frac{1}{\tau}>d-\rho$. 
    Then there exists $\tau^{**}>0$
    such that for all $\tau\leq \tau^{**}$, for all $c\in\S$, the \textit{rotational twisted state} defined by
    \begin{align*}
        \th_j(t)&=\phi_j+\omega t +c, \quad j\in\indices ,\\
        r_j(t)&= R , \quad j\in\indices,
    \end{align*}
    in which 
    \[R:=\beta\sqrt{\tau\left(\rho-d+\frac{1}{\tau}\right)},\quad \omega = \sum_{l=1}^{N-1}a_l\sin(\phi_l) ,\quad \displaystyle  \phi_j=\frac{2\pi j}{N},\]
    is linearly stable for  the Stuart-Landau model \eqref{eq:mainmodel}. 
    Moreover, if $d-\rho>0$, it holds $\tau^{**}\leq \tau^* = (d-\rho)^{-1}$.
\end{Theorem}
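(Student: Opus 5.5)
We will linearize \eqref{eq:mainmodel} around the rotational twisted state in the co-rotating frame. Setting $z_j = e^{i(\omega t + c)} w_j$, the state becomes the fixed point $w_j^* = R e^{i\phi_j}$ of
\[
\dot w_j = -i\omega w_j + \sum_{l} A_{jl}(w_l - w_j) + \tfrac{1}{\tau}\left(1-\tfrac{|w_j|^2}{\beta^2}\right) w_j .
\]
We then write perturbations in polar-type coordinates adapted to each oscillator, $w_j = (R+\rho_j)e^{i(\phi_j+\psi_j)}$ to first order, which is equivalent to $w_j = e^{i\phi_j}(R + \rho_j + iR\psi_j)$. The key observation is that the factor $e^{i\phi_j}$ makes the linearized operator circulant again. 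Indeed, $A_{jl}e^{i\phi_l} = e^{i\phi_j}a_{(l-j)[N]}e^{i\phi_{(l-j)}}$. Thus, in the variables $u_j := e^{-i\phi_j}\delta w_j = \rho_j + iR\psi_j$, the linearization reads
\[
\dot u_j = -i\omega u_j + \sum_l a_{(l-j)[N]}\bigl(e^{i\phi_{l-j}}u_l - u_j\bigr) + \tfrac{1}{\tau}\Bigl(1-\tfrac{R^2}{\beta^2}\Bigr)u_j - \tfrac{2R^2}{\tau\beta^2}\,\mathrm{Re}(u_j).
\]
Using \eqref{Req}, the combination $-i\omega - d + \tfrac{1}{\tau}(1-\tfrac{R^2}{\beta^2})$ equals $-i\omega - \rho$. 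Writing $\kappa := \frac{2R^2}{\tau\beta^2} = 2\left(\tfrac{1}{\tau} - (d-\rho)\right)$, the operator is a real-linear map on $\C^N \cong \R^{2N}$ that is invariant under the cyclic shift.

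The next step is a discrete Fourier decomposition. Since the term $\mathrm{Re}(u_j)$ couples $u$ and $\bar u$, we pair Fourier mode $k$ of $u$ with mode $-k$ of $\bar u$, setting $u_j = \alpha e^{ik\phi_j} + \bar\gamma e^{-ik\phi_j}$. This reduces the $2N\times 2N$ real system to $N$ blocks of size $2\times 2$, indexed by $k$. Define $\mu_k := \sum_l a_l e^{i(k+1)\phi_l} - \rho - i\omega$, so that $\mu_0 = 0$ by definition of $\omega$ and $\rho$. Block $k$ is then
\[
M_k = \begin{pmatrix} \mu_k - \kappa/2 & -\kappa/2 \\ -\kappa/2 & \overline{\mu_{-k-2}} - \kappa/2 \end{pmatrix},
\]
with the indices of the second entry handled carefully. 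One checks that the neutral mode $k=0$ (rotation) gives eigenvalues $0$ and $-\kappa$. In general, the trace and determinant of $M_k$ are explicit in terms of the quantities $\mathrm{Re}\,\mu_{m} = -\sum_l a_l\cos(\phi_l)(1-\cos(m\phi_l)) = -(W\bar a)_{m}$, which is exactly the Kuramoto quantity from Theorem~\ref{t:Kurrotpre}, and of the imaginary parts.

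The stability then follows from a singular-perturbation argument in $\tau$. As $\tau\to 0$ we have $\kappa \approx 2/\tau \to \infty$. Each block $M_k$ therefore has one eigenvalue $\approx -\kappa$, the fast amplitude direction. The other eigenvalue, by a Schur complement expansion, equals $\frac{1}{2}\bigl(\mu_k + \overline{\mu_{-k-2}}\bigr) + O(1/\kappa)$. This is precisely the Kuramoto Jacobian eigenvalue $\lambda_{k+1}$ (phase direction, $\mathrm{Im}\,u$), whose real part is $-(W\bar a)_{k+1} < 0$ under the cone condition \eqref{cone-condition}, except for the neutral rotation mode. Since there are finitely many modes, the cone condition gives a uniform gap
\[
\delta := \min_{1\le m\le N-1} (W\bar a)_m > 0 .
\]
Choosing $\tau^{**}$ so that the $O(1/\kappa)$ remainder, bounded explicitly by something like $C(\sum_l|a_l|)^2\tau$, is less than $\delta$ yields strict negativity of all non-neutral eigenvalues. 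This is linear stability, modulo the rotational symmetry. Finally, if $d-\rho > 0$, the state only exists for $\tau < \tau^*$ by Proposition~\ref{Prop:ExistenceMillingSL}, so we take $\tau^{**} \le \tau^*$.

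I expect the main obstacle to be the bookkeeping in the Fourier reduction. One must correctly pair modes $k$ and $-k-2$, which arises from the conjugation together with the twist $e^{i\phi_j}$. One must also obtain a clean, quantitative perturbation bound for the slow eigenvalue, rather than a mere limit statement. To do this, I would first compute $\det M_k$ and $\mathrm{tr}\,M_k$ exactly. I would then apply Routh–Hurwitz for complex $2\times 2$ matrices, or alternatively the Gershgorin/Schur bound, to extract an explicit $\tau^{**}$.
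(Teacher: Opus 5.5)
Your strategy is the paper's. Your $u_j=\delta r_j+iR\,\delta\theta_j$ is a linear change of the paper's polar perturbation variables, and the Fourier reduction yields the same $2\times 2$ blocks. However, the bookkeeping you single out as the crux is wrong as written, exactly where the cone condition has to enter.

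Set $\hat a(p):=\sum_{l=1}^{N-1}a_l e^{ip\phi_l}$, so that $\mu_m=\hat a(m+1)-\rho-i\omega$. Inserting $u_j=\alpha e^{ik\phi_j}+\bar\gamma e^{-ik\phi_j}$ into your linearized equation gives $\dot\gamma=(\hat a(k-1)-\rho+i\omega)\gamma-\frac{\kappa}{2}(\alpha+\gamma)$. Since $\hat a(k-1)-\rho+i\omega=\overline{\mu_{-k}}$, the second diagonal entry is $\overline{\mu_{-k}}-\kappa/2$. Your entry $\overline{\mu_{-k-2}}$ equals $\mu_k+2i\omega$ instead, and this causes two failures:
\begin{itemize}
\item With your entry, $\det M_0=-i\kappa\omega\neq 0$, so your own neutral-mode check fails whenever $\omega\neq 0$.
\item The slow eigenvalue would tend to $\hat a(k+1)-\rho$. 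Its real part vanishes at $k=N-2$, so the cone condition would give nothing there.
\end{itemize}

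Likewise, the claim $\mathrm{Re}\,\mu_m=-(W\bar a)_m$ is false. In fact $\mathrm{Re}\,\mu_m=-(W\bar a)_m-\sum_l a_l\sin(\phi_l)\sin(m\phi_l)$. The cross term cancels only in the average over the correct pair:
\[
s_k:=\tfrac12(\mu_k+\overline{\mu_{-k}})=\sum_l a_l\cos(\phi_l)(e^{ik\phi_l}-1).
\]
This is the Kuramoto eigenvalue $\lambda_k$, not $\lambda_{k+1}$, and $\mathrm{Re}\,s_k=-(W\bar a)_k<0$ for $1\le k\le N-1$.

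After this correction, $M_k=\mathrm{diag}(\mu_k,\overline{\mu_{-k}})-\frac{\kappa}{2}\mathbf{1}\mathbf{1}^{T}$, and no asymptotic expansion is needed. Set $p_k:=\tfrac12(\mu_k-\overline{\mu_{-k}})=i\sum_l a_l\sin(\phi_l)(e^{ik\phi_l}-1)$. The eigenvalues are then exactly $s_k-\frac{\kappa}{2}\pm\sqrt{\kappa^2/4+p_k^2}$. This is the paper's formula for $\lambda_{1,2}(M_k)$, because $-\kappa/2=d-\rho-\frac{1}{\tau}$, $s_k=\lambda_k^{\theta\theta}$ and $p_k^2=\lambda_k^{r\theta}\lambda_k^{\theta r}$.

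Taking the principal root:
\begin{itemize}
\item One root has real part at most $\mathrm{Re}\,s_k-\kappa/2<0$.
\item Since $|\sqrt{\kappa^2/4+p_k^2}-\kappa/2|\le 2|p_k|^2/\kappa$, the other has real part at most $-(W\bar a)_k+2|p_k|^2/\kappa$.
\end{itemize}
Hence stability holds as soon as $\frac{1}{\tau}>d-\rho+\max_{1\le k\le N-1}|p_k|^2/(W\bar a)_k$. This replaces the paper's geometric Lemma and gives an explicit $\tau^{**}$ of the same type.

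Once the pairing is fixed, your proof is the paper's proof in twisted Cartesian coordinates. What your formulation adds is the rank-one form of the stiff part, which makes the fast/slow split transparent: one eigenvalue near $-\kappa$ and one near the Kuramoto eigenvalue.
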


\begin{proof}

As in the Kuramoto case (Section \ref{s:3}), as we know the limiting rotational speed of the system we shift to the rotational reference frame
\begin{align}\label{RRFSL}
    \ddt z_j=\sum_{l=1}^NA_{jl}(z_l-z_j)+\frac{1}{\tau}\left(1-i\omega-\frac{|z_j|^2}{\beta^2}\right)z_j
\end{align}
Breaking down \eqref{RRFSL} into its amplitude and magnitude equations yields
\begin{align*}
    \ddt r_j & =\sum_{l=1}^N A_{jl}(\cos(\theta_{l}-\theta_j)r_l-r_j)+\frac{1}{\tau}\left(1-\frac{r_j^2}{\beta^2}\right)r_j, \\
    \ddt \th_j&=-\omega+\sum_{l=1}^NA_{jl}\frac{r_l}{r_j}\sin(\theta_l-\theta_j). \label{circ:phase}
\end{align*}
     Introducing the functions $F_r:(\R_+)^N\times \R^N\to \R^N$ and $F_\theta:(\R_+)^N\times \R^N\to \R^N$, the equations for the radii and the phases can then be written as:
\begin{align*}
    \ddt r_j=&(F_r)_j:=\sum_{l=1}^N a_{(l-j)[N]}(\cos(\theta_{l}-\theta_j)r_l-r_j)+\frac{1}{\tau}\left(1-\frac{r_j^2}{\beta^2}\right)r_j,\\
    \ddt \theta_j&=(F_{\theta})_j:=-\omega+\sum_{l=1}^N a_{(l-j)[N]}\frac{r_l}{r_j}\sin(\th_l-\th_j).
\end{align*}
    Since the Stuart-Landau oscillators are amplitude-dependent, the fixed-point analysis must take into account all phases $(\th_j)_{j\in\indices}$ as well as the amplitudes $(r_j)_{j\in\indices}$. Thus, the fixed-point map is given by
    \begin{align*}
        F_{r,\theta}=(F_r,F_{\theta})
    \end{align*}
    and the corresponding Jacobian will be a $2N \times 2N$ matrix. More specifically, the Jacobian is given by 4 $N\times N$ blocks that we will now compute piece by piece.
    \[
    J=\left(\begin{array}{c|c}
       D_{r,r}  & D_{\theta,r}  \\
         \hline
       D_{r,\theta}  & D_{\theta, \theta}
    \end{array}\right),
    \]
    in which for each $j,k\in\indices$ and $a,b\in\{r,\theta\}$, $(D_{a,b})_{j,k}:=\partial_{a_k}(F_{b})_j(R,\cdots,R,\phi_1+c,\cdots,\phi_N+c)$ are the partial derivatives of $F_{r,\theta}$ evaluated at the rotational twisted state value found in Prop.~\ref{Prop:ExistenceMillingSL}. 
    We begin by computing $D_{r,r}$. For its diagonal terms, it holds for all $j\in\indices$:
    \begin{align*}
        \partial_{r_j}(F_r)_j=\frac{1}{\tau}\left(1-\frac{3r_j^2}{\beta^2}\right)-d=\frac{1}{\tau}\left(1-\frac{3R^2}{\beta^2}\right)-d.
    \end{align*}
    Using $(F_r)_j(R,\cdots,,R,\phi_1+c,\cdots,\phi_N+c)=0$, we have $\frac{1}{\tau}\left(1-\frac{R^2}{\beta^2}\right)-d=-\rho$ and hence
    \begin{align*}
        (D_{r,r})_{j,j}=\partial_{r_j}(F_r)_j(R,\cdots,R,\phi_1+c,\cdots,\phi_N+c)=-\frac{2R^2}{\tau\beta^2}-\rho.
    \end{align*}
    This provides a negative dominance on the diagonal of the upper-left block of the Jacobian, which will help facilitate stability when $\tau$ is small.
    
    Computing the off-diagonal elements gives
    \begin{align*}
        &(D_{r,r})_{j,(j+n)\langle N\rangle}=\partial_{r_{{j+n}\langle N\rangle}}(F_r)_j(R,\cdots,R,\phi_1+c,\cdots,\phi_N+c)= a_{n[N]}\cos\left(\phi_{n}\right),
    \end{align*}
    in which we recall that $\phi_n=\frac{2\pi n}{N}$.
    Thus the upper-left block of the Jacobian is given by a $N\times N$ circulant submatrix generated by the row \[\overline{d_{r,r}}=\left(-\frac{2R^2}{\tau\beta^2}-\rho, a_1\cos(\phi_1), \ldots ,a_{N-1}\cos(\phi_{N-1})\right).\] Note that this submatrix is a signed, directed Laplacian diagonally dominated by $-\frac{2R^2}{\tau\beta^2}$ since the sum of the off-diagonal elements gives exactly $\rho$.\\

    Moving to the upper-right block, we must compute $(D_{\theta,r})_{j,k} = \partial_{\theta_k}(F_r)_j(R,\cdots,R,\phi_1+c,\cdots,\phi_N+c)$ for each $j,k\in\indices$. Then, for all $j\in\indices$,
    \begin{align*}
        (D_{\theta,r})_{j,j}=\sum_{l=1}^{N-1}a_{(l-j)[N]}\sin(\th_l-\th_j)r_l=R\omega.
    \end{align*}
    The off-diagonal elements give
    \begin{align*}
        (D_{\theta,r})_{j,(j+n)\langle N\rangle}&=\partial_{\Theta_{(j+n)\langle N\rangle}}(F_r)_j(R,\cdots,R, \phi_1+c,\cdots,\phi_N+c))\\
        &=-a_{n[N]}r_{n\langle N\rangle}\sin(\phi_n)=-a_{n[N]}R\sin(\phi_n)
    \end{align*}
    
    Thus the upper-right block is given by another circulant matrix generated by the row $$\overline{d_{\theta,r}}=R\left(\omega, -a_1\sin(\phi_1),\cdots -a_{N-1}\sin(\phi_{N-1}) \right)$$
    which is also a signed, directed Laplacian.
    The bottom-left block involves computing each $(D_{r,\theta})_{j,(j+n)\langle N\rangle}=\partial_{r_k}(F_{\theta})_j(R,\cdots,R,\phi_1+c,\cdots,\phi_N+c)$. The main diagonal is given by
    \begin{align*}
        (D_{\theta,r})_{j,j}=&\partial_{r_j}(F_{\theta})_j(R,\cdots,R,\phi_1+c,\cdots,\phi_N+c)=-\sum_{l=1}^Na_{(l-j)[N]}\frac{r_l}{r_j^2}\sin(\th_l-\th_j)=-\frac{1}{R}\omega
    \end{align*}
        and the off-diagonal terms
    \begin{align*}
        (D_{\theta,r})_{j,(j+n)\langle N\rangle}&=\partial_{r_{(j+n)\langle N \rangle}}(F_{\theta})_j(R,\cdots,R,\phi_1+c,\cdots,\phi_N+c)\\
        &=\frac{1}{r_j}a_{n[N]}\sin(\th_{j+n}-\th_j)=\frac{1}{R}a_{n[N]}\sin(\phi_n).
    \end{align*}

    Thus the lower-left block is also circulant and generated by the row
    $$\overline{d_{r,\theta}}=-\frac{1}{R}\left(\omega, -a_1\sin(\phi_1),\cdots -a_{N-1}\sin(\phi_{N-1}) \right)=-\frac{1}{R^2}\overline{d_{\theta,r}}$$
    which is the same signed, directed Laplacian as the upper right block, with a prefactor of $-\frac{1}{R^2}$.

    Finally, the lower-right block will be reminiscent of the Kuramoto Jacobian computed in Equations \eqref{eq:JacKura1}-\eqref{eq:JacKura2}. We start again computing the main diagonal:
    \begin{align*}
        (D_{\theta,\theta})_{j,j}&=\partial_{\th_j}(F_{\theta})_j(R,\cdots,R,\phi_1+c,\cdots,\phi_N+c)=-\sum_{l=1}^N a_{(l-j)[N]}\frac{r_l}{r_j}\cos(\th_l-\th_j)=-\rho
    \end{align*}
    and the off-diagonal elements:
    \begin{align*}
        (D_{\theta,\theta})_{j,(j+n)\langle N\rangle}&=\partial_{\th_{(j+n)\langle N\rangle}}(F_{\theta})_j(R,\cdots,R,\phi_1+c,\cdots,\phi_N+c)\\
        &=a_{n[N]}\frac{r_{n\langle N\rangle}}{r_j}\cos(\th_{j+n}-\th_j)=a_{n[N]}\cos(\phi_n)
    \end{align*}
    
    Therefore, the bottom-right matrix is also circulant with generating row
    \[\overline{d_{\theta,\theta}}=\left(-\rho, a_1\cos(\phi_1),\ldots,a_{N-1}\cos(\phi_{N-1})\right)\]
    which is also a signed, directed Laplacian and the exact matrix found in the Kuramoto case.\\

    Now, as each block is itself a circulant matrix, we can compute the eigenvalues and eigenvectors of each block and thus perform a block-by-block diagonalization. 
    
    First, let us define $F_N$ as the transition matrix for the eigenvectors of a circulant matrix:
\[
F_N=(w_0 \ \dots \ w_{N-1})
\]
where $(w_k)_{k\in\indices}$ represent the normalized eigenvectors of a circulant matrix, defined in \eqref{circ-eigenv}. Note that $F_N$ is a unitary matrix since the eigenvectors form an orthonormal basis of $\R^N$.
Then, it holds
\[
J = \begin{pmatrix}
F_N \L_{rr}F_N^* & F_N \L_{\theta r}F_N^*\\
F_N \L_{r\theta}F_N^* & F_N \L_{\theta\theta}F_N^*
\end{pmatrix},
\]
where each $\L_{jl}$ for $j,l\in\{r,\theta\}$ is the diagonal matrix of eigenvalues of each of the four blocks.

We define $P$ as the two-block diagonal matrix $P=I_2 \otimes F_N$ for $\otimes$, the Kronecker product and  $I_2$ the $(2\times 2)$ identity matrix. As $P$ is constructed from two unitary blocks, it is also unitary and therefore the following matrix has the same eigenvalues as $J$:
\[
P^*JP = \begin{pmatrix}
 \L_{rr} &  \L_{\theta r}\\
 \L_{r\theta} & \L_{\theta\theta}
\end{pmatrix}.
\]
Each block is now an $(N \times N)$ diagonal matrix, where each block's eigenvalues can be computed directly using \eqref{circ-eigenv} and \eqref{circ-eigenvalues}. For each $k\in\{0,\cdots,N-1\}$,
\begin{align*}
    (\Lambda_{rr})_{k}&=\lambda_k^{rr}=\frac{-2R^2}{\tau \beta^2}+\sum_{l=1}^{N-1}a_l\cos(\phi_l)\left(e^{ik\phi_l}-1\right),\\
   \frac{1}{R}(\Lambda_{\theta r})_{k}&=\frac{1}{R}\lambda_k^{\theta r}=-R\lambda_k^{r\th}=-R(\L_{r\th})_k=\sum_{l=1}^Na_l\sin(\phi_l)\left(1-e^{ik\phi_l}\right),\\
    (\Lambda_{\theta\theta})_{k}=&\lambda_{k}^{\theta\theta}=\sum_{l=1}^{N-1}a_l\cos(\phi_l)\left(e^{ik\phi_l}-1\right).
\end{align*}
Thus we can relabel indices to acquire a block diagonal matrix of $N$ different $(2 \times 2)$ matrices:
\[
M_k=\begin{pmatrix}
    \l^{rr}_{k} & \l^{\theta r}_{k} \\
    \l^{r\theta}_k & \l^{\theta\theta}_{k}
\end{pmatrix}, \ \ \ k\in\{0,\cdots,N-1\}.
\]
The first submatrix $M_0$ is given by
\[
M_0=\begin{pmatrix}
   \frac{-2R^2}{\tau\beta^2} & 0\\
   0 & 0
\end{pmatrix}
\]
which has eigenvalues $\l_1(M_0)=0$ and $\l_2(M_0)=\frac{-2R^2}{\tau\beta^2}<0$. The first eigenvalue is zero due to the rotational twisted state producing a one parameter family of solutions parameterized by $c\in\mathbb{S}$. The second eigenvalue of $M_0$ is always negative and in particular its distance from zero is modulated by the size of $\tau$.\\

The rest of the eigenvalues of $J$ are computed by solving the characteristic equations originating from the matrices
\[
    M_k=\begin{pmatrix}
        \lambda_k^{rr} & \lambda_k^{\theta r}\\
        \lambda_k^{r\theta} & \lambda_k^{\theta \theta}
    \end{pmatrix}, \ \ \ k\in\{1,\cdots, N-1\}.
\]
Thus they are given by
\begin{align}
    \lambda_{1,2}(M_k)=\frac{1}{2}(\lambda_k^{rr}+\lambda_k^{\theta\theta})\pm\frac{1}{2}\sqrt{(\lambda_k^{rr}+\lambda_k^{\theta\theta})^2-4(\lambda_k^{rr}\lambda_k^{\theta\theta}-\lambda_k^{r\theta}\lambda_k^{\theta r})}, \ \ k\in\{1,\cdots, N-1\}.\nonumber
\end{align}
If $\Re(\lambda_{1,2}(M_k))<0$ for all $k\in\{1,\cdots,N-1\}$, then the circulant structure is linearly stable. Note now that $\rho, \omega, \phi_j$ are independent of $\tau$, but $R$ depends on $\tau$ as given in \eqref{ms2}. Hence the dependence on $\tau$ is found only in the term $\lambda_k^{RR}$ which can be rewritten as
\begin{align*}
    \lambda_{k}^{rr}=\frac{-2R^2}{\tau\beta^2}+\lambda_{k}^{\theta\theta}.
\end{align*}
Note that as in the Kuramoto case, the cone condition $\bar{a}\in\mathcal{C}_n$ exactly guarantees that 
\begin{align*}
    \Re(\lambda_{k}^{\theta\theta})<0, \ \ k\in \{1,\cdots, N-1\}.
\end{align*}
Further, there is an implicit dependence on $\tau$ found in each of $\lambda_{k}^{r\theta}$ and $\lambda_{k}^{\theta r}$ as they each depend on $R$. However, since they only appear multiplied together, the term $\lambda_{k}^{r\theta}\lambda_{k}^{\theta r}$ cancels the dependence in $R$, and hence is a bounded complex value independent of $\tau$. Also note that according to the expression \eqref{ms2} for $R(\tau)=\beta\sqrt{\tau(\frac{1}{\tau}-d+\rho)}$, it holds 
\begin{align*}
    \lambda_{1,2}(M_k)=d-\rho-\frac{1}{\tau}+\lambda_{k}^{\theta \theta}\pm \sqrt{\left(d-\rho-\frac{1}{\tau}\right)^2+\lambda_k^{r\theta}\lambda_k^{\theta r}}.
\end{align*}
Thus, achieving stability is guaranteed by the following geometric lemma:
\begin{Lemma}\label{Lem:technical}
    Let $z_1=-x+iy$ for $x>0$, and $y\in \mathbb{R}$, and let $z=z_1 + \sqrt{(z_1-z_2)^2+z_3}$ with $\mathrm{Re}(z_2)<0$, and $z_2, z_3 \in \mathbb{C}$ fixed. If 
    \begin{align*}
        x>\frac{1}{-2\mathrm{Re}(z_2)}(\mathrm{Re}(z_2)^2+(y-\mathrm{Im}(z_2))^2+|z_3|),
    \end{align*}
    then $\mathrm{Re}(z)<0$.
\end{Lemma}
\begin{proof}[Proof of Lemma \ref{Lem:technical}].
    Letting $w=(z_1-z_2)^2+z_3$ we need to show $|w|<x^2$. Using the triangle inequality,
    \begin{align*}
        |w|\leq |z_1-z_2|^2+|z_3|=(x+\mathrm{Re}(z_2))^2+(y-\mathrm{Im}(z_2))^2+|z_3|.
    \end{align*}
    Setting the above strictly less than $x^2$ yields the inequality.
\end{proof}
Since the cone condition guarantees $\Re(\lambda_{k}^{\theta\theta})<0$ and we supposed $\frac{1}{\tau}>d-\rho$, setting $z_1^k=d-\rho-\frac{1}{\tau}+\lambda_{k}^{\theta \theta}$ implies $\mathrm{Re}(z_1^k)<0$.
Using that $\lambda_{k}^{\theta\theta},\lambda_{k}^{r\theta}\lambda_{k}^{\theta r}$ are independent of $\tau$, and setting $z_2^k=\lambda_k^{\theta\theta}$ (so then $\Im(z_2^k)=y$), and $z_3^k=\lambda_{k}^{r\theta}\lambda_{k}^{\theta r}$, we get a sufficient bound: if

\begin{align*}
    \frac{1}{\tau}>\max_{k=1,...N-1}\left(-\frac{|\l_k^{r\th}\l_k^{\th r}|}{2\Re(\l_k^{\th\th})}+\frac12\Re(\l_k^{\th\th})+d-\rho\right),
\end{align*}
then the rotational twisted state is stable. Note that if the maximum is nonpositive, then this holds for all $\tau$ up to the critical coupling $\tau^*$. Let $S$ be the set of indices such that $\left(-\frac{|\l_k^{r\th}\l_k^{\th r}|}{2\Re(\l_k^{\th\th})}+\frac12\Re(\l_k^{\th\th})+d-\rho\right)>0$. Then 
since the above condition is only sufficient, this implies the existence of a threshold 
\[\tau^{**}\geq \min_{k\in S}\left(-\frac{|\l_k^{r\th}\l_k^{\th r}|}{2\Re(\l_k^{\th\th})}+\frac12\Re(\l_k^{\th\th})+d-\rho\right)^{-1}\] such that for all $\tau<\tau^{**}$, the rotational twisted state is linearly stable. Moreover, note that in the case $d-\rho>0$, it should necessarily hold $\tau^{**}<\tau^*=(d-\rho)^{-1}$ in order to have existence of the rotational twisted state.
\end{proof}

\subsection{Perturbation of the Hopf-parameter}
 The Hopf-parameter $\beta$ serves as a target amplitude for an uncoupled Stuart-Landau oscillator, while in the context of flocking dynamics~\eqref{eq:fishmodel} it represents a target velocity for an agent. It is known that when this parameter is replaced by a family of heterogeneous parameters $(\beta_j)_{j\in\indices}$ depending on the labels, 
the model~\eqref{eq:fishmodel} with $c_{\mathrm{AR}}=0$ fails to flock as the alignment mechanism is not powerful enough to overcome the mismatch in velocities \cite{LRS}. However, within the context of first-order synchronization models, this type of heterogeneity has been shown to facilitate synchronous behavior~\cite{millan2025synchronizationcoupledstuartlandauoscillators}. In order to investigate whether the \textit{rotational twisted state} can survive heterogeneity in $(\beta_j)_{j\in\indices}$, we introduce a linear perturbation of $\beta$ by taking $\varepsilon>0$ and $(\eta_j)_{j\in\{1,\cdots,N\}}\in\R^N$ such that $\beta_j(\varepsilon) = \beta+\varepsilon\eta_j$ for all $j\in\{1,\cdots,N\}$, and study the existence and stability of a \textit{heterogeneous} rotational twisted state for small values of $\varepsilon$, solution to:
\begin{align}\tag{SL-H}\label{eq:STheterogeneous}
    \ddt z_j=\sum_{l=1}^NA_{jl}(z_l-z_j)+\frac{1}{\tau}\left(1-\frac{|z_j|^2}{\beta_j^2}\right)z_j, \ \ \ z_j\in \mathbb{C}, \ \ j\in\indices.
\end{align}

To this end, we reintroduce the difference variables $\Th_j$ defined in \eqref{eq:Th_j}. Note that the stability of the fixed point $(r_1,\cdots,r_N,\th_1,\cdots,\th_N)=(R,\cdots,R,\phi_1+c,\cdots,\phi_N+c)$ is equivalent to the stability of the fixed point for the variables $(r_1,\cdots,r_N,\Th_1,\cdots,\Th_N)=(R,\cdots,R,\frac{2\pi}{N},\cdots,\frac{2\pi}{N})$. We take advantage of this fact to prove the persistence of the rotational twisted states in the one-neighbor circulant matrix case.

\begin{Theorem}
Let $\alpha\in\R$ and let $A$ be the one-neighbor circulant matrix generated by the vector $a=(0,\alpha,0\cdots,0)$.
Let $(\eta_j)_{j\in\{1,\cdots,N\}}\in\R^N$, $\varepsilon>0$ and for all $j\in\{1,\cdots,N\}$, $\beta_j = \beta+\varepsilon\eta_j$.
Then there exists $\delta>0$ such that for all $\varepsilon\in (-\delta,\delta)$, there exists a heterogeneous rotational twisted state solution to \eqref{eq:STheterogeneous}, defined by 
\begin{align*}
        z_j(t)=R_j e^{i\left(\tilde\omega t+\tilde\phi_j+\tilde\theta\right)},
    \end{align*}
    for some $\tilde{\omega}\in\R$, $(R_j)_{j\in\{1,\cdots,N\}}\in(\R_+)^N$, $(\tilde{\phi})_{j\in\{1,\cdots,N\}}\in [0,2\pi]^N$, and $\tilde\theta\in \S$.
Moreover, if $\alpha>0$, there exists $\tilde\delta\leq \delta$ such that if $\varepsilon\in (-\tilde\delta,\tilde\delta)$, then this solution is stable.
\end{Theorem}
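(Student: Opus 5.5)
The plan is to obtain the heterogeneous state from the homogeneous one of Proposition \ref{Prop:ExistenceMillingSL} by the implicit function theorem in $\varepsilon$, working in the co-rotating, reduced variables $(r_1,\dots,r_N,\Th_1,\dots,\Th_N,\tilde\omega)$. I assume $\tau<\tau^*$ so that $R>0$; the case $\alpha=0$ is trivial, since the oscillators decouple and $r_j=\beta_j$, $\tilde\omega=0$ works.

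For the one-neighbour matrix, $A_{j,j+1}=\alpha$ (indices mod $N$), a heterogeneous rotational twisted state is exactly a zero of the map $G(r,\Th,\tilde\omega;\varepsilon)$ with components
\begin{align*}
G^r_j&=\alpha\big(\cos(\Th_j)\,r_{j+1}-r_j\big)+\frac{1}{\tau}\Big(1-\frac{r_j^2}{(\beta+\varepsilon\eta_j)^2}\Big)r_j,\\
G^\theta_j&=\alpha\frac{r_{j+1}}{r_j}\sin(\Th_j)-\tilde\omega,
\end{align*}
subject to the closure constraint $\sum_j\Th_j=2\pi$. Indeed, these equations say that every radius is stationary and every phase rotates at the common speed $\tilde\omega$. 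I would use the constraint to eliminate $\Th_N$. This leaves $N+(N-1)+1=2N$ unknowns for $2N$ equations.

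At $\varepsilon=0$ the point $(R,\dots,R,\tfrac{2\pi}{N},\dots,\tfrac{2\pi}{N},\omega)$, with $\omega=\alpha\sin\phi_1$, is a zero by Proposition \ref{Prop:ExistenceMillingSL}. The map $G$ is smooth in $\varepsilon$ and in all unknowns near this point, since $R>0$. So the only task for existence is to show that the Jacobian in the unknowns is invertible.

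This invertibility is the main obstacle, and I would attack it by reusing the block-circulant computation from the proof of Theorem \ref{t:SLstab}. The derivatives of $(G^r,G^\theta)$ with respect to $(r,\theta)$ form exactly the $2N\times 2N$ matrix $J$ there. After the Fourier change of basis $P=I_2\otimes F_N$, $J$ splits into the $2\times2$ blocks $M_k$. Two features then handle the $k=0$ mode. The $\tilde\omega$-column, which is $-(0,\dots,0,1,\dots,1)$, spans precisely the $k=0$ phase direction that $M_0$ misses. The passage from $\theta$ to $\Th$ with the closure constraint removes the $k=0$ kernel, which is the rigid rotation. So invertibility reduces to $\det M_k\neq0$ for $k=1,\dots,N-1$.

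Write $c=\cos\phi_1$, $s=\sin\phi_1$ and $u_k=1-e^{ik\phi_1}\neq0$. The eigenvalues of the blocks computed there become $\lambda^{\theta\theta}_k=-\alpha c u_k$, $\lambda^{rr}_k=-\frac{2R^2}{\tau\beta^2}-\alpha c u_k$ and $\lambda^{r\theta}_k\lambda^{\theta r}_k=-\alpha^2s^2u_k^2$. Hence
\[
\det M_k=\alpha u_k\Big(\frac{2cR^2}{\tau\beta^2}+\alpha u_k\Big).
\]
Its imaginary part is nonzero unless $k\phi_1=\pi$. In that case $N$ is even, $u_k=2$, and the real part is $\frac{2cR^2}{\tau\beta^2}+2\alpha$. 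This is positive when $\alpha>0$ and $N\ge5$, since then $c>0$. In general it vanishes only on an exceptional parameter set, which I would exclude or treat as a nondegeneracy hypothesis. The implicit function theorem then gives $\delta>0$ and a smooth branch $(R_j,\Th_j,\tilde\omega)(\varepsilon)$. Setting $\tilde\phi_j=\sum_{l<j}\Th_l$ recovers the claimed form.

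For stability when $\alpha>0$, the first step is the unperturbed state. The one-neighbour vector satisfies the cone condition \eqref{cone-condition}, since $(W\bar a)_k=\alpha c(1-\cos k\phi_1)>0$ for $N\ge5$. I would first try to show directly that both roots of each $M_k$ have negative real part for every $\tau<\tau^*$. The explicit trace $-\frac{2R^2}{\tau\beta^2}-2\alpha c u_k$ and determinant above allow a Routh--Hurwitz-type criterion for complex quadratics. If that fails, I would fall back on Theorem \ref{t:SLstab} with $\tau\le\tau^{**}$.

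For the perturbed state, the linearization in the reduced variables $(r,\Th)$ depends continuously on $\varepsilon$. The rotational zero eigenvalue is exactly removed by the reduction, or equivalently persists exactly by phase-shift invariance. All remaining eigenvalues are strictly negative at $\varepsilon=0$, so they stay in the open left half-plane for $|\varepsilon|<\tilde\delta$ by continuity of the spectrum. This yields linear stability of the heterogeneous rotational twisted state.
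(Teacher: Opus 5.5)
Your proposal is correct, but it proves the invertibility of the Jacobian by a different route from the paper's.

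\textbf{The paper's route.} The paper keeps all $N$ differences $\Theta_j$, together with $\omega$ and the closure row. This gives a $(2N+1)\times(2N+1)$ Jacobian. It then takes the Schur complement with respect to the $(\Theta,\omega)$ block $D$. That reduces everything to one real $N\times N$ circulant, whose spectrum it signs by a Laplacian/Perron--Frobenius argument.

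\textbf{Your route.} You stay in the $(r,\theta)$ frame and reuse the Fourier blocks $M_k$ of Theorem \ref{t:SLstab}. You handle the $k=0$ mode structurally: the $\tilde\omega$-column fills the missing range direction, and the quotient by rigid rotations removes the kernel. Everything then hinges on your explicit formula $\det M_k=\alpha u_k\big(\frac{2R^2\cos\phi_1}{\tau\beta^2}+\alpha u_k\big)$.

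\textbf{The two tests agree.} Since $D_{r,\Theta}$ has zero column sums, the averaging part of $D^{-1}$ acts trivially. The Schur complement is then circulant with eigenvalues $-\frac{1}{R\cos\phi_1}\big(\frac{2R^2\cos\phi_1}{\tau\beta^2}+\alpha u_k\big)$, which vanish exactly when your $\det M_k$ does.

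\textbf{What each buys.} The paper's route stays in real arithmetic and in the variables $(r,\Theta,\omega)$ that parametrize the perturbed state. However, it needs $D$ invertible, i.e.\ $\alpha\cos(2\pi/N)\neq0$, so it is unavailable at $N=4$, which your formula covers. Its sign argument also needs the off-diagonal coefficient, which has the sign of $\alpha\cos(2\pi/N)$, to be positive. Your formula makes the degeneracy visible. For $\alpha<0$ and $N$ even, the Jacobian is genuinely singular when $\frac{2R^2\cos\phi_1}{\tau\beta^2}+2\alpha=0$; for example $N=6$, $\alpha=-1$, $\tau=2/3$. So your nondegeneracy caveat is warranted, and the paper's implicit-function argument for general $\alpha\in\R$ tacitly needs it too.

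\textbf{Stability.} Your first idea, proving stability for every $\tau<\tau^*$, cannot work. As $\tau\uparrow\tau^*$ one has $R\to0$, and the characteristic polynomial of $M_1$ tends to $(\lambda+\alpha u_1\cos\phi_1)^2+\alpha^2u_1^2\sin^2\phi_1$. This has the root $\alpha(1-e^{-i\phi_1})$, whose real part is positive when $\alpha>0$. So the fallback $\tau\le\tau^{**}$ through Theorem \ref{t:SLstab} is necessary. It is exactly the unstated hypothesis on which the paper's continuity argument also rests.
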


\begin{proof}
 Let 
 \begin{align*}
     F :\R^{2N+2} & \to \R^{2N+1}\\
       (r_1,\cdots,r_N,\Theta_1,\cdots,\Theta_N,\omega,\varepsilon) & \mapsto F(r_1,\cdots,r_N,\Theta_1,\cdots,\Theta_N,\omega,\varepsilon)
       \end{align*}
 defined for each $j\in\{1,\cdots,N\}$ by 
 \begin{align*}
 F_j(r,\Theta,\omega,\varepsilon) = & \frac{1}{\tau}(1-\frac{r_j^2}{\beta_j^2(\varepsilon)})+ \sum_{l=1}^N a_{(l-j)[N]}\left(\frac{r_l}{r_j} \cos(\sum_{k=0}^{l-j}\Theta_{(j+k)[N]})-1\right) = (F_r)_j,\\
 F_{N+j}(r,\Theta,\omega,\varepsilon) =& \sum_{l=1}^N a_{(l-j)[N]} \frac{r_l}{r_j} \sin(\sum_{k=0}^{l-i}\Theta_{(j+k)[N]})-\omega,\\
F_{2N+1}(r,\Theta,\omega,\varepsilon) =& \sum_{k=1}^N \Theta_k \ \mod 2\pi. 
\end{align*}
 From Proposition \ref{Prop:ExistenceMillingSL}, $F(R,\cdots,R,\frac{2\pi}{N},\cdots,\frac{2\pi}{N},\bar\omega,0) = 0$, where $R$ is defined in \eqref{ms1} and $\bar\omega = \sum_{l=1}^{N-1} a_l \sin(\frac{2\pi l}{N})$ is defined in \eqref{eq:omega}.
 
 From the Implicit Function Theorem, if $J:=\nabla_{(r,\Theta,\omega)}F(R,\cdots,R,\frac{2\pi}{N},\cdots,\frac{2\pi}{N},\bar\omega,0)$ is invertible, then there exists an interval $(-\delta,\delta)$ and a function $g:(-\delta,\delta)\to \R^{2N+1}$ such that for all $\varepsilon \in (-\delta,\delta)$,
 \[ F(g_1(\varepsilon),\cdots,g_{2N+1}(\varepsilon),\varepsilon) = 0. 
 \]
 Moreover, 
 \[
 \left( \frac{d g_i}{d\varepsilon}\right)_{i\in\{1,\cdots,2N+1\}} = - \left(\nabla_{(r,\Theta,\omega)}F(g_1(\varepsilon),\cdots,g_{2N+1}(\varepsilon),\varepsilon)\right)^{-1}\nabla_\varepsilon F(g_1(\varepsilon),\cdots,g_{2N+1}(\varepsilon),\varepsilon),
 \]
 which allows to compute $g$ numerically on a small interval around $0$.
 
 In the case of just one neighbor, $a_1 = \alpha $ for some $\alpha\in\R$, and $a_j=0$ for $j\neq 1$.
 We compute $\nabla_{(r,\Theta,\omega)}F$ and get, for all $j\in\{1,\cdots,N\}$: 
 \[
 \begin{aligned}
\partial_{r_j} F_j & = -\frac{2r_j}{\tau\beta_j^2(\varepsilon)} -\frac{\alpha  r_{j+1}}{r_j^2}\cos(\Th_j) \\
\partial_{r_{j+1}} F_j & = \frac{\alpha }{r_j} \cos(\Theta_j) \\
\partial_{\Th_j} F_{j} & = -\alpha \frac{r_{j+1}}{r_{j}}\sin(\Th_j) \\
\partial_{r_j} F_{N+j} & = -\alpha \frac{r_{j+1}}{r_j^2}\sin(\Th_j)\\
\partial_{r_{j+1}} F_{N+j} & =  \frac{\alpha }{r_j} \sin(\Theta_j)\\
\partial_{\Th_j} F_{N+j} & =  -\alpha  \frac{r_{j+1}}{r_j}\cos(\Theta_j)\\
\partial_{\omega} F_{N+j} & = -1\\
\partial_{\Th_j} F_{2N+1} & =  1.
\end{aligned}
\]
Evaluated at $(R,\cdots,R,\frac{2\pi}{N},\cdots,\frac{2\pi}{N},\bar\omega,0)$, we get the following $(2N+1) \times (2N+1)$ block matrix:

    
    \[
    J=\left(\begin{array}{ccc|ccc|c}
    & & & & & & 0 \\
      & D_{r,r} & & & D_{\Theta,r} & & \vdots \\
    & & & & & & 0 \\  
         \hline
    & & & & & & -1\\
     &  D_{r,\Theta} & & & D_{\Theta, \Theta} & & \vdots \\
     & & & & & & -1 \\
       \hline
     0 & \cdots & 0 & 1 & \cdots & 1 & 0
    \end{array}\right):=\left(\begin{array}{c|c}
       A  & B \\
         \hline
       C  & D \\
    \end{array}\right)
    \]
where for each $m,n\in\{r,\Theta\}$, $D_{mn}$ is an $N \times N$ block and the final column is given by $(0, \cdots,0,-1,\cdots,-1,0)$.  Analogously, the final row is given by $(0,\cdots,0,1,\cdots,1,0)$. To show invertibility of this matrix, we resort to the Schur complement which yields for the four block matrix
\begin{align*}
    \det(J)=\det(D)\det(A-BD^{-1}C).
\end{align*}
In this case we take $A=D_{r,r}$ which is an $N \times N$ circulant matrix generated by the row 
\[
\begin{pmatrix}
    -\frac{2R}{\tau\beta^2}-\frac{\alpha }{R}\cos\left(\frac{2\pi}{N}\right), & \frac{\alpha }{R}\cos\left(\frac{2\pi}{N}\right), & 0 & \cdots & 0
\end{pmatrix}.
\]

The submatrix $B$ is then given by the $N \times (N+1)$ matrix which has the first $N$ columns given by $D_{\Th,r}=-\alpha \sin\left(\frac{2\pi}{N}\right)I_N$ and the final column being all zeros. Subsequently, $C$ is the $(N+1) \times N$ matrix with the final row being all zeros and the first $N$ rows provided by $D_{r,\Th}$, which is a circulant matrix generated by the row
\[
\begin{pmatrix}
    -\frac{\alpha}{R}\sin\left(\frac{2\pi}{N}\right), & \frac{\alpha}{R}\sin\left(\frac{2\pi}{N}\right), & 0 & \cdots & 0
\end{pmatrix}.
\]
Finally, $D$ is the remaining $(N+1) \times (N+1)$ matrix with $D_{\Th,\Th}=\alpha\cos\left(\frac{2\pi}{N}\right)I_N$ giving the upper left $N\times N$ block, the upper right block given by the length $N$ column vector of $-1$'s, and the lower left the length $N$ row vector of $1$'s, and the final element being $0$.

In light of the Schur decomposition, if $D$ and $A-BD^{-1}C$ are invertible, then  $J$ is invertible. Now, proving the invertibility of $A-BD^{-1}C$ implies the need to explicitly compute this matrix, and hence the need to invert the $(N+1)\times (N+1)$ matrix $D$. Luckily, with patience, one can compute this via Gaussian elimination. After performing the requisite linear algebra operations, the resulting matrix $A-BD^{-1}C$ is another $N \times N$ circulant matrix generated by the row
\[
\begin{pmatrix}
-\frac{2R^2}{\tau \beta^2}-\frac{\alpha}{R}\left(\cos\left(\frac{2\pi}{N}\right)+\left(1-\frac{1}{N}\right)\frac{\sin^2\left(\frac{2\pi}{N}\right)}{\cos\left(\frac{2\pi}{N}\right)}\right), & \frac{\alpha}{R}\left(\cos\left(\frac{2\pi}{N}\right)+\left(1-\frac{1}{N}\right)\frac{\sin^2\left(\frac{2\pi}{N}\right)}{\cos\left(\frac{2\pi}{N}\right)}\right), & 0, & \dots & 0 
\end{pmatrix},
\]
which is once again a diagonally dominated negative Laplacian matrix for which the Perron-Frobenius theory provides strictly negative eigenvalues. Hence the Jacobian is invertible and the Implicit Function Theorem provides a unique perturbed \textit{rotational twisted state} for all $\e \in (-\delta, \delta)$.

The stability of such perturbed states (for a potentially smaller interval $(-\tilde\delta,\tilde\delta)$) is inherited from the stability of the homogenous $\beta$ case due to the fact that the eigenvalues of the linearization computed in Section \ref{s:4} evolve continuously with respect to the perturbation.
\end{proof}

The resulting perturbed \textit{rotational states} leads to each oscillator rotating at a different radius, however with a common rotation speed $\tilde\omega(\varepsilon)$. This result is of particular interest to the future application to the second-order model \eqref{eq:fishmodel}. If these results carry over to this setting, then it would imply that with heterogeneous desired speeds within the group, schooling behavior would remain impossible, but that the diameter of the group may not actually diverge, and milling phenomena could emerge instead. We provide numerical simulations for the $\beta$-perturbed first-order model in Section \ref{s:7}.

\section{Full Synchronization}\label{s:5}
Results thus far have been related to the \textit{rotational twisted states}, however for the Stuart-Landau system we saw that even for circulant matrices, when $d-\rho>0$, such states cease to exist when $\tau>\tau^*=(d-\rho)^{-1}$. 
\subsection{Unconditional Synchronization} 
In \cite{millan2025synchronizationcoupledstuartlandauoscillators}, analogous to the Kuramoto theory, full synchronziation results were provided in the complete-graph case as long as initial data is confined to a half plane. However, we can provide a stronger result here. In this section we provide an unconditional synchronization result for general positive graphs satisfying $A_{ij}> 0$ for all $i\neq j\in\indices$, with identical Hopf-parameters $\beta_j\equiv \beta>0$, according to the following equation:
\begin{align}\tag{SL}\label{3rddeq}
    \ddt z_j=\sum_{l=1}^NA_{jl}(z_l-z_j)+\frac{1}{\tau}\left(1-\frac{|z_j|^2}{\beta^2}\right)z_j, \ \ \ z_j\in \mathbb{C}, \ \ j\in\indices.
\end{align}

\begin{Theorem}
    Let $A\in M_N(\mathbb{R}_+)$ be a general matrix with positive entries. Let $m=\min_{j\neq k} A_{jk}>0$ and suppose $m>\frac{1}{\tau}$. Then the system \eqref{3rddeq} converges to a fully synchronized state so that $$ \max_{j,k\in\indices}|z_j-z_k|(t)\leq Ce^{-(m-\frac{1}{\tau})t}.$$
\end{Theorem}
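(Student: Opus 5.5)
We will prove the synchronization estimate by a Lyapunov argument on the diameter of the configuration,
\[
\mathcal{D}(t):=\max_{j,k\in\indices}|z_j(t)-z_k(t)|.
\]
Throughout, $u\cdot v=\Re(u\bar v)$ denotes the real inner product on $\C\simeq\R^2$. As in the proof of the uniform bound on $|z_j|$ earlier in Section~\ref{s:4}, Danskin's theorem shows that $\mathcal{D}^2$ is absolutely continuous. Moreover, for almost every $t$,
\[
\ddt \mathcal{D}^2(t)=\max_{(j,k)\in\Omega(t)}\ddt|z_j-z_k|^2,
\]
where $\Omega(t)$ is the set of extremal pairs. It therefore suffices to fix an extremal pair $(j,k)$ at time $t$ and bound
\[
\ddt|z_j-z_k|^2=2(z_j-z_k)\cdot(\dot z_j-\dot z_k).
\]
We split $\dot z_j-\dot z_k$ into its consensus part and its Stuart--Landau part $f(z_j)-f(z_k)$, where $f(z):=\frac1\tau\bigl(1-\frac{|z|^2}{\beta^2}\bigr)z$.

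\textbf{Consensus part.} Extremality of $(j,k)$ controls the sign of every neighbour term. For any $l$ we have $|z_l-z_k|\le\mathcal{D}=|z_j-z_k|$. Expanding
\[
|z_l-z_k|^2=|z_l-z_j|^2+2(z_l-z_j)\cdot(z_j-z_k)+\mathcal{D}^2
\]
gives
\[
(z_l-z_j)\cdot(z_j-z_k)\le-\tfrac12|z_l-z_j|^2\le0.
\]
Symmetrically, $-(z_l-z_k)\cdot(z_j-z_k)\le0$. Since $A_{jl}\ge0$, every term in
\[
\sum_l A_{jl}(z_l-z_j)\cdot(z_j-z_k)-\sum_l A_{kl}(z_l-z_k)\cdot(z_j-z_k)
\]
is nonpositive. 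Keeping only the terms $l=k$ in the first sum and $l=j$ in the second, each of which equals $-A_{jk}\mathcal{D}^2$ resp. $-A_{kj}\mathcal{D}^2$, we obtain
\[
\text{consensus part}\le-(A_{jk}+A_{kj})\mathcal{D}^2\le-2m\mathcal{D}^2.
\]
This is the only place where positivity of all off-diagonal entries is needed.

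\textbf{Nonlinear part.} We write
\[
(f(z_j)-f(z_k))\cdot(z_j-z_k)=\frac1\tau|z_j-z_k|^2-\frac{1}{\tau\beta^2}\bigl(|z_j|^2z_j-|z_k|^2z_k\bigr)\cdot(z_j-z_k).
\]
The map $z\mapsto|z|^2z$ is the gradient of the convex function $\frac14|z|^4$ on $\R^2$, hence monotone, so the last term is nonpositive. The nonlinear part is therefore bounded by $\frac1\tau\mathcal{D}^2$.

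\textbf{Conclusion.} Combining the two parts,
\[
\ddt\mathcal{D}^2\le2\Bigl(\frac1\tau-2m\Bigr)\mathcal{D}^2\le-2\Bigl(m-\frac1\tau\Bigr)\mathcal{D}^2
\quad\text{for a.e. } t,
\]
using $m>0$. Gr\"onwall's lemma then yields
\[
\mathcal{D}(t)\le\mathcal{D}(0)\,e^{-(m-\frac1\tau)t},
\]
so one may take $C=\mathcal{D}(0)$. This bound is in fact slightly stronger than what is claimed. It shows that all pairwise differences vanish, i.e. convergence to the synchronized state in the sense of Definition~\ref{sync}.

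The main point requiring care is the nonsmooth step. One must check that the Danskin/maximal-pair argument is legitimate along absolutely continuous trajectories. This holds because solutions exist globally and remain bounded, by the earlier a priori bound. One must also check that the sign estimate for the neighbour terms uses only extremality of the pair and not any ordering of the other oscillators. The monotonicity of the cubic term is the key structural observation: it ensures the only destabilizing contribution is the linear growth term $\frac1\tau$, which is exactly what produces the competition $m$ versus $\frac1\tau$ in the statement.
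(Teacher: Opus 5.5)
Your proof is correct and follows essentially the same route as the paper. Both arguments differentiate the diameter along an extremal pair, use extremality to sign the coupling terms, discard the cubic contribution because $|w|^2\mathrm{Id}+2w\otimes w\succeq 0$, and close with Gr\"onwall; your monotonicity of $\nabla\frac14|z|^4$ is precisely the paper's step of dropping the negative semidefinite part of $D_wG(w)$, and working with $\mathcal{D}^2$ over pairs rather than with maximizing triplets $(l,j,k)$ is a slightly cleaner packaging that avoids the paper's loose vector mean value step and even gives the rate $2m-\frac1\tau$.
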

\begin{proof}
    
 Letting $\mathcal{A}(t)=\max_{j,k=1,\ldots,N}|z_j-z_k|$ we can equivalently write $\mathcal{A}(t)=\max_{|l|=1,j,k} l(z_j-z_k)$ for $l$ any unit functional $l:\mathbb{C}\to \mathbb{R}$. Then computing via Rademacher's Lemma and taking $(l,j,k)$ to be a maximizing triplet, we get
    \begin{align*}
        \ddt \mathcal{A}(t) &=\sum_{n=1}^NA_{jn}l(z_n-z_j)-A_{kn}l(z_n-z_k)+\frac{1}{\tau\beta^2}l\left((\beta^2-|z_j|^2)z_j-(\beta^2-|z_k|^2)z_k\right),\\
        &\leq -m\mathcal{A}(t)+\frac{1}{\tau\beta^2}l\left((\beta^2-|z_j|^2)z_j-(\beta^2-|z_k|^2)z_k\right),
    \end{align*}
    Focusing on the second term we define the functional
    \begin{align*}
        G(w)=w(\beta^2-|w|^2) \ \ \text{with} \ \ D_wG(w)=\beta^2\mathrm{Id}-|w|^2\mathrm{Id}-2w\otimes w.
    \end{align*}
    Then for some $w$ on the segment $[z_j,z_k]$
    \begin{align*}
        l\left((\beta^2-|z_j|^2)z_j-(\beta^2-|z_k|^2)z_k\right)=D_wG(w)(z_j-z_k).
    \end{align*}
    Considering $l=\frac{z_j-z_k}{|z_j-z_k|}$ we can dismiss the entire negative definite part of $D_wG$, leaving only $\beta^2\mathrm{Id}$. Therefore
    \begin{align*}
        \ddt \cA(t)\leq -(m-\frac{1}{\tau})\cA
    \end{align*}
    and Gr\"onwall's inequality finishes the proof.
\end{proof}

\begin{Rem}
This fact is rather astounding in that if  $\frac{1}{\tau}$, the relaxation time of the Hopf-bifurcation, is greater than the minimal value in the connectivity matrix, then asymptotic states other than full synchronization can exist. In particular, this is exactly what occurs for small $\tau$ in the example of Corollary \ref{Exabs}, where the fast relaxation time pushes all oscillators towards a common radius, while the balance of the circulant structure stabilizes the splay state, and the assymmetries produce rotations. On the otherhand, if the relaxation time is below the minimal connectivity, then the consensus mechanism dominates and the system is forced into consensus for all initial data, at an exponential rate given by the gap in the minimal connectivity and the relaxation time.\\
\end{Rem}

It should also be stated that the final value of $|z_j|$ was not given by the above proof. In principal, it is possible for all $z_j \to 0$ instead of settling to the chosen amplitude $|z_j^{\infty}|=\beta$.

\subsection{Linear stability of fixed points}
The unconditional synchronization result implies that all $|z_j-z_k| \to 0$ exponentially fast. However, it does not show whether $|z_j|\to \beta$ or $|z_j|\to 0$ which can both be fixed points of the system \eqref{3rddeq}. Here we provide a linear stability analysis of each of these fixed points. Further, we can prove the stability for a wider class of matrices where we no longer require strict positivity of the off-diagonal elements, but rather just nonnegativity.

\begin{Theorem}\label{t:stab}
    Let $A \in M_N(\mathbb{R}_{\geq 0})$ be any nonnegative $N\times N$ connectivity matrix with a directed spanning tree. Then the fixed point $z_j=0$ for all $j=1,\ldots,N$ is linearly unstable while the class of fixed points such that $z_j=z_k$ for all $j,k=1,\ldots, N$ with $|z_j|=\beta$, is linearly stable.
\end{Theorem}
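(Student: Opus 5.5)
We linearize \eqref{3rddeq}, viewing $\C\simeq\R^2$ so that the Jacobian is a real $2N\times 2N$ matrix. Write $L:=D-A$, with $D=\mathrm{diag}(d_j)$ and $d_j=\sum_{l\neq j}A_{jl}$. This is the directed graph Laplacian, and the coupling term is $-Lz$. Since $A$ has a directed spanning tree, we use the standard fact recalled with the Abelson model \eqref{Abelson} (see \cite{PROSKURNIKOV2018166}): $0$ is a simple eigenvalue of $L$ with eigenvector $\mathbf{1}$, and every other eigenvalue $\mu$ of $L$ satisfies $\Re\mu>0$.

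\emph{Origin.} Linearizing $G(w)=\frac{1}{\tau}(1-|w|^2/\beta^2)w$ at $w=0$ gives $\frac{1}{\tau}\mathrm{Id}$, because the cubic term has zero derivative there. The Jacobian at $z=0$ is therefore $(-L+\frac{1}{\tau}I_N)\otimes I_2$, with eigenvalues $-\mu+\frac1\tau$. Taking $\mu=0$ (eigenvector $\mathbf 1$) yields the eigenvalue $\frac1\tau>0$, so the origin is linearly unstable. The unstable direction is the synchronized radial mode.

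\emph{Synchronized state with $|z_j|=\beta$.} By rotational invariance we may take $z_j=\beta$ for all $j$. With the paper's formula $D_wG(w)=\frac{1}{\tau\beta^2}\bigl((\beta^2-|w|^2)\mathrm{Id}-2w\otimes w\bigr)$, at $w=\beta$ (in real coordinates $(x,y)$) this becomes $\frac{1}{\tau}\begin{pmatrix}-2&0\\0&0\end{pmatrix}$. The full Jacobian is then $J=-L\otimes I_2+I_N\otimes \mathrm{diag}(-\frac2\tau,0)$. The radial and tangential blocks decouple: the $x$-components give $-L-\frac2\tau I_N$, and the $y$-components give $-L$. Both commute with the same diagonalization (Jordan form) of $L$. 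The eigenvalues are therefore $-\mu-\frac2\tau$, all with real part at most $-\frac2\tau<0$, together with $-\mu$. The latter have negative real part except for the simple eigenvalue $0$ with eigenvector $\mathbf 1$ in the $y$-block.

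That zero eigenvalue is the tangent direction to the circle of synchronized equilibria $\{z_j=\beta e^{i c}\}$. Since it is simple and corresponds to the continuous family $c\in\S$, we conclude linear stability of this family: it is normally hyperbolic and attracting, and the perturbation orthogonal to the family decays. This mirrors how $\lambda_0=0$ was treated in the proofs of Theorems \ref{t:Kurrotpre} and \ref{t:SLstab}.

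The main point requiring care is the spectral fact about $L$ for a nonnegative matrix with a directed spanning tree: the simplicity of $0$ and $\Re\mu>0$ for all other eigenvalues. We cite it from the consensus literature. If a self-contained argument is wanted, Gershgorin gives $\Re\mu\ge0$, with equality forcing $\mu=0$; simplicity then follows from the spanning-tree rank argument $\mathrm{rank}\,L=N-1$. Non-diagonalizability of $L$ is harmless, since only the spectrum matters for linear stability.
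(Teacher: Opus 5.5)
Your proof is correct and follows the paper's strategy: linearize, then reduce everything to the spectrum of the directed Laplacian $L$. The only difference is a cosmetic change of coordinates. The paper uses Wirtinger variables $(z,\bar z)$ at a general phase $\theta$, where the off-diagonal blocks are $-\frac{1}{\tau}e^{\pm 2i\theta}I_N$, and solves the $2\times 2$ equations $(\lambda_j-\mu)^2-\frac{1}{\tau^2}=0$, with $\lambda_j$ the eigenvalues of $-L-\frac{1}{\tau}I_N$. You instead rotate to $z_j=\beta$ and use real coordinates, where the Jacobian is already $(-L-\frac{2}{\tau}I_N)\oplus(-L)$, so the same eigenvalues $-\mu-\frac{2}{\tau}$ and $-\mu$ appear directly and the radial/tangential splitting is explicit.

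One small precision in your optional self-contained aside: $\mathrm{rank}\,L=N-1$ gives only geometric simplicity of $0$. Algebraic simplicity, which is needed so that the neutral mode carries no Jordan block, also uses that $0$ is semisimple for a Laplacian; this follows, for instance, from boundedness of the row-stochastic semigroup $e^{-tL}$.
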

\begin{proof}
    In order to determine the stability of the zero fixed point, we can no longer use the amplitude-phase representation since phases are not well-defined at zero. Therefore we take advantage of the Wirtinger derivatives in order to compute the linearization. Let
    \begin{align*}
        F_{z_j}=\sum_{l=1}^NA_{jl}(z_l-z_j)+\frac{1}{\tau}\left(1-\frac{|z_j|^2}{\beta^2}\right)z_j, \quad F_{\bar{z}_j}=\sum_{l=1}^NA_{jl}(\bar{z}_l-\bar{z}_j)+\frac{1}{\tau}\left(1-\frac{|z_j|^2}{\beta^2}\right)\bar{z}_j
    \end{align*}
    Then the Jacobian is given once again by 4 $N\times N$ blocks
    \[
    J=\left(\begin{array}{c|c}
       D_{z,z}  & D_{\bar{z},z}  \\
         \hline
       D_{z,\bar{z}}  & D_{\bar{z}, \bar{z}}
    \end{array}\right)
    \]
    The Wirtinger derivatives take each $z_j$ and the complex conjugate $\bar{z}_j$ to be independent variables. This also allows one to avoid the complications that arise from the fact that $|z_j|^2$ is not holomorphic. Now the necessary derivatives are given by
    \begin{align*}
        &\partial_{z_j}F_{z_j}=-d_j+\frac{1}{\tau}\left(1-2\frac{|z_j|^2}{\beta^2}\right),\\
        &\partial_{z_l}F_{z_j}=A_{jl},\\
        &\partial_{\bar{z}_j}F_{z_j}=-\frac{z_j^2}{\tau\beta^2},\\
        &\partial_{\bar{z}_l}F_{z_j}=0
    \end{align*}
    and by symmetry we get the same values for
    \begin{align*}
        &\partial_{\bar{z}_j}F_{\bar{z}_j}=-d_j+\frac{1}{\tau}\left(1-2\frac{|z_j|^2}{\beta^2}\right),\\
        &\partial_{\bar{z}_l}F_{\bar{z}_j}=A_{jl},\\
        &\partial_{z_j}F_{\bar{z}_j}=-\frac{\bar{z}_j^2}{\tau\beta^2},\\
        &\partial_{z_l}F_{\bar{z}_j}=0
    \end{align*}
    Thus the upper left and bottom right blocks are identical. And are given by the negative Laplacian plus a diagonal term. Meanwhile the bottom left and upper right blocks are diagonal.

    First let us check the zero fixed point. The corresponding Jacobian is given by
    \[
    J_0=\left(\begin{array}{c|c}
       D_{z,z}(0)  & D_{\bar{z},z}(0)  \\
         \hline
       D_{z,\bar{z}}(0)  & D_{\bar{z}, \bar{z}}(0)
    \end{array}\right)
    \]
    with 
    $$D_{z,z}(0)=-L(A)+\frac{1}{\tau}\mathrm{I}_N=D_{\bar{z},\bar{z}}(0)$$
    and
    $$D_{\bar{z},z}=D_{z,\bar{z}}=0.$$
    Thus the entire matrix $J$ is a negative Laplacian matrix plus a constant positive diagonal. Hence $\lambda_1=\frac{1}{\tau}>0$  since the first eigenvalue of the Laplacian is zero. Thus the zero fixed point is unstable.\\

    Next to check at the fully synchronized state such that $|z_j|=\beta$ with $z_j=z_k$ for all $j,k=1,\ldots,N$. Then $z_j=\beta e^{i\theta}$ for some $\theta \in \mathbb{S}$.

    \[
    J_{\beta}=\left(\begin{array}{c|c}
       D_{z,z}(\beta)  & D_{\bar{z},z}(\beta)  \\
         \hline
       D_{z,\bar{z}}(\beta)  & D_{\bar{z}, \bar{z}}(\beta)
    \end{array}\right)
    \]
    with 
    $$D_{z,z}(\beta)=-L(A)-\frac{1}{\tau}\mathrm{I}_N=D_{\bar{z},\bar{z}}(0)$$
    and
    $$D_{\bar{z},z}(\beta)=-\frac{1}{\tau}e^{2i\theta}=\overline{D_{z,\bar{z}}(\beta)}.$$
    Since the upper left and lower right blocks are identical and are diagonally dominated Laplacians of the original connectivity matrix, we can diagonalize them with $\lambda_1,\ldots,\lambda_N$ such that $-\frac{1}{\tau}=\lambda_1>\Re(\lambda_j)$ for each $j=2,\ldots, N$. Therefore, just as before, we can compute the eigenvalues of the Jacobian by solving the characteristic equations given by
    \begin{align*}
        (\lambda_j-\mu_j(J_{\beta}))^2-\frac{1}{\tau^2}=0
    \end{align*}
    Thus $$\mu_j(J_{\beta})=\lambda_j\pm\frac{1}{\tau}$$
    which gives $\mu_1(J_{\beta})=0$ and $\Re(\mu_j)(J_{\beta})<0$ for the remaining $2N-1$ eigenvalues. Once again, the zero eigenvalue is due to the fact that the final angle $\theta$ is an emergent property and the class of synchronized solutions with $|z_j|=\beta$ is linearly stable.
\end{proof}
\begin{Rem}
We make note here that if the original matrix $A$ were circulant, then since $\mu_j(J_{\beta})=\lambda_j\pm\frac{1}{\tau}$ where $\lambda_j$ are the eigenvalues of the Laplacian matrix generated by $A$, then the imaginary parts of the eigenvalues $\mu_j$ are identical to those of the original circulant matrix which induced rotations. Therefore via the linearization, we expect rotational dynamcis near the fixed point to remain stable, which we investigate numerically in section \ref{s:7}.
\end{Rem}
\begin{Rem}
    We further note that the stability for the synchronized state could have also been deduced from the fact that the basin of attraction for such a state is at least as large as one half-plane. This was shown for the complete graph case in \cite{millan2025synchronizationcoupledstuartlandauoscillators}, and we note that the relative adjustments to the connected graph case can be made to guarantee convergence to the synchronized state at an exponential rate, regardless of the relationship between the minimal connectivity and the relaxation time. 
\end{Rem}

The instability of the zero fixed point for all $\tau>0$ implies that first in the regime $\tau<\tau^*$ that so long as the initial data is not zero for all oscillators, then amplitude death cannot occur, due to the weak coupling relative to the nonlinear restoring force. While for $\tau\geq \frac{1}{m}$ amplitude death can only occur along some unstable invariant manifold and thus for almost all initial data one can conclude that $|z_j| \to \beta$ and $|z_j-z_k|\to 0$ for all $j,k=1,..,N$.\\

\section{Leader-follower augmented rotating states}\label{s:6}
In this section, we address the question of whether the circulant nature of the interaction matrix is necessary for the existence of a rotational twisted state. 
As a first step towards generalizing the class of matrices able to generate rotational twisted states, we explore the dynamics of incorporating leaders and followers into the graph structure. We assume that the $N$ agents are split into  $N_1$ leaders and $N_2$ followers, such that $N=N_1+N_2$. We suppose that the $N_1$ leaders are interconnected via a circulant adjacency matrix satisfying the cone condition, and hence gives rise to a stable rotational state for $\tau$ small enough. The remaining $N_2$ followers are allowed to receive input from the  $N_1$ leaders, but we assume that they do not influence any other agent. The adjacency matrix would then take the following form:
\[
A=\left(\begin{array}{c|c}
       A_1  & A_2  \\
         \hline
       A_3  & A_4
    \end{array}\right)=\left(\begin{array}{c|c}
       C({N_1})  & 0  \\
         \hline
       A_3  & 0
    \end{array}\right),
\]
where $C(N_1)$ is the circulant graph structure satisfying the cone condition. The submatrix $A_3$ dictates the behavior of the $N_2$ followers. As $A_2=0$, the behavior of the first $N_1$ agents is determined only by the circulant matrix $C(N_1)$, so that depending on the initial data they can converge to the fully synchronized state or to the twisted rotational state. Naturally, if the leaders converge to the fully synchronized state, then as long as the whole adjacency matrix has a directed spanning tree, then the remainder of the $N_2$ agents also synchronize. We are concerned here with the case of the leaders producing a twisted rotational state and seeing whether or not the followers will adopt the same rotational velocity. We begin the investigation once more with the Kuramoto model.

\subsection{Kuramoto Uniform following}
We start with the Kuramoto case in which $\mathcal{N}_1:=\{1,\cdots,N_1\}$ denotes the group of leaders and $\mathcal{N}_2=\{N_1+1,\cdots,N\}$ the group of followers. We further suppose that followers only follow leaders so that $A_2=A_4=0$ and that each follower $k$ will be led by exactly $n_k\in\N$ consecutive leaders, with a coupling strength given by $\frac{\alpha_k}{n_k}\in\R$. In other words, if agent $k$ is a follower and $n_k=3$, then there exists $j \in \mathcal{N}_1$ such that $A_{kj}=A_{k,(j+1)\langle N_1\rangle}=A_{k,(j+2)\langle N_1\rangle}=\frac{\alpha}{3}$. We denote such $j$ by $j=m_k$. The equations are then given by
\begin{equation}\label{eq:KuramotoLeadersFollowers}
    \begin{aligned}
    \dot \th_j&=\sum_{l=1}^{N_1}(A_1)_{jl}\sin(\th_l-\th_j), \quad j \in \mathcal{N}_1=\{1,\cdots,N_1\},\\
    \dot \th_k&=\frac{\alpha_k}{n_k}\sum_{l \in \cE(k)}\sin(\th_l-\th_k), \quad k \in \mathcal{N}_2=\{N_1+1,\cdots,N\},
\end{aligned}
\end{equation}
where $A_1$ is a circulant matrix satisfying the cone condition, and $\cE(k):=\{m_k,\cdots,(m_k+n_k-1)\,\langle N_1\rangle \}\subset \mathcal{N}_1$ denotes the $n_k$ leaders to which oscillator $k$ is coupled.
Note that we impose no sign condition on $\alpha_k$, so that even though we use the term ``follower'', if $\alpha_k<0$, agent $k$ is in reality repelled from its ``leaders''.

Supposing further that the leaders are in the basin of attraction of the rotational twisted state with rotation speed $\omega$, then we know from Theorem \ref{t:Kurrotpre} that each leader tends to the following state:
\begin{align}
    \th_j(t)&=\omega t+\frac{2\pi j}{N_1}+\th, \quad j \in \{1,\cdots,N_1\},\label{kurmilllead1}\\ 
    \omega&=\sum_{l=1}^{N-1}a_l\sin(\phi_l)\label{kurmilllead2}
\end{align}
for some fixed phase shift $\th \in [0,2\pi)$ common to all agents.

The following theorem provides the phase transition in terms of the coupling strength $\alpha_k$, the number of followed leaders $n_k$ and the rotation speed $\omega$.

\begin{Theorem}
    Consider the solution to the leader-follower system \eqref{eq:KuramotoLeadersFollowers}, with $N_1\geq 5$ and $n_k<N_1$ for all $k\in \mathcal{N}_2=\{N_1+1,\cdots,N\}$. Let $A_1$ be a circulant matrix generated by a vector $a\in\R^{N_1}$ satisfying the cone condition of Def. \ref{def:cone}, which guarantees the existence of the rotational twisted state \eqref{kurmilllead1} for the agents in $\{1,\cdots,N_1\}$ at angular velocity $\omega:=\sum_{l=1}^{N_1-1}a_l\sin(\phi_l)$.
    Then, the followers $\mathcal{N}_2$ can join the rotational twisted state if and only if for all $k\in\mathcal{N}_2$,
    \begin{align*}
        |\alpha_k|\geq \alpha^*_k:=\frac{|\omega| n_k\sin(\frac{\pi}{N_1})}{\sin\left(\frac{n_k\pi}{N_1}\right)}.
    \end{align*}
    In that case, the rotational twisted state is given by 
    \begin{equation}\label{eq:KurLeadFollEquil}
    \begin{aligned}
    \th_j(t)&=\omega t+\frac{2\pi j}{N_1}+\th, \quad j \in \{1,\cdots,N_1\},\\ 
    \th_k(t)& = \omega t + \theta_k^* +\theta,  \quad k \in \{N_1+1,\cdots,N\},
    \end{aligned}
    \end{equation}
in which for each $k\in\mathcal{N}_2$, the phase shift $\theta_k^*$ can take one of two values: 
\begin{align}
\theta_k^*=(\theta_k^*)_+:=&\frac{2\pi}{N_1}(i_k + \frac{n_k-1}{2})- \arcsin(\mathrm{sgn}(\omega)\frac{\alpha_k^*}{\alpha_k})\label{eq:thetak+}\\
\text{or} \quad \theta_k^*=(\theta_k^*)_-:=&\frac{2\pi}{N_1}(i_k + \frac{n_k-1}{2})- \pi+\arcsin(\mathrm{sgn}(\omega)\frac{\alpha_k^*}{\alpha_k}).\label{eq:thetak-}
\end{align}
Moreover, the rotational twisted state satisfying for all $k\in\{N_1+1,\cdots,N\}$
\begin{equation*}
\theta_k^* = 
    \begin{cases}
    (\theta_k^*)_+ \text{ if } \mathrm{sgn}(\alpha_k)=\mathrm{sgn}(\omega)\\
    (\theta_k^*)_- \text{ if } \mathrm{sgn}(\alpha_k)=-\mathrm{sgn}(\omega)
    \end{cases}
\end{equation*}
is locally stable.
\end{Theorem}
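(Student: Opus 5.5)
Since $A_2=A_4=0$, the leader subsystem is autonomous. By Theorem~\ref{t:Kurrotpre} the leaders sit on (or converge to) the rotational twisted state \eqref{kurmilllead1}. The followers therefore decouple from one another. Each follower $k$ obeys a scalar non-autonomous equation driven by the known leader phases $\th_l(t)=\omega t+\phi_l+\th$ (with $\phi_l=2\pi l/N_1$).

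\textbf{Step 1: reduce each follower to an Adler equation.} I pass to the rotating frame $\psi_k:=\th_k-\omega t-\th$. Then
\[
\dot\psi_k=-\omega+\frac{\alpha_k}{n_k}\sum_{l\in\cE(k)}\sin(\phi_l-\psi_k).
\]
Writing $l=m_k+p$ for $p=0,\dots,n_k-1$, I sum the geometric series of the complex exponentials $e^{i\phi_l}$. Its modulus is $\sin(n_k\pi/N_1)/\sin(\pi/N_1)$ and its argument is $\frac{2\pi}{N_1}(m_k+\frac{n_k-1}{2})$, which is the center of the arc of leaders. The hypothesis $n_k<N_1$ guarantees that this modulus is nonzero. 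Denoting the center by $\Psi_k$, the sum collapses to
\[
\dot\psi_k=-\omega+\frac{\alpha_k}{n_k}\frac{\sin(n_k\pi/N_1)}{\sin(\pi/N_1)}\sin(\Psi_k-\psi_k)=-\omega+|\omega|\frac{\alpha_k}{\alpha_k^*}\sin(\Psi_k-\psi_k).
\]
This is an Adler equation with constant coefficients.

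\textbf{Step 2: equilibria and the threshold.} A frequency-locked state, i.e.\ follower $k$ joining the rotation, is exactly an equilibrium $\psi_k=\theta_k^*$. It requires
\[
\sin(\Psi_k-\theta_k^*)=\mathrm{sgn}(\omega)\,\frac{\alpha_k^*}{\alpha_k},
\]
which is solvable if and only if $|\alpha_k|\ge\alpha_k^*$. The two solutions are $\Psi_k-\theta_k^*=\arcsin(\cdot)$ and $\Psi_k-\theta_k^*=\pi-\arcsin(\cdot)$, which give exactly \eqref{eq:thetak+} and \eqref{eq:thetak-} (with $i_k=m_k$). For the converse direction, when $|\alpha_k|<\alpha_k^*$ the right-hand side never vanishes. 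Hence $\psi_k$ drifts monotonically and follower $k$ cannot lock: its mean frequency differs from $\omega$.

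\textbf{Step 3: stability.} The full Jacobian at \eqref{eq:KurLeadFollEquil}, taken in the rotating frame, is block lower triangular. The leader block is the Kuramoto Jacobian from Section~\ref{sec:KurStability}; the cone condition makes it stable up to the neutral rotation mode. The follower block is diagonal, because followers do not interact, with entries
\[
-\frac{\alpha_k}{\alpha_k^*}|\omega|\cos(\Psi_k-\theta_k^*).
\]
For the $+$ branch $\cos>0$, and for the $-$ branch $\cos<0$. So the entry is negative exactly when $\alpha_k$ has the sign prescribed in the statement. That is $\mathrm{sgn}(\alpha_k)=\mathrm{sgn}(\omega)$ for $(\theta_k^*)_+$, where $\arcsin$ takes the argument $\alpha^*_k/|\alpha_k|$ consistently; the other sign case is symmetric. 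Since the spectrum of a block triangular matrix is the union of the spectra of its diagonal blocks, local stability follows. The neutral mode corresponds to the global shift $\th$, which moves leaders and followers together.

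\textbf{Main obstacle.} The computations are routine. The delicate points are two: tracking signs in the $\arcsin$ case analysis of Step 3 when $\omega$ or $\alpha_k$ is negative, and handling the degenerate equality case $|\alpha_k|=\alpha_k^*$. In that case the two equilibria merge (a saddle-node) and stability is only neutral. I would state that locking still exists there, so the ``if and only if'' holds, while stability is claimed only for the strict inequality.
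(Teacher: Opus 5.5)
Your route is the paper's: the locking condition comes from the finite sine-sum formula, and stability from a block lower-triangular Jacobian whose follower block is diagonal. Your monotone-drift argument for $|\alpha_k|<\alpha_k^*$, and your observation that the equality case is only neutral, are small improvements over the paper, which glosses over both. The gap is the last step of Step 3. Your own diagonal entry at $(\theta_k^*)_+$ is
\[
-|\omega|\frac{\alpha_k}{\alpha_k^*}\cos\bigl(\arcsin s_k\bigr)=-|\omega|\frac{\alpha_k}{\alpha_k^*}\sqrt{1-s_k^2},\qquad s_k:=\mathrm{sgn}(\omega)\frac{\alpha_k^*}{\alpha_k}.
\]
Its sign is $-\mathrm{sgn}(\alpha_k)$, independently of the sign of $\omega$. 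At $(\theta_k^*)_-$ the entry is $+|\omega|\frac{\alpha_k}{\alpha_k^*}\sqrt{1-s_k^2}$. Hence, for $|\alpha_k|>\alpha_k^*$, $(\theta_k^*)_+$ is stable iff $\alpha_k>0$, and $(\theta_k^*)_-$ is stable iff $\alpha_k<0$. This agrees with the selection rule in the statement only when $\omega>0$. For $\omega<0$ the stated rule picks the repelling equilibrium in both subcases.

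Concretely, take $n_k=1$, $\omega<0$ and $\alpha_k>|\omega|=\alpha_k^*$. The statement prescribes $(\theta_k^*)_-$, where $\cos(\Psi_k-\theta_k^*)=-\sqrt{1-s_k^2}<0$. The entry there is $\alpha_k\sqrt{1-s_k^2}>0$, so this is the unstable equilibrium of the Adler equation.

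Your sentence ``$\arcsin$ takes the argument $\alpha_k^*/|\alpha_k|$ consistently; the other sign case is symmetric'' is where this slips through. A consistent $\arcsin$ argument only fixes the sign of the cosine. The sign of the entry is then decided by the prefactor $\alpha_k$, and the $\omega<0$ cases are not mirror images of the $\omega>0$ ones under the rule as stated. The paper's proof makes the same error: it derives the same eigenvalue $-\frac{\alpha_k}{n_k}\frac{\sin(n_k\pi/N_1)}{\sin(\pi/N_1)}\cos(\Psi_k-\theta_k^*)$ and then misreads its sign table. So the stated stability claim holds only for $\omega>0$.

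What your computation actually proves is local stability of the state with $\theta_k^*=(\theta_k^*)_+$ when $\alpha_k>0$ and $\theta_k^*=(\theta_k^*)_-$ when $\alpha_k<0$, under strict inequality $|\alpha_k|>\alpha_k^*$. You should conclude with that corrected rule rather than assert agreement with the statement. A minor point: writing the coupling as $|\omega|\alpha_k/\alpha_k^*$ is undefined when $\omega=0$. Keep it as $K_k=\frac{\alpha_k\sin(n_k\pi/N_1)}{n_k\sin(\pi/N_1)}$, which also makes the sign bookkeeping transparent.
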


\begin{proof}
    We seek the solution
    \begin{align}\label{kurmillunif}
        \dot \th_k=\frac{\alpha_k}{n_k}\sum_{l\in \cE(k)}\sin(\th_l-\th_k)=\omega.
    \end{align}
    If $\dot\th_k=\omega$, then integrating gives
    \begin{align}\label{th:foll}
        \th_k(t)=\omega t+\th_k^*+\th
    \end{align}
    where $\th_k^*$ is the phase offset that will be determined below.
Plugging in \eqref{th:foll} and \eqref{kurmilllead1} into \eqref{kurmillunif} yields the self-consistency equation
\[
    \frac{\alpha_k}{n_1}\sum_{l \in \cE(k)}\sin\left(\frac{2\pi l}{N_1}-\th_k^*\right)=\omega.
\]
Letting $\cE(k)=\{m_k,\cdots,(m_k+n_k-1)\,\langle N_1\rangle\}$, the sum is given by
\[
    \frac{\alpha_k}{n_1}\sum_{l=m_k}^{m_k+n_k-1}\sin\left(\frac{2\pi l}{N_1}-\th_k^*\right)=\omega.
\]
Using the finite sum formula for sines, we get
\begin{align}\label{thk*}
    \frac{\alpha_k\sin\left(\frac{n_1\pi}{N_1}\right)\sin\left(\frac{(n_k-1+2 m_k)\pi}{N_1}-\th_k^*\right)}{n_1\sin\left(\frac{\pi}{N_1}\right)}=\omega.
\end{align}
    Since $|\sin\left(\frac{(n_1+1)\pi}{N_1}-\th_k^*\right)|\leq1$, solving for $\alpha_k$ gives the exact phase transition associated with agent $k$. Finally, we can solve for $\th_k^*$ by taking the inverse sine, and consequently obtain two possible values \eqref{eq:thetak+} and \eqref{eq:thetak-}.

We can further prove the stability of such a state.
    We study the Jacobian of the fixed point map. However, we need new difference variables to discuss the phase behavior of the followers in relation to the leaders. Since each follower $k \in \{N_1+1,\cdots,N\}$ follows $n_k$ adjacent leaders $\cE(k)=\{m_k,\cdots,(m_k+n_k-1)\langle N_1\rangle\}$, we need only observe the difference between the follower and one of the leaders, for instance the first one. We define
    \begin{align*}
        \Th_{m_kk}:=\th_{m_k}-\th_k \ \mod 2\pi
    \end{align*}
    where $m_k$ denotes the first leader that oscillator $k$ is coupled to. In this sense, we can write the remainder of the leaders to which oscillator $k$ is coupled via adding and subtracting the oscillator $\th_{m_k}$
    \begin{align*}
        \forall j\in \cE(k), \quad  \th_{j}-\th_k=\th_j-\th_{m_k}+\Th_{m_k k}
    \end{align*}
    where we recall that for the leaders, $\Th_{l}=(\th_{l+1}-\th_l) \ \mod 2\pi$.

    Therefore, appending the followers to the circulant leader group incorporates exactly $N_2$ more variables to the original Kuramoto circulant system: 
    \begin{align*}
\ddt \Th_{m_k k} =&
\sum_{j=1}^N (A_1)_{m_k j} \sin (\theta_j-\theta_{m_k}) - \frac{\alpha_k}{n_k} \sum_{j\in \cE(k)}\sin(\theta_{j}-\theta_k), \qquad k\in\{N_1+1,\cdots,N\}
    \end{align*}
    for the last $N_2$.
The Jacobian then takes the form
\[
J=\left(\begin{array}{c|c}
       J_1  & J_2  \\
         \hline
       J_3  & J_4
    \end{array}\right)
\]
where $J_1$ is the original Jacobian of the circulant structure given by $A_1$. The submatrix $J_2$ is computed via the partial derivatives of $\Theta_{m_k k}$ acting on the fixed-point maps of the leader agents. As none of the followers influence the leaders, $J_2=0$. Hence the eigenvalues of the full matrix $J$ are given by the union of the eigenvalues of $J_1$ and $J_4$. As seen in Section \ref{sec:KurStability}, $\l_1(J_1)=0$ and every other eigenvalue of $J_1$ has negative real part. Hence, we need only compute the eigenvalues of $J_4$.

Further, since there are no interfollower interactions, $J_4$ is a diagonal matrix given by
\begin{align*}
    J_4=\mathrm{diag}\left\{-\frac{\alpha_k}{n_k}\sum_{l \in \cE(k)}\cos\left(\frac{2\pi l}{N_1}-\th_k^*\right)\right\}_{k\in \{1,\cdots,N_2\}}.
\end{align*}
Using the finite sum formula for cosines, we get
\begin{align*}
    \lambda_k(J_4)=-\frac{\alpha_k}{n_k}\sum_{l \in \cE(k)}\cos\left(\frac{2\pi l}{N_1}-\th_k^*\right)=-\frac{\alpha_k}{n_k}\frac{\sin\left(\frac{n_k\pi}{N_1}\right)}{\sin\left(\frac{\pi}{N_1}\right)}\cos\left(\frac{(n_k-1+2 m_k)\pi}{N_1}-\th_k^*\right).
\end{align*}
We now study the sign of the eigenvalues associated with the two possible values of $\theta_k^*$ given in \eqref{eq:thetak+}-\eqref{eq:thetak-}.
It holds 
\[
\lambda_k(J_4) = 
\begin{cases}
-\frac{\alpha_k}{n_k} \frac{\sin\left(\frac{n_k\pi}{N_1}\right)}{\sin\left(\frac{\pi}{N_1}\right)}
\cos(\arcsin(\mathrm{sgn}(\omega) \frac{\alpha_k^*}{\alpha_k}))\; \text{ if } \theta_k^*=(\theta_k^*)_+\\
-\frac{\alpha_k}{n_k} \frac{\sin\left(\frac{n_k\pi}{N_1}\right)}{\sin\left(\frac{\pi}{N_1}\right)}
\cos(\pi-\arcsin(\mathrm{sgn}(\omega) \frac{\alpha_k^*}{\alpha_k})) \; \text{ if } \theta_k^*=(\theta_k^*)_-.
\end{cases}
\]
Since $\arcsin:[-1,1]\to [-\frac{\pi}{2}, \frac{\pi}{2}]$, this implies: 
\begin{align*}
\lambda_k(J_4)<0 & \quad \text{ if } \theta_k^*=(\theta_k^*)_+ \text{ and } \mathrm{sgn}(\alpha_k)=\mathrm{sgn}(\omega), \quad \text{ or if } \theta_k^*=(\theta_k^*)_- \text{ and } \mathrm{sgn}(\alpha_k)=-\mathrm{sgn}(\omega)\\
\lambda_k(J_4)>0 & \quad \text{ if } \theta_k^*=(\theta_k^*)_+ \text{ and } \mathrm{sgn}(\alpha_k)=-\mathrm{sgn}(\omega)
\quad \text{ or if } \theta_k^*=(\theta_k^*)_- \text{ and } \mathrm{sgn}(\alpha_k)=\mathrm{sgn}(\omega).
\end{align*}

\end{proof}

\begin{Rem}
Note that if $n_k=1$, i.e. agent $k$ follows only agent $m_k\in\{N_1\}$, it holds $\theta_k^*=\frac{2\pi m_k}{N_1} - \arcsin(\mathrm{sgn}(\omega)\frac{\alpha_k^*}{\alpha_k})$, which implies that the necessary coupling strength in order for agents $k$ and $m_k$ to be exactly superimposed (i.e. $\theta_k^*=\frac{2\pi i_k}{N_1}$) is infinitly large.\\
Further, note that $n_k<N_1$ guarantees that $\sin\left(\frac{n_1\pi}{N_1}\right)>0$. This further implies that if $n_k=N_1$, so that the follower $k$ follows every leader equally, then the followers cannot join the rotational state for any finite coupling strength $\alpha_k$.\\
Lastly, note that in the phase shifts $(\theta_k^*)_+$ and $(\theta_k^*)_-$, the angle $\frac{2\pi}{N_1} (m_k+\frac{n_k-1}{2})$ represents the phase shift of the average phase of the leaders followed by $k$.
\end{Rem}

Briefly, we note the special case where $\omega=0$. This occurs if the circulant adjacency matrix is also symmetric. In this case, the twisted state is a fixed point of the system. Thus the equation for a follower must also relax to a fixed point,
\begin{align*}
    \dot \th_k=\frac{\alpha_k}{n_k}\sum_{l \in \cE(k)}\sin(\th_l-\th_k)=0,
\end{align*}
for any $\alpha_k>0$. By symmetry we see that $$\th_k^{\infty}=\frac{1}{n_k}\sum_{l\in \cE(k)}\th_l^{\infty}$$
where the $\th_l^{\infty}$ have settled to some rotation of the roots of unity. This provides a substantial difference between the follower dynamics of a twisted state with graph-induced rotations and those of a stationary twisted state.

\subsection{Stuart-Landau: Unique leader-follower dynamics}
In the Stuart-Landau case, the amplitude dependence adds further complications. Therefore we restrict ourselves to the case of each follower following a unique leader, and to the case of nonnegative connections, $A_{jk}\geq 0$. Again we assume $A_4=0$, and now for each $k \in \mathcal{N}_2$, there is exactly one $i_k \in \mathcal{N}_1$ for which the agent follows with connection strength $a_{m_k k}$. Let us write the equations here:
\begin{align*}
    \dot z_j&=\sum_{l=1}^{N_1}(A_1)_{jl}(z_l-z_j)+\frac{1}{\tau}\left(1-\frac{|z_j|^2}{\beta^2}\right)z_j, \quad j \in \mathcal{N}_1,\\
    \dot z_k&=a_{m_k k}(z_{m_k}-z_k)+\frac{1}{\tau}\left(1-\frac{|z_k|^2}{\beta^2}\right)z_k, \quad k \in \mathcal{N}_2,
\end{align*}
where $A_1$ is a nonnegative ciruclant matrix satisfying the cone condition.

As agent $k$ follows a unique agent $j=m_k\in\mathcal{N}_1$, we seek the solution where agent $k$ adopts the rotational velocity, $\omega$, however, the limiting amplitude $r_k$ and the corresponding phase difference $\th_j-\th_k=\Th_{jk}$ may be different than that of the leaders. 

We seek to find the onset of synchronization. Consider the fixed-point map of the phase difference $\Th_{jk}$ and the amplitude $r_k$:
\begin{align}
    F(\Th_{jk})&=a_{jk}\frac{R}{r_k}\sin(\Th_{jk})-\omega=0\label{fpm:lf1},\\
    F(r_k)&=\frac{1}{\t}\left(1-\frac{r_k^2}{\beta^2}\right)r_k+a_{jk}(\cos(\Th_{jk})R-r_k)=0.\label{fpm:lf2}
\end{align}
The onset occurs when \eqref{fpm:lf1} is satisfied with $\sin(\Th_{jk})=1$, and hence $\cos(\Th_{jk})=0$. Plugging this ansatz into \eqref{fpm:lf2} yields
\begin{align*}
    \frac{1}{\t}\left(1-\frac{r_k^2}{\beta^2}\right)r_k-a_{jk}r_k=0,
\end{align*}
solving for $a_{jk}$ yields
\begin{align*}
    a_{jk}=\frac{1}{\t}\left(1-\frac{r_k^2}{\beta^2}\right).
\end{align*}
Then utilizing $\sin(\Th_{jk})=0$ we have $a_{jk}=\frac{r_k}{R}\omega$. Equating these two gives the following quadratic equation in $r_k$
\begin{align*}
    \frac{R}{\t\o\b^2}r_k^2+r_k-\frac{R}{\o\t}=0,
\end{align*}
which provides
\begin{align*}
    r_k=\frac{-\b^2\pm \b\sqrt{\b^2+\frac{4R^2}{\o^2\t^2}}}{\frac{2R}{\t\o}},
\end{align*}
and subsequently the critical coupling
\begin{align}\label{critSL}
    a^*=\frac{\t\o^2\b^2}{2R^2}\left(\sqrt{1+\frac{4R^2}{\o^2\t^2\b^2}}-1\right).
\end{align}
Note that $\lim_{\tau\to 0}a^*(\t)=|\omega|$, which is exactly the condition provided for $n_1=1$ in the previous Kuramoto leader-follower coupling. Further note that $\tau,\o,\b,R$ are all known. We can now prove stability of the leader-follower twisted rotational state for $a_{jk}>a^*$. First, let us prove the following lemma.
\begin{Lemma}
    Let $k\in\mathcal{N}_2$, and suppose $a_{m_k k}>a^*$. Then there exists a rotational solution to \eqref{fpm:lf1}-\eqref{fpm:lf2} such that $\cos(\Th_{i_k k}),\sin(\Th_{i_k k})>0$.
\end{Lemma}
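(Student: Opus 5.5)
Write $a:=a_{m_k k}$ and $j:=m_k$. The plan is to eliminate $\Th_{jk}$ from \eqref{fpm:lf1}--\eqref{fpm:lf2} and reduce the problem to a scalar equation in $r_k$, then apply the intermediate value theorem on a suitable interval. I will assume $\omega>0$ throughout. If $\omega<0$, complex conjugation of the whole system reverses the orientation and the sign of $\omega$, and I would argue symmetrically. Note that the requirement $\sin(\Th_{jk})>0$ is only compatible with \eqref{fpm:lf1} when $\omega$ has the sign of $a$, since $a\geq 0$.

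First I solve for the trigonometric quantities. From \eqref{fpm:lf1}, $\sin(\Th_{jk})=\frac{\omega r_k}{aR}$. Dividing \eqref{fpm:lf2} by $r_k$ and rearranging gives
\[
\cos(\Th_{jk})=\frac{r_k}{aR}\,h(r_k),\qquad h(r):=a-\frac{1}{\tau}\Big(1-\frac{r^2}{\beta^2}\Big).
\]
A pair $(r_k,\Th_{jk})$ solving the system therefore exists, with both $\sin(\Th_{jk})>0$ and $\cos(\Th_{jk})>0$, if and only if there is some $r>0$ with $h(r)>0$ and
\[
g(r):=\frac{r^2}{a^2R^2}\big(\omega^2+h(r)^2\big)-1=0 .
\]
Given such an $r$, I recover $\Th_{jk}\in(0,\frac{\pi}{2})$ from the pair $(\cos,\sin)$.

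Next I set up the intermediate value argument. Let $r_0:=\beta\sqrt{(1-\tau a)_+}$, so that $h(r_0)\ge 0$ and $h>0$ on $(r_0,\infty)$. On $[r_0,\infty)$ both $r^2$ and $h(r)^2$ are nonnegative and increasing, so $g$ is continuous and strictly increasing there. Moreover $g(r)\to\infty$ as $r\to\infty$. It therefore suffices to show $g(r_0)<0$.
\begin{itemize}
\item If $\tau a\ge 1$, then $r_0=0$ and $g(0)=-1<0$.
\item If $\tau a<1$, then $h(r_0)=0$, so $g(r_0)<0$ is equivalent to $\omega\beta\sqrt{1-\tau a}<aR$.
\end{itemize}

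The main step is to show that this last inequality is exactly the hypothesis $a>a^*$. Consider $\psi(a):=aR-\omega\beta\sqrt{1-\tau a}$ on $[0,1/\tau)$. It is strictly increasing, with $\psi(0)<0$. Its zero is obtained by setting $r=\beta\sqrt{1-\tau a}$ and $a=\frac{r\omega}{R}$, which is precisely the onset system solved above. That system gives the quadratic in $r_k$ and hence the value \eqref{critSL}. So the unique zero of $\psi$ is $a^*$, and $a>a^*$ implies $\psi(a)>0$, i.e. $g(r_0)<0$.

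In this step I would verify carefully that the positive root of the quadratic corresponds to $a^*$ lying in $(0,1/\tau)$. This algebraic identification of the onset root with the zero of $\psi$ is the only delicate point. The IVT then gives a unique $r_k>r_0$ with $g(r_k)=0$, at which $\cos(\Th_{jk})>0$ and $\sin(\Th_{jk})>0$, as claimed.
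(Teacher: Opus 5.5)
Your argument is correct. It follows the same overall strategy as the paper: eliminate $\Th_{jk}$, apply the intermediate value theorem to a scalar equation in $r_k$, and match the endpoint sign condition with $a>a^*$. The difference is that the roles of the two constraints are exchanged.

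The paper substitutes $\cos(\Th_{jk})=+\sqrt{1-\omega^2r_k^2/(a^2R^2)}$ into \eqref{fpm:lf2} and works on $[0,aR/\omega]$, where the sine dictated by \eqref{fpm:lf1} is at most $1$. Positivity of the cosine comes from the branch choice. The left endpoint gives the value $aR>0$. At the right endpoint, where $\sin(\Th_{jk})=1$, the sign is that of $H(a)=\frac{1}{\tau}\bigl(1-\frac{a^2R^2}{\omega^2\beta^2}\bigr)-a$, which is strictly decreasing with $H(a^*)=0$.

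You work instead on $[r_0,\infty)$, where the cosine solved from \eqref{fpm:lf2} is nonnegative, and impose $\sin(\Th_{jk})\le 1$ through the Pythagorean identity. Your endpoint $r_0$ is where that cosine vanishes. The two endpoint tests are in fact the same inequality: for $\tau a<1$, $\psi(a)>0$ is equivalent to $a^2R^2>\omega^2\beta^2(1-\tau a)$, i.e.\ to $H(a)<0$. So the step you flag as delicate is immediate. Also, $a^*<1/\tau$ follows from $(a^*)^2R^2=\omega^2\beta^2(1-\tau a^*)$ together with $a^*>0$.

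Each version has an advantage. Yours additionally gives uniqueness of the solution on the positive-cosine branch, by monotonicity of $g$, which the paper does not state. The paper's avoids your case distinction on the sign of $1-\tau a$, since $H$ is monotone on all of $(0,\infty)$.
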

\begin{proof}
    Let $k\in\mathcal{N}_2$.
    For conciseness of notation, we rename $m_k=j$.
    Without loss of generality suppose $\omega>0$. Let $a_{jk}>a^*$, the first equation yields the bounds for $r_k$:
    \begin{align*}
        0\leq r_k\leq \frac{a_{jk}R}{\omega}.
    \end{align*}
    Now using the first equation to substitute in for cosine in the second equation we define
    \begin{align*}
        G(r_k,a_{jk}):=\frac{1}{\t}\left(1-\frac{r_k^2}{\beta^2}\right)r_k+a_{jk}\left(R\sqrt{1-\frac{\omega^2r_{k}^2}{a_{jk}^2R^2}}-r_k\right)=0,
    \end{align*}
    where we chose the positive branch of cosine, and finding a solution to $G(r_k,a_{jk})=0$ for each $a_{jk}>a^*$ will yield the result.
    
    Now $G(0,a_{jk})=a_{jk}R>0$, and we compute at the upper endpoint:
    \begin{align}
    G\left(\frac{a_{jk}R}{\omega},a_{jk}\right)=\frac{a_{jk}R}{\omega}\left(\frac{1}{\tau}\left(1-\frac{a_{jk}^2R^2}{\omega^2\beta^2}\right)-a_{jk}\right)
    \end{align}
    
    Now, letting $$H(a_{jk}):=\left(\frac{1}{\tau}\left(1-\frac{a_{jk}^2R^2}{\omega^2\beta^2}\right)-a_{jk}\right),$$
    we have $\mathrm{sgn}\left(H(a_{jk})\right)=\mathrm{sgn}\left(G\left(\frac{a_{jk}R}{\omega},a_{jk}\right)\right)$ and $H(a^*)=0$. Differentiating yields
    \begin{align*}
        H'(a_{jk})=-\frac{2a_{ajk}R^2}{\tau\omega^2\beta^2}-1<0.
    \end{align*}
    Hence $H$ is strictly decreasing in $a_{jk}$, so $H(a_{jk})<0$ for all $a_{jk}>a^*$. Thus $G\left(\frac{a_{jk}R}{\omega},a_{jk}\right)<0$ and by the Intermediate Value Theorem there exists $r_k(a_{jk}) \in \left(0,\frac{a_{jk}R}{\omega}\right)$ and $\Th_{jk}(a_{jk})=\arcsin\left(\frac{\omega r_k(a_{jk})}{a_{jk}R}\right)$.
\end{proof}
The above shows that the threshold $a^*$ marks a saddle-node bifurcation, where doing similar arguments for the negative branch of cosine would yield a solution with $\cos(\Th_{i_k k})<0$. However this branch will produce an unstable equilibrium so we focus on the positive branch below.
\begin{Theorem}
    Let $A$ be a nonnegative adjacency matrix such that $A_1$ is circulant, satisfying the cone condition of Def. \ref{def:cone}, $A_2=0=A_4$, and for each $k \in \mathcal{N}_2$, there exists a unique $i_k \in \mathcal{N}_1$ such that $(A_3)_{i_k k}=a_{i_k k}>0$, and every other element of $A_3$ is zero.
    Suppose further that $\tau<\tau^{**}$ and  $a_{i_k k}>a^*$ for all $k\in\indices$. Then there exists a linearly stable rotational state such that for $z_j=r_je^{i\th_j}$
    \begin{align*}
        z_j&=Re^{i(\omega t+\frac{2\pi j}{N_1}+\th)}, \quad j \in \mathcal{N}_1,\\
        z_k&=r_ke^{i(\omega t+\th_k^*)}, \quad k \in \mathcal{N}_2.
    \end{align*}
\end{Theorem}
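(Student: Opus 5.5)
Existence is assembled from results already in hand. For the leaders, Proposition \ref{Prop:ExistenceMillingSL} gives the rotational twisted state with radius $R$ and speed $\omega$; this needs $\tau<\tau^{**}\leq\tau^*$. For each follower $k$, with $j=i_k$, the preceding Lemma applies because $a_{jk}>a^*$. It gives a solution $(r_k,\Th_{jk})$ of \eqref{fpm:lf1}--\eqref{fpm:lf2} with $r_k\in(0,a_{jk}R/|\omega|)$ and $\cos\Th_{jk}>0$. Setting $\th_k^*=\frac{2\pi j}{N_1}-\Th_{jk}$, the pair $z_k=r_ke^{i(\omega t+\th_k^*+\th)}$ solves the follower equation, since \eqref{fpm:lf1}--\eqref{fpm:lf2} are exactly the real and imaginary parts of that equation divided by $z_k$. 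The case $\omega<0$ is symmetric (take $\Th_{jk}\mapsto-\Th_{jk}$), and for $\omega=0$ one can take $\Th_{jk}=0$.

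For linear stability I will work in the rotating frame, as in the proof of Theorem \ref{t:SLstab}. The variables are the leaders' $(r_j,\th_j)_{j\in\mathcal N_1}$ and, for each follower, $(r_k,\Th_{i_kk})$ with $\Th_{i_kk}=\th_{i_k}-\th_k$. As in the Kuramoto leader--follower proof, the followers do not influence the leaders ($A_2=0$). The Jacobian is therefore block lower triangular, with upper-left block $J_1$ equal to the $2N_1\times 2N_1$ Jacobian of Theorem \ref{t:SLstab}. Its spectrum is $\{0\}$ (the rotation mode) together with eigenvalues of negative real part for $\tau<\tau^{**}$. The lower-right block $J_4$ is block diagonal, since followers do not interact ($A_4=0$), with one $2\times 2$ block per follower. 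It thus remains to show that each $2\times2$ block is Hurwitz.

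For follower $k$ (write $a=a_{i_kk}$, $\Th=\Th_{i_kk}$), I will differentiate the reduced system
\[
\dot r_k=\tfrac1\tau\bigl(1-\tfrac{r_k^2}{\beta^2}\bigr)r_k+a(R\cos\Th-r_k),\qquad \dot\Th=\omega-a\tfrac{R}{r_k}\sin\Th,
\]
where the leader phase velocity equals $\omega$ at equilibrium and leader perturbations enter only through the off-diagonal block. Using the equilibrium relations $aR\cos\Th=ar_k-\frac1\tau(1-r_k^2/\beta^2)r_k$ and $aR\sin\Th=\omega r_k$, the block becomes
\[
M_k=\begin{pmatrix} -\tfrac{2r_k^2}{\tau\beta^2}-\tfrac{aR\cos\Th}{r_k} & -aR\sin\Th\\[2pt] \tfrac{\omega}{r_k} & -\tfrac{aR\cos\Th}{r_k}\end{pmatrix}.
\]
The (1,1) entry uses $\frac1\tau(1-3r_k^2/\beta^2)-a=-\frac{2r_k^2}{\tau\beta^2}-\frac{aR\cos\Th}{r_k}$. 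The trace is negative because $\cos\Th>0$. The determinant is $\bigl(\frac{2r_k^2}{\tau\beta^2}+\frac{aR\cos\Th}{r_k}\bigr)\frac{aR\cos\Th}{r_k}+\frac{\omega aR\sin\Th}{r_k}$. The last term equals $\omega^2>0$ by \eqref{fpm:lf1}, so the determinant is positive. Negative trace and positive determinant make $M_k$ Hurwitz.

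The main obstacle is bookkeeping rather than analysis. First, one must check that the difference variable $\Th_{i_kk}$ decouples correctly: its equation involves $\dot\th_{i_k}$, which depends only on leader variables and so lands in the off-diagonal block. Second, the signs of the entries must be right; if $\cos\Th>0$ is used correctly, the positivity of $\det M_k$ is robust. I will also remark that the negative cosine branch makes the determinant condition fail, consistent with the saddle-node interpretation given before the theorem.
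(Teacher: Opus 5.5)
Your proof is correct and follows essentially the same route as the paper. In both, $J_2=0$ reduces the spectrum to that of $J_1$ (handled by Theorem~\ref{t:SLstab}) together with the $2\times 2$ follower blocks, and your trace/determinant check on $M_k$ after substituting the equilibrium relations is exactly the paper's Routh--Hurwitz argument on $\lambda^2+\lambda\bigl(\frac{2r_k^2}{\tau\beta^2}+\frac{2aR\cos\Theta}{r_k}\bigr)+\frac{a^2R^2}{r_k^2}+\frac{2r_kR}{\tau\beta^2}a\cos\Theta=0$, using $\cos\Theta>0$ from the Lemma.

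You should drop your closing remark about the negative-cosine branch. We have $\det M_k=\frac{2r_kaR\cos\Theta}{\tau\beta^2}+\frac{a^2R^2}{r_k^2}$, which equals $\omega^2>0$ at $\cos\Theta=0$ and stays positive, with negative trace, for slightly negative $\cos\Theta$. So that branch is not automatically unstable, and the equilibrium at $a=a^*$ is hyperbolic rather than a fold point; the theorem does not depend on this, since it only needs $\cos\Theta>0$.
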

\begin{proof}
    The existence and stability of the rotational state for the first $N_1$ agents has already been shown. As previously, for conciseness of notation, we denote $m_k=j$. Now as $a_{jk}>a^*$, there exists $r_k$ and $\Th_{jk}$ such that $\cos(\Th_{jk})>0$, $\sin(\Th_{jk})>0$ satisfying \eqref{fpm:lf1}-\eqref{fpm:lf2}.

To see the stability, we must return to the linearization techniques we used previously. However, the explicit values of $\Th_{jk}$ and $r_k$ will not be necessary to conclude on the  stability.

Once more we compute the Jacobian in the variables $(r_j,\th_j)$ for $j \in \mathcal{N}_1$ and for the remaining $N_2$ agents in the variables $(r_k, \Th_{jk})$, for each $k \in \mathcal{N}_2$ where $\Th_{jk}=\th_j-\th_k=\theta_{m_k}-\theta_k$ is the phase difference between the follower and the unique leader.

The Jacobian then takes the form
\[
J=\left(\begin{array}{c|c}
       J_1  & J_2  \\
         \hline
       J_3  & J_4
    \end{array}\right)
\]
where $J_1$ is the original Jacobian of the circulant structure. The submatrix $J_2$ is computed via the partial derivatives of $r_k$ and $\Theta_{jk}$ acting on the fixed-point maps of the leader agents. As none of the followers influence the leaders, $J_2=0$. Again, we need only compute the eigenvalues of $J_4$. To start, we compute the elements of $J_4$. They are given by taking the partial derivatives of the following fixed-point maps
\begin{align*}
    F(r_k)&=\frac{1}{\tau}(1-\frac{r_k^2}{\beta^2})r_k+a_{jk}(\cos(\Th_{jk})r_j-r_k),\\
    F(\Th_{jk})&=\sum_{l=1}^{N_1}A_{jl}\frac{r_l}{r_j}\sin(\th_l-\th_j)-a_{jk}\frac{r_j}{r_k}\sin(\Th_{jk}).
\end{align*}
Their partial derivatives for each $k,k' \in \mathcal{N}_2$ with $k\neq k'$ and $j=m_k\in\mathcal{N}_1$, $j'=m_{k'}\in\mathcal{N}_1$ the respective leaders of $k$ and $k'$, are given by
\begin{align*}
    \p_{r_k} F(r_k)&=\frac{1}{\tau}(1-\frac{3r_k^2}{\b^2})-a_{jk},\qquad
    \p_{r_{k'}} F(r_k)=0,\\
    \p_{\Th_{jk}}F(r_k)&=-a_{jk}\sin(\Th_{jk})r_j,\qquad
    \p_{\Th_{j'k'}}F(r_k)=0,\\
    \p_{r_k} F(\Th_{jk})&=a_{jk}\frac{r_j}{r_k^2}\sin(\Th_{jk}),\qquad
    \p_{r_{k'}}F(\Th_{jk})=0,\\
    \p_{\Th_{jk}}F(\Th_{jk})&=-a_{jk}\frac{r_j}{r_k}\cos(\Th_{jk}),\qquad
    \p_{\Th_{j'k'}}F(\Th_{jk})=0.
\end{align*}
Therefore the eigenvalues come from each of the $2\times 2$ submatrices:
\[
(J_4)_{ij}=\begin{pmatrix}
    \frac{1}{\tau}(1-\frac{3r_k^2}{\b^2})-a_{jk} & -a_{jk}\sin(\Th_{jk})r_j\\
    a_{jk}\frac{r_j}{r_k^2}\sin(\Th_{jk}) & -a_{jk}\frac{r_j}{r_k}\cos(\Th_{jk}).
\end{pmatrix}
\]
Computing the eigenvalues of $(J_4)_{ij}$ leads to solving
\begin{align*}
\l^2+\l\left(\frac{2r_k^2}{\t\beta^2}+2a_{jk}\cos(\Th_{jk})\frac{R}{r_k}\right)+a_{jk}^2\frac{R^2}{r_k^2}+\frac{2r_kR}{\tau\beta^2}a_{jk}\cos(\Th_{jk})=0
\end{align*}
As $\cos(\Th_{jk})>0$, the Routh-Hurwitz criterion guarantees that both of the eigenvalues have negative real part. Therefore the augmented milling state is linearly stable.

\end{proof}

We again note the special case of the stationary twisted state where $\omega=0$. With only one leader we see that the phase of the follower must settle to the same phase of the leader:
\begin{align*}
    \dot{\th_k}=a_{m_k k}\frac{R}{r_k}\sin(\th_{m_k}-\th_k)=0,
\end{align*}
for all $a_{m_k k}>0$. The amplitude dynamics provide the fixed-point map
\begin{align*}
    F(r_k)&=\frac{1}{\tau}\left(1-\frac{r_k^2}{\b^2}\right)r_k+a_{m_k k}(\cos(\th_{m_k}-\th_k)R-r_k)=0,\\
    &=\frac{1}{\tau}\left(1-\frac{r_k^2}{\b^2}\right)r_k+a_{m_k k}(R-r_k)=0.
\end{align*}
From this, we see that $r_k^{\infty}\in(R,\b)$ with $$\lim_{a_{m_k k}\to 0} r_k^{\infty}(a_{m_k k})=\beta, \qquad \lim_{a_{m_k k}\to \infty} r_k^{\infty}(a_{m_k k})=R.$$

\subsection{Leader-Follower discussion}
These results are reminiscent of the minimal coupling strength needed for the synchronization of Kuramoto (and Stuart-Landau) oscillators with heterogeneous natural frequencies. Indeed, the rotation of the leading agents, induced by the graph, leads to the agents (viewed from the perspective of the followers) as having natural frequency $\omega$, despite actually being identical oscillators.

In this way, we see that agents with weak connectivity to the leaders relative to the rotation speed of the twisted state cannot synchronize; rather, as the leader agents rotate it creates a periodic forcing on the follower so that it fluctuates periodically. As $a_{m_k k}$ approaches the critical threshold, the follower begins to rotate as well, but at a fluctuating rotation speed slower than that of the twisted state.

Such below threshold states provide what is known as a chimera state where a fraction of the popolution of oscillators produce a common behavior, while the rest remain desynchronized. As far as the authors are aware, both the emergence of the \textit{rotational twisted state} as well as this chimera state for the Kuramoto and Stuart-Landau oscillator systems is novel. Indeed, the common behavior produced in the chimera state is not full synchronization, but a rotational twisted state. Of further interest is the fact, that all oscillators have natural frequency zero, implying the desire for all of the oscillators \textit{not} to rotate. Despite having a connected adjacency matrix, there exists a stable asymptotic state which is given by a fixed rotation speed in one portion of the population and periodically forced desynchronization in the other portion of the population.

We highlight the fact that this chimera state cannot exist for symmetric interleader connectivity matrices since they cannot produce the subgroup rotational dynamics. The graph-induced rotations are necessary for seeing this particular phenomenon.

\section{Numerical Simulations}\label{s:7}

The final section is devoted to a numerical investigation of each of the regimes we have analytically studied.

\subsection{Basins of attraction for rotational and synchronized states.}\label{Sec:sim1}

In this first subsection, we illustrate the co-existence of rotational and synchronized states for the same parameters. 
In the following simulations, the parameters are chosen to be $\beta=1$, $\tau=0.1$, $N=15$, and $A$ is the circulant matrix generated by the vector $a = (A_{1j})_{j\in\{1,\cdots,N\}} = (0,5,1,2,1,0,\cdots,0)$. The corresponding interaction network is represented in Figure \ref{Fig:Network}.
This choice of parameters allows us to compute the phase transition parameter $\tau^* = 0.3077$, as well as the rotation radius $R = 0.8216$.

We denote $x_j := \mathrm{Re}(z_j)$ and $y_j := \mathrm{Im}(z_j)$.
For conciseness of notation, we sometimes identify $z_j\in\C$ with its vector form in $z_j\in\R^2$ given by $z_j=(x_j,y_j)$.
We also study the evolution of the following three order parameters: 
\begin{itemize}
\item the polarization functional 
\[
P(t) = \left|\frac{1}{N}\sum_{j=1}^N \frac{\dot z_j(t)}{|\dot z_j(t)|}\right|;
\]
\item the milling functional
\[
M(t) = \left|\frac{1}{N}\sum_{j=1}^N \frac{(Z_j(t)-Z_c(t))\wedge \dot Z_j(t)}{|Z_j(t)-Z_c(t)| |\dot Z_j(t)|}\right|,
 \]
 in which for each $j$, $Z_j(t):= (x_j(t),y_j(t),0)\in\R^3$ denotes the extensions of $z_j(t)$ to $\R^3$;
\item the radial swarming functional
\[
S(t) = \frac{1}{N}\sum_{j=1}^N\frac{ \left|(z_j(t)-z_c(t))\cdot \dot z_j(t)\right|}{|z_j(t)-z_c(t)| |\dot z_j(t)|}.
 \]
 \end{itemize}

Note that by definition $0\leq P,M,S\leq 1$ for all time $t$. 
Moreover, $P = 1$ if and only if there exists $u\in\R^2$ such that for all $j,k\in\{1,\cdots,N\}$, $\frac{\dot z_j}{|\dot z_j|}=u$, i.e. all agents have aligned velocities.  
The milling functional $M$ satisfies $M = 1$ if and only if for all $j\in\{1,\cdots,N\}$, $\dot z_j$ is perpendicular to $z_j-z_c$, and there exists $\alpha\in\{-1,1\}$ (common to all $j$) such that the angle between the two vectors satisfies $(z_j-z_c, \dot z_j)=\alpha \frac{\pi}{2}$.
The swarming functional satisfies $S=1$ if and only if for all $j\in\indices$, $\dot z_j$ and $z_j-z_c$ are co-linear, that is if the velocities are all radial with respect to the center of mass. 

\begin{figure}[h!]
\centering
\includegraphics[width=0.5\textwidth]{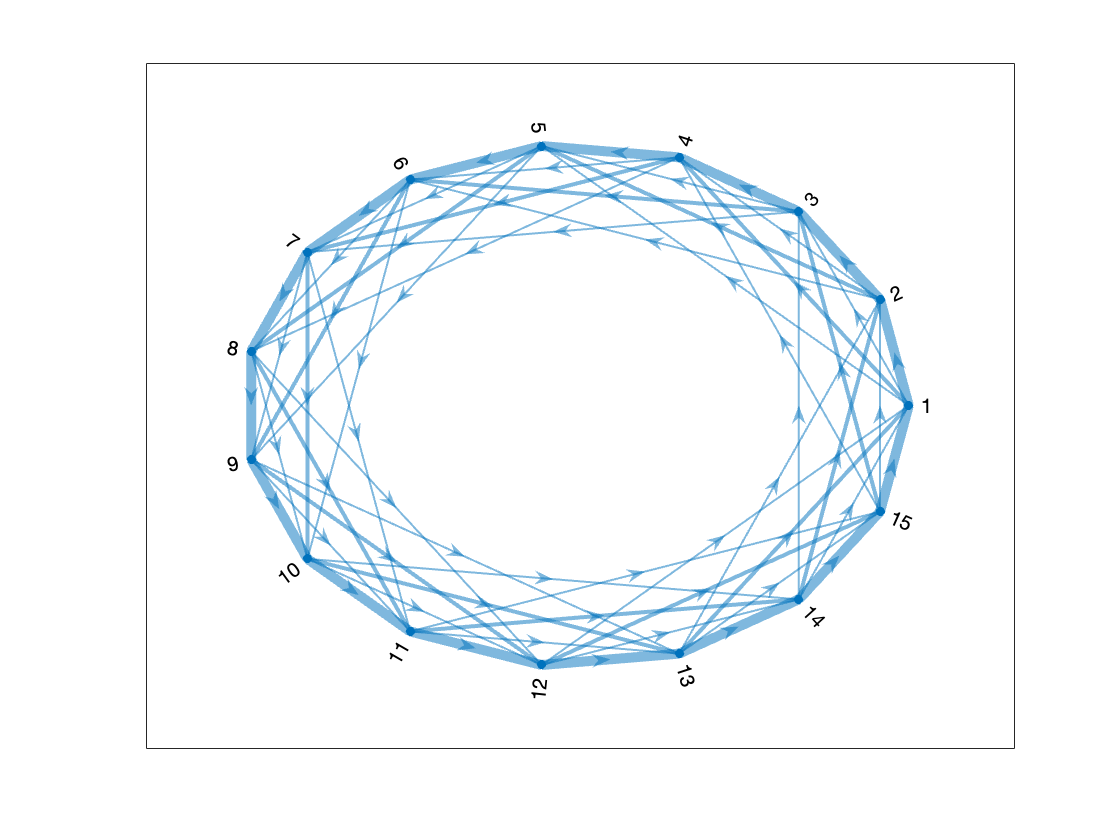}
\caption{Interaction network corresponding to the circulant matrix $A$ generated by the vector $a=(0,5,1,2,1,0,\cdots,0)$. Line thickness is proportional to the edge weight.}\label{Fig:Network}
\end{figure}

Figure \ref{Fig:StableMilling} shows the evolution of the system for a choice of initial positions in the basin of attraction of the rotational state. The radii are shown to converge to the equilibrium rotation radius $R$, the angle differences $\theta_{j+1}-\theta_j$ converge to the equilibrium value of $\frac{2\pi}{N}$, and the milling order parameter $M$ is shown to converge to 1.
\begin{figure}[h!]
\centering
\includegraphics[width=0.32\textwidth, trim = 2cm 0.5cm 2cm 0.5cm, clip=true]{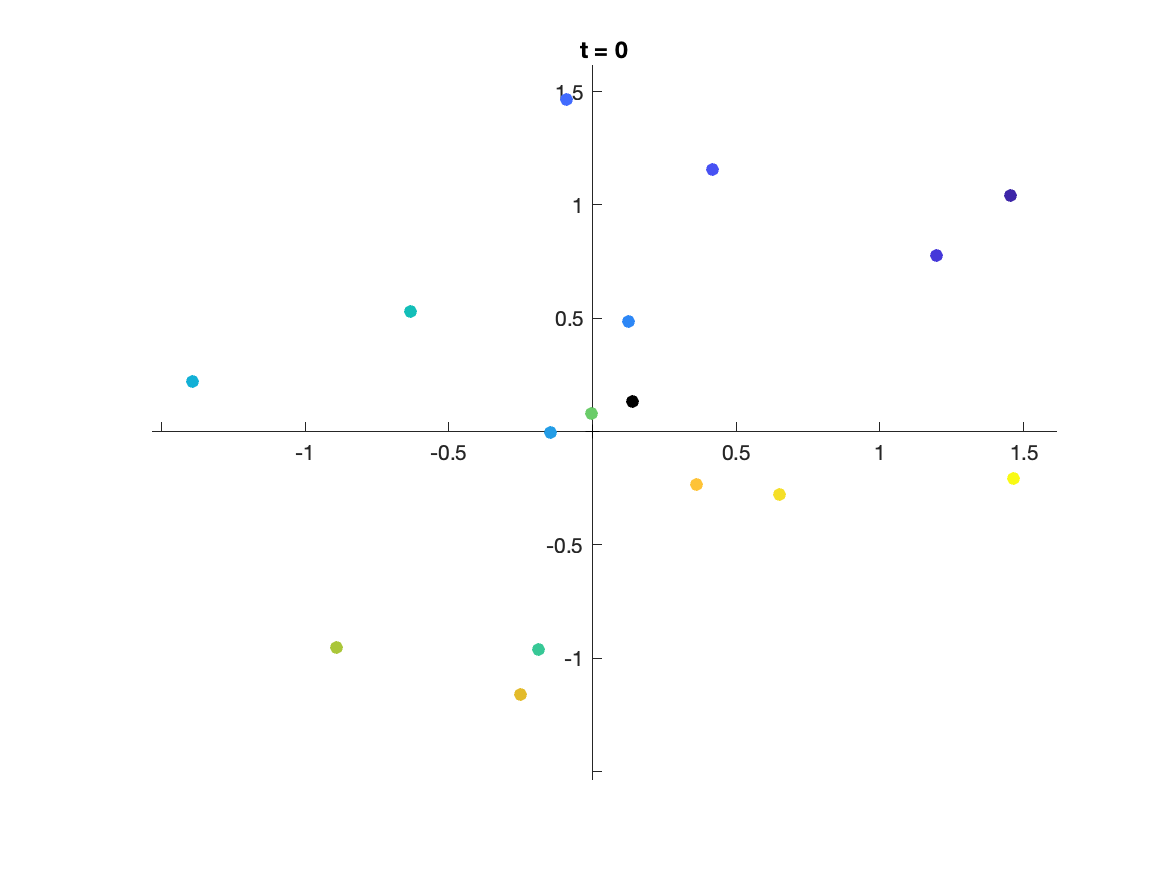}
\includegraphics[width=0.32\textwidth, trim = 2cm 0.5cm 2cm 0.5cm, clip=true]{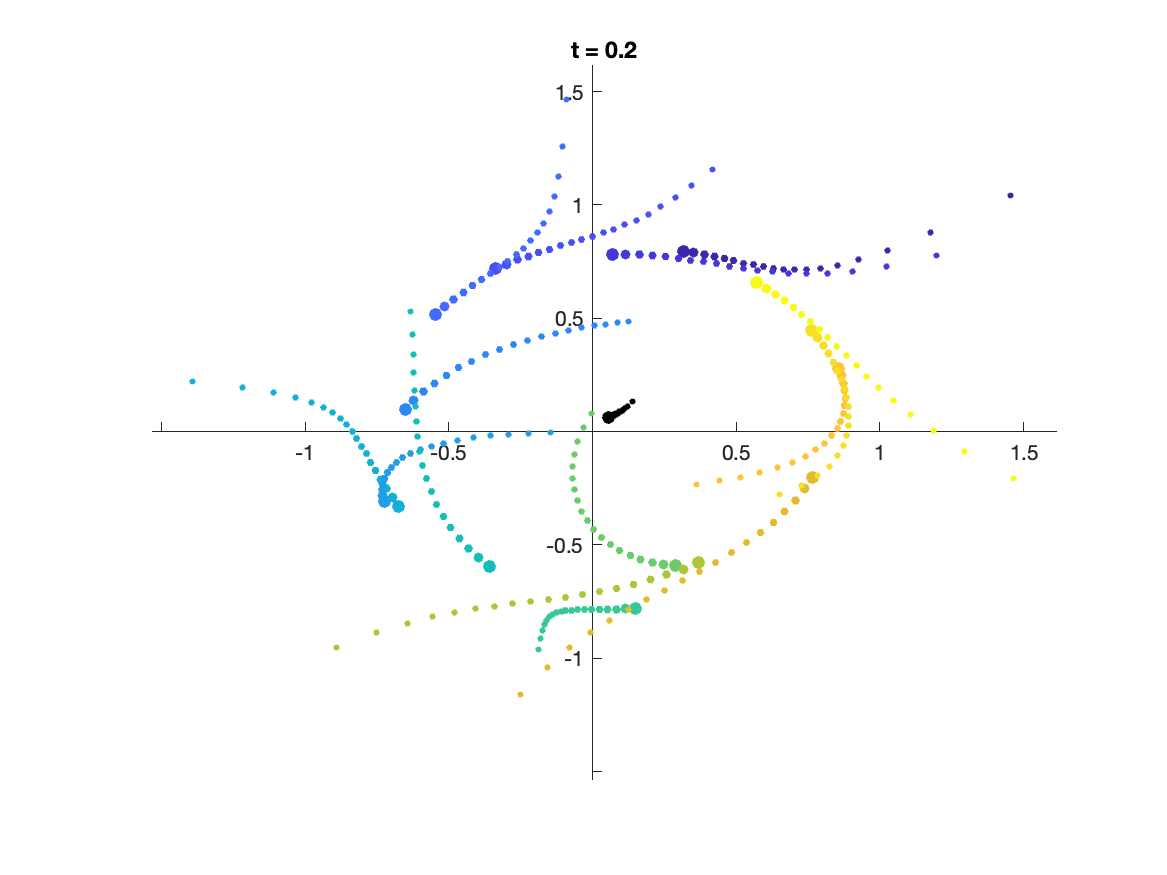}
\includegraphics[width=0.32\textwidth, trim = 2cm 0.5cm 2cm 0.5cm, clip=true]{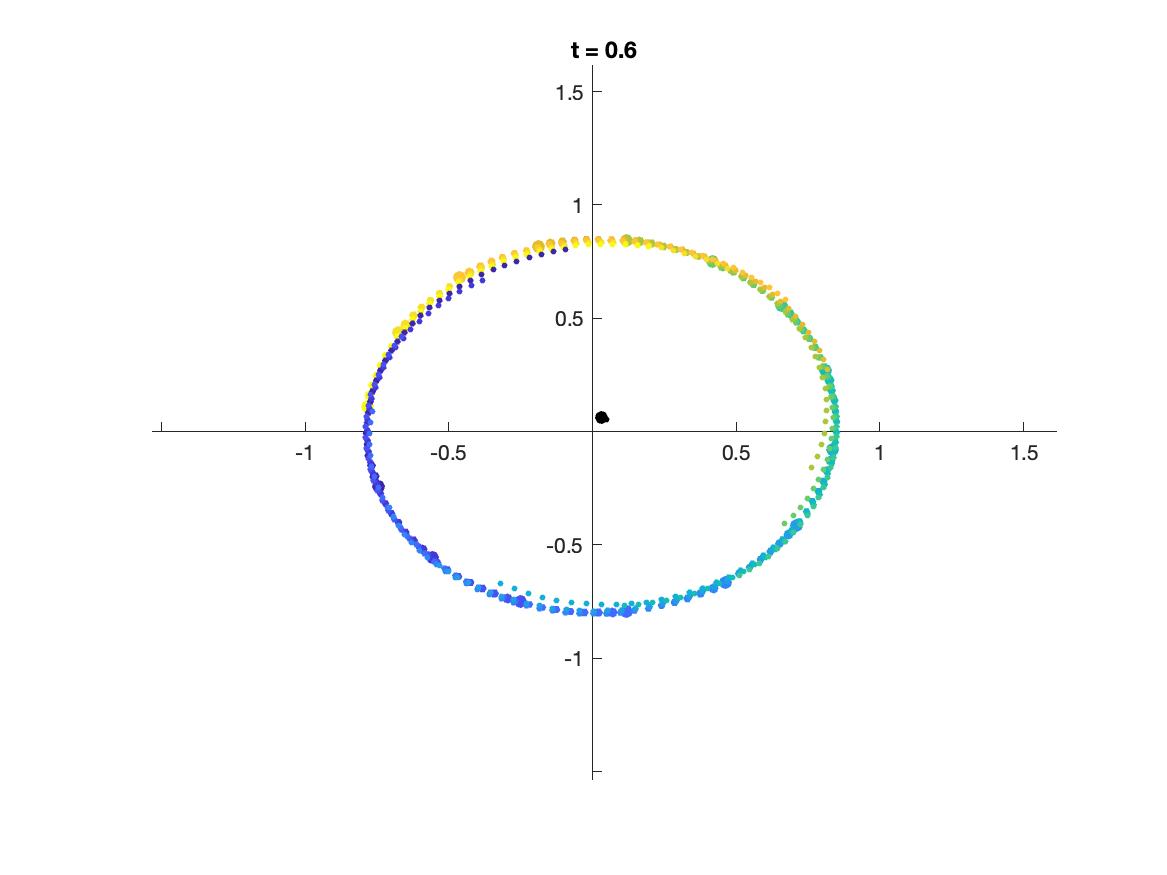}\\
\includegraphics[width=0.32\textwidth]{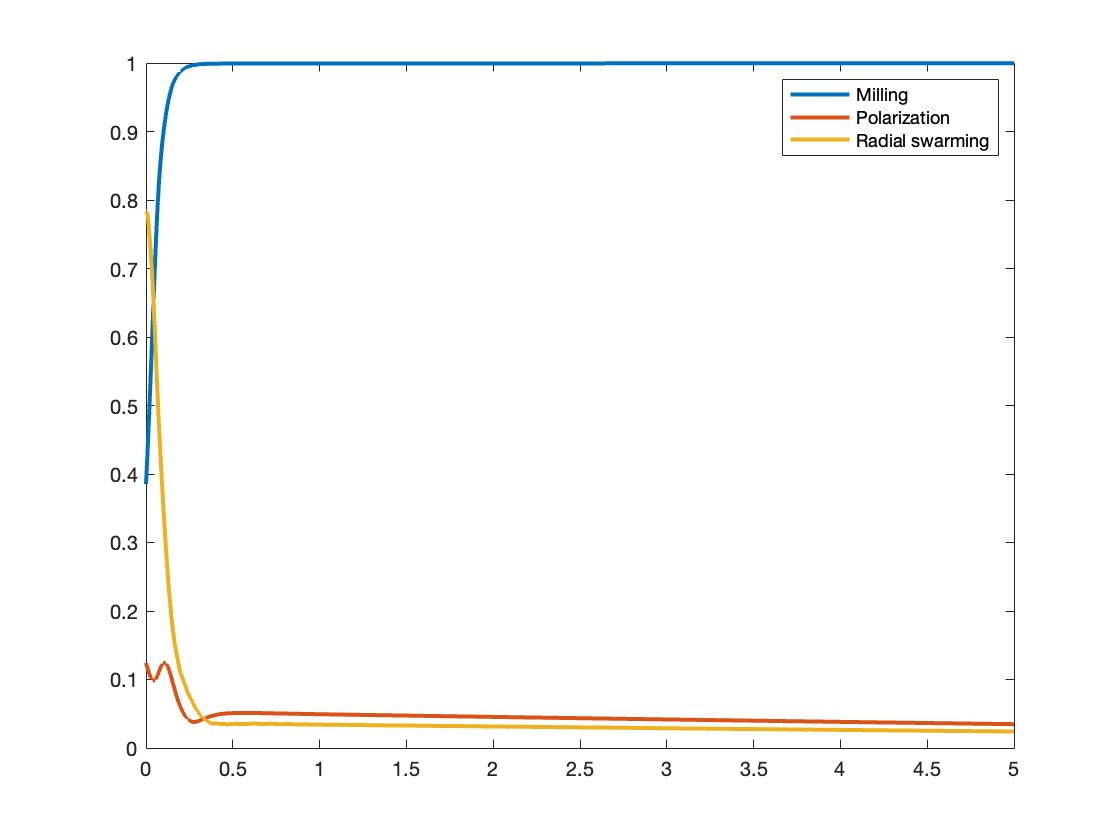}
\includegraphics[width=0.32\textwidth]{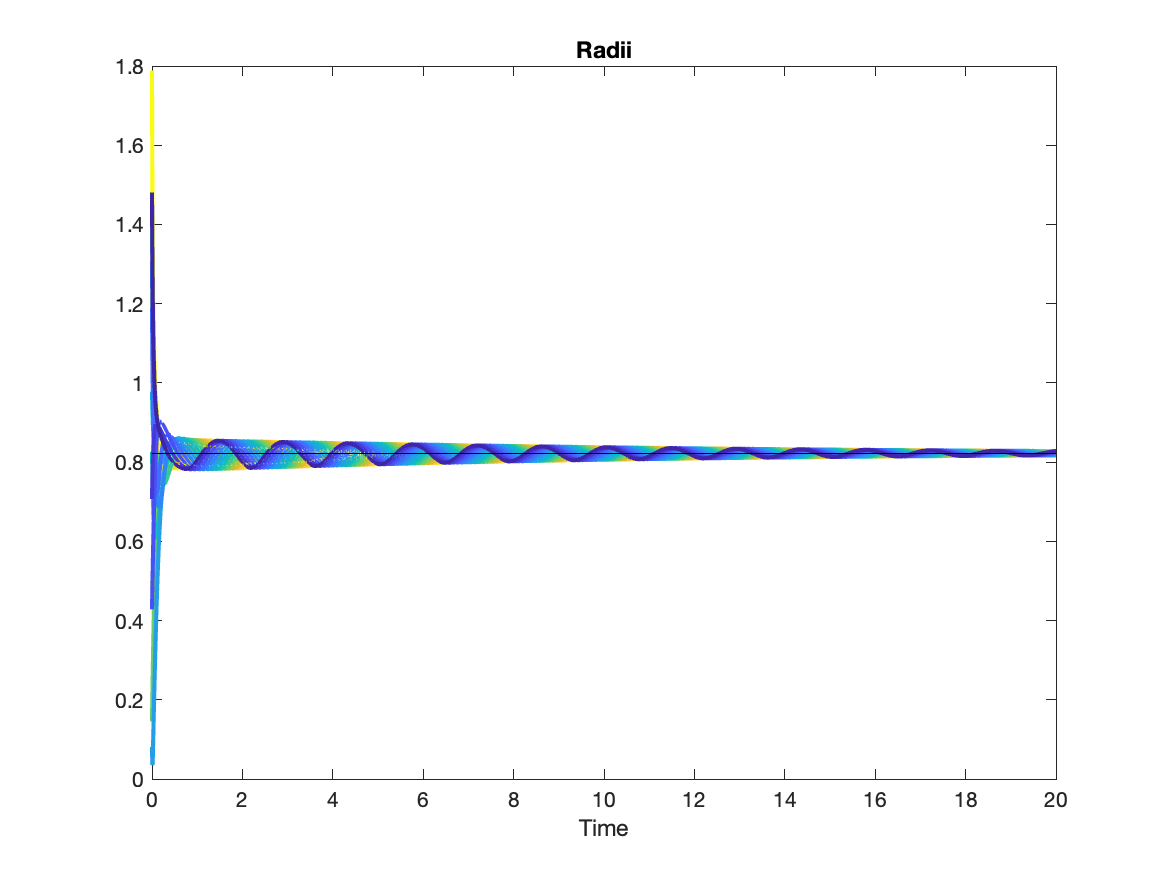}
\includegraphics[width=0.32\textwidth]{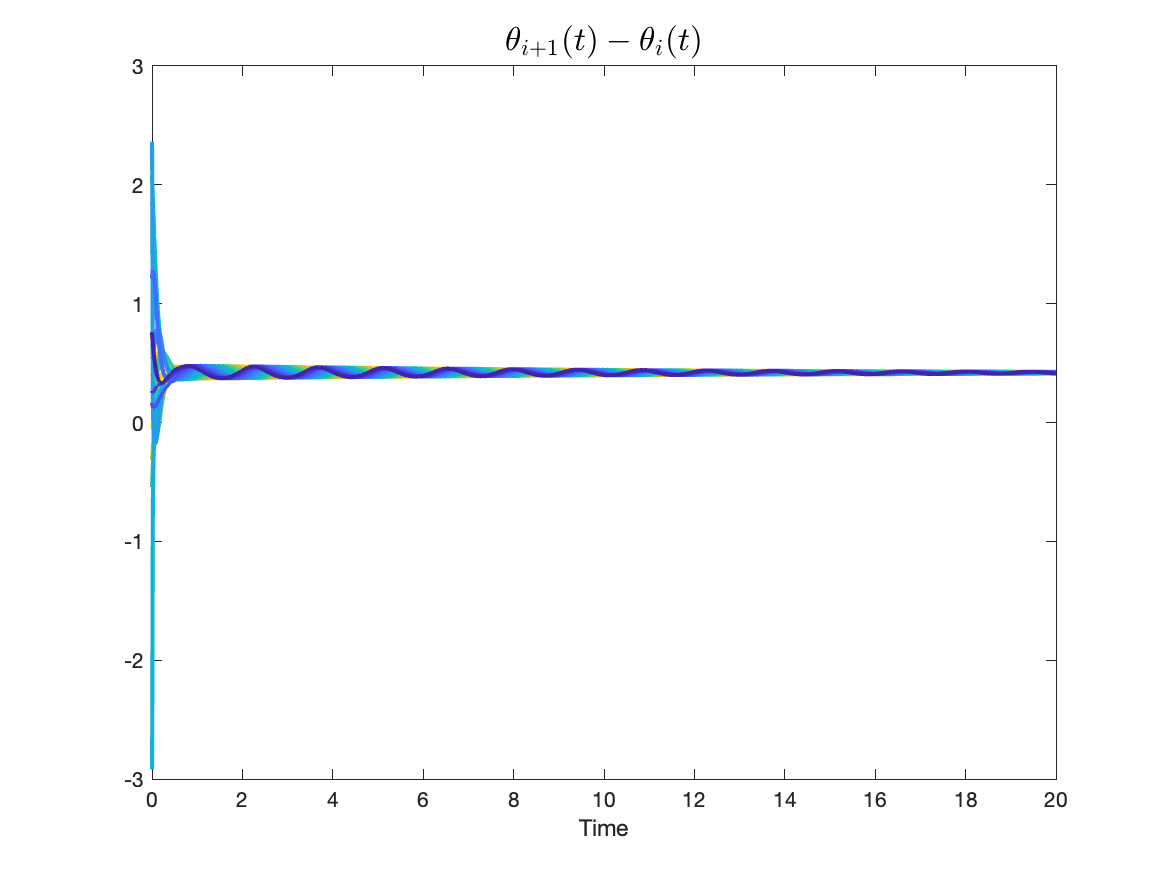}
\caption{Top row. Particles' trajectories at times $t=0$, $t=0.2$ and $t=0.8$. Each agent is represented in a different color, its last position is represented by the largest dot, and its previous positions by smaller dots. The black dot represents the trajectory of the system's center of mass.\\
Bottom row. Left: Evolution of the three order parameters $M$, $P$ and $S$. Center: Evolution of each agent's radius $r_j$. Right: Evolution of the $N$ angle differences $\theta_{j+1}-\theta_j$.   
}\label{Fig:StableMilling}
\end{figure}

Figure \ref{Fig:Sync} shows the evolution of the system for a choice of initial positions in the basin of attraction of the synchronized state, for the same parameters as in Fig.\ref{Fig:StableMilling}. The radii are shown to converge to the target radius $\beta=1$ and the angle differences $\theta_{j+1}-\theta_j$ converge to $0$. The radial swarming order parameter $S$ is shwon to converge to 1, after a brief polarized transition phase during which the particles' velocities are close to being aligned.

\begin{figure}[h!]
\centering
\includegraphics[width=0.24\textwidth, trim = 2cm 0.5cm 2cm 0.5cm, clip=true]{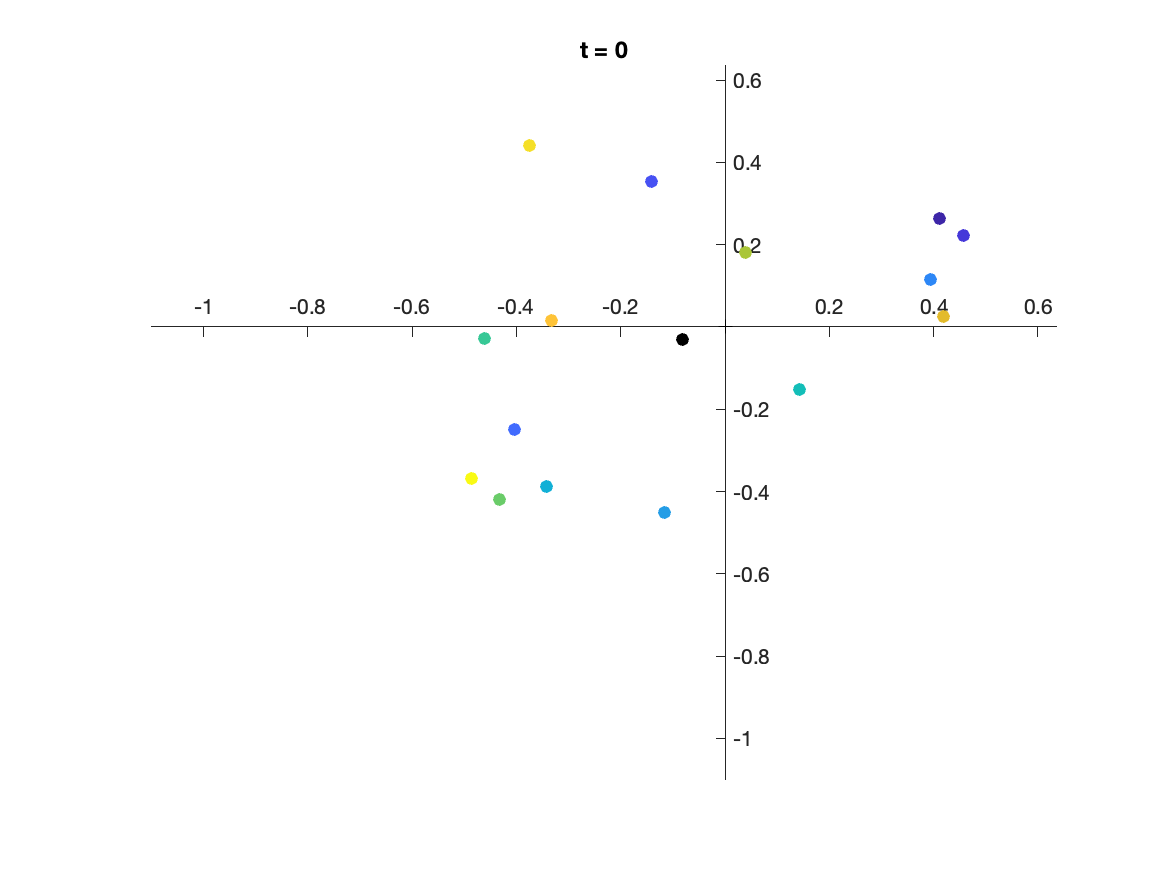}
\includegraphics[width=0.24\textwidth, trim = 2cm 0.5cm 2cm 0.5cm, clip=true]{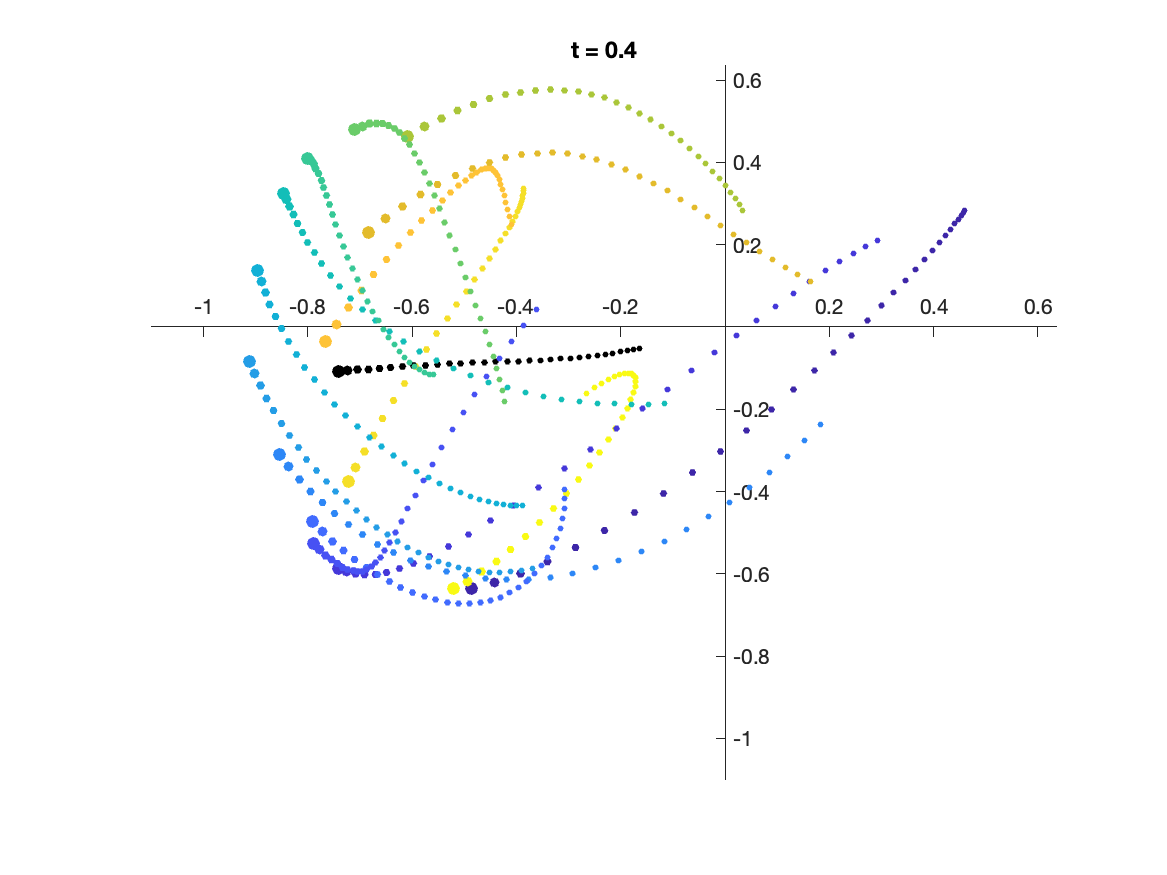}
\includegraphics[width=0.24\textwidth, trim = 2cm 0.5cm 2cm 0.5cm, clip=true]{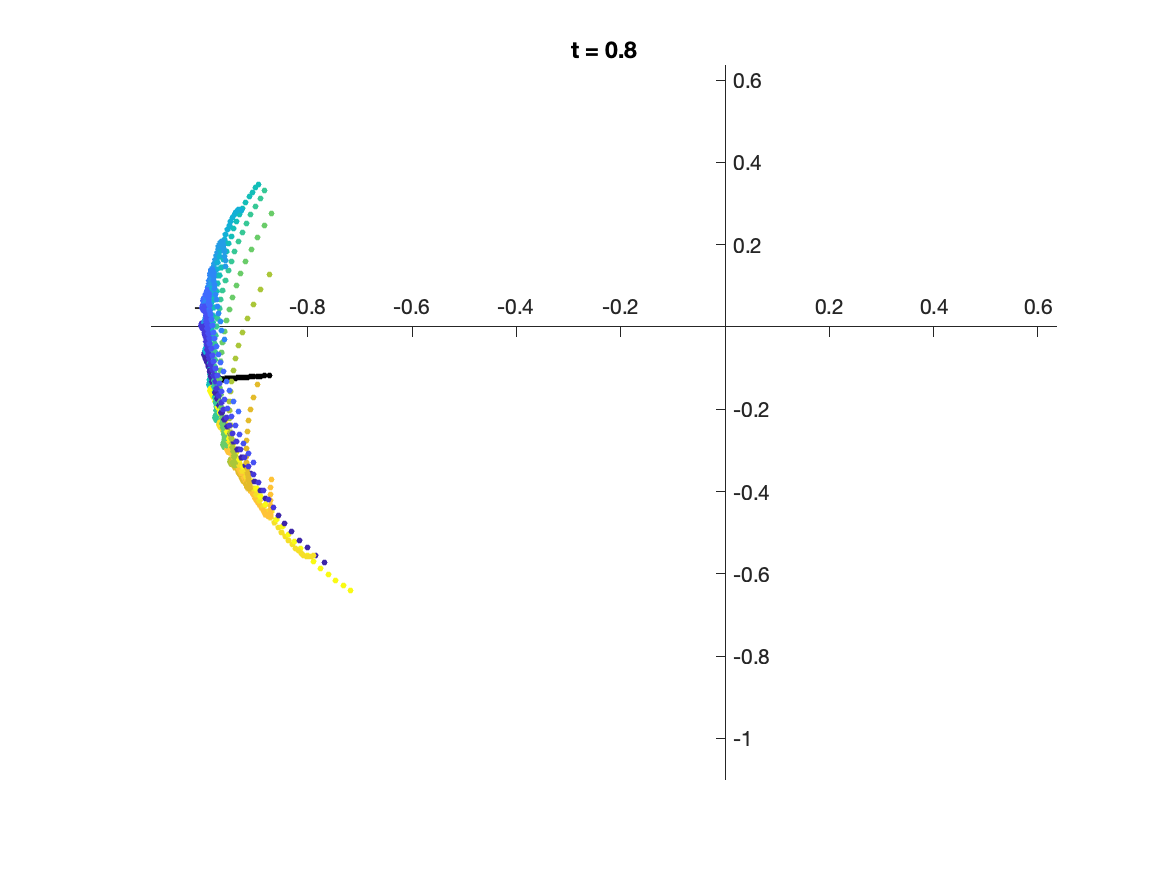}
\includegraphics[width=0.24\textwidth, trim = 2cm 0.5cm 2cm 0.5cm, clip=true]{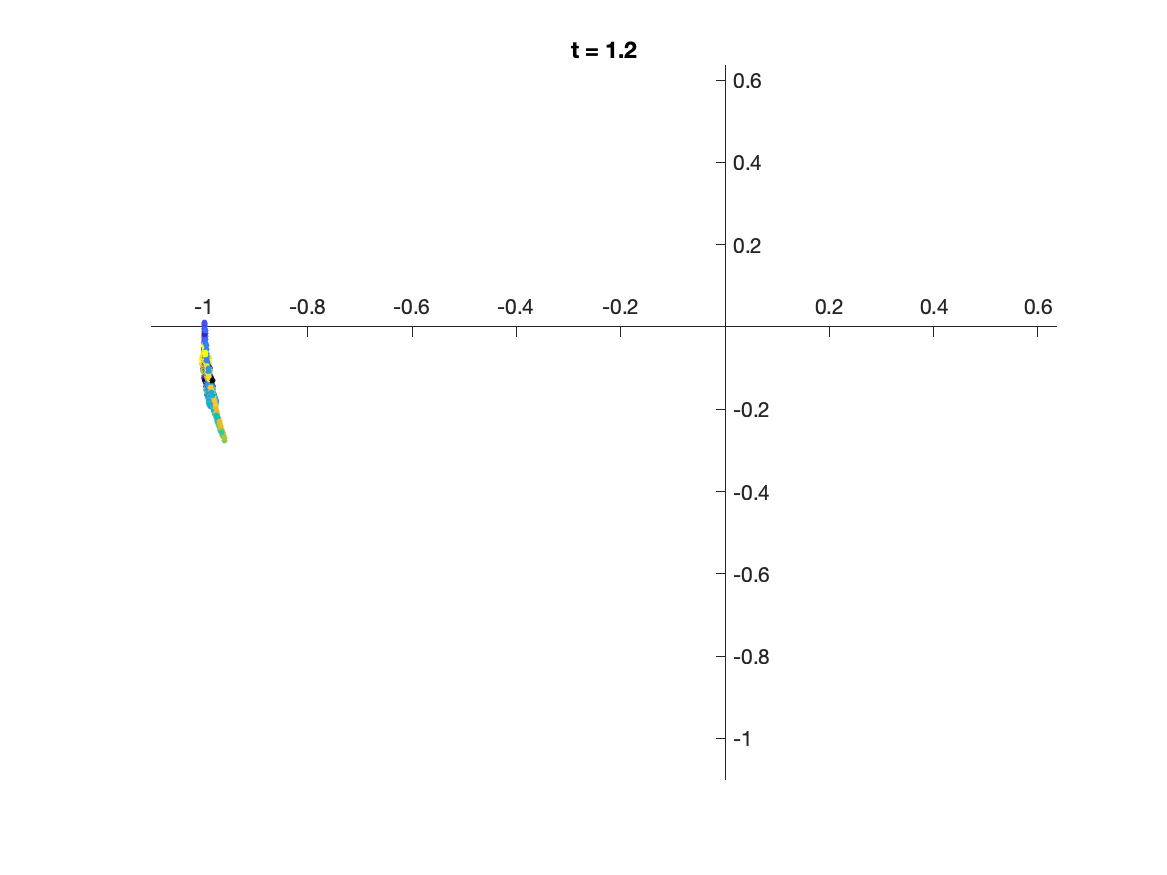}\\
\includegraphics[width=0.32\textwidth]{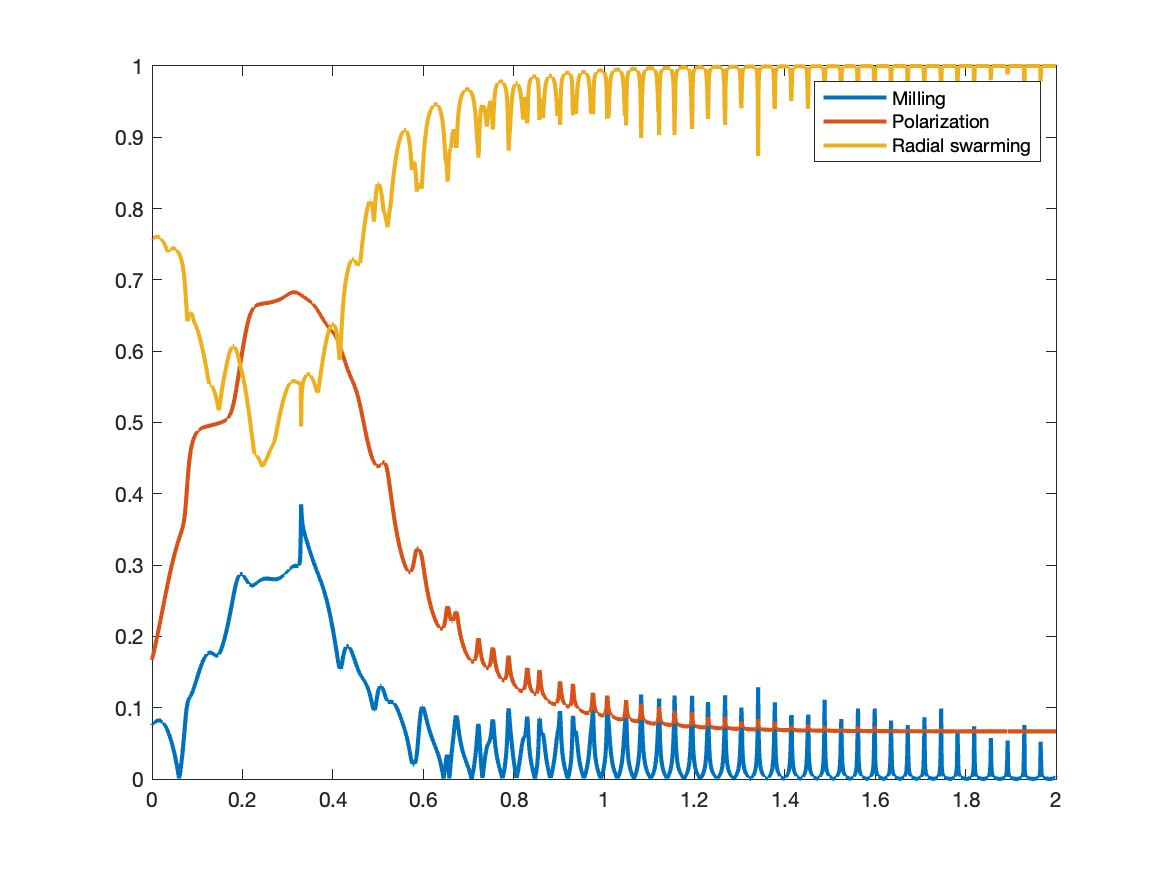}
\includegraphics[width=0.32\textwidth]{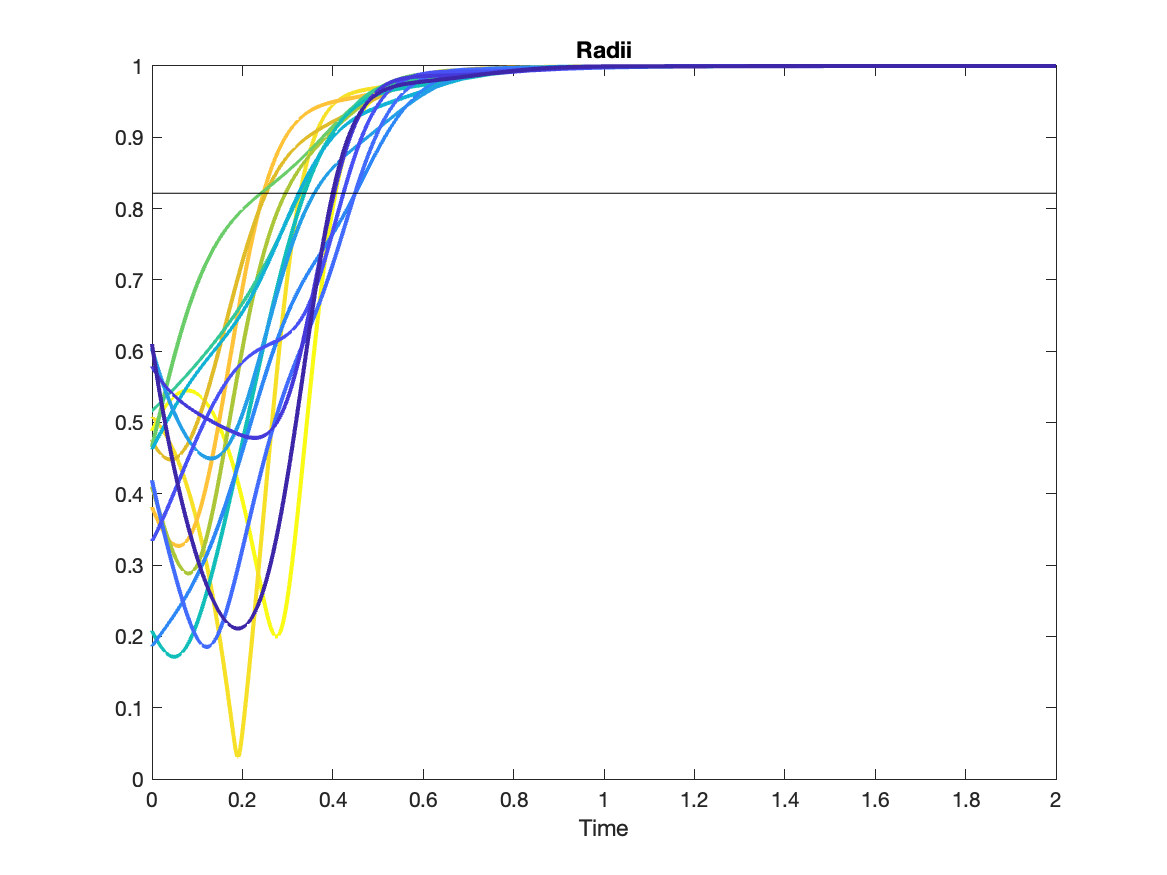}
\includegraphics[width=0.32\textwidth]{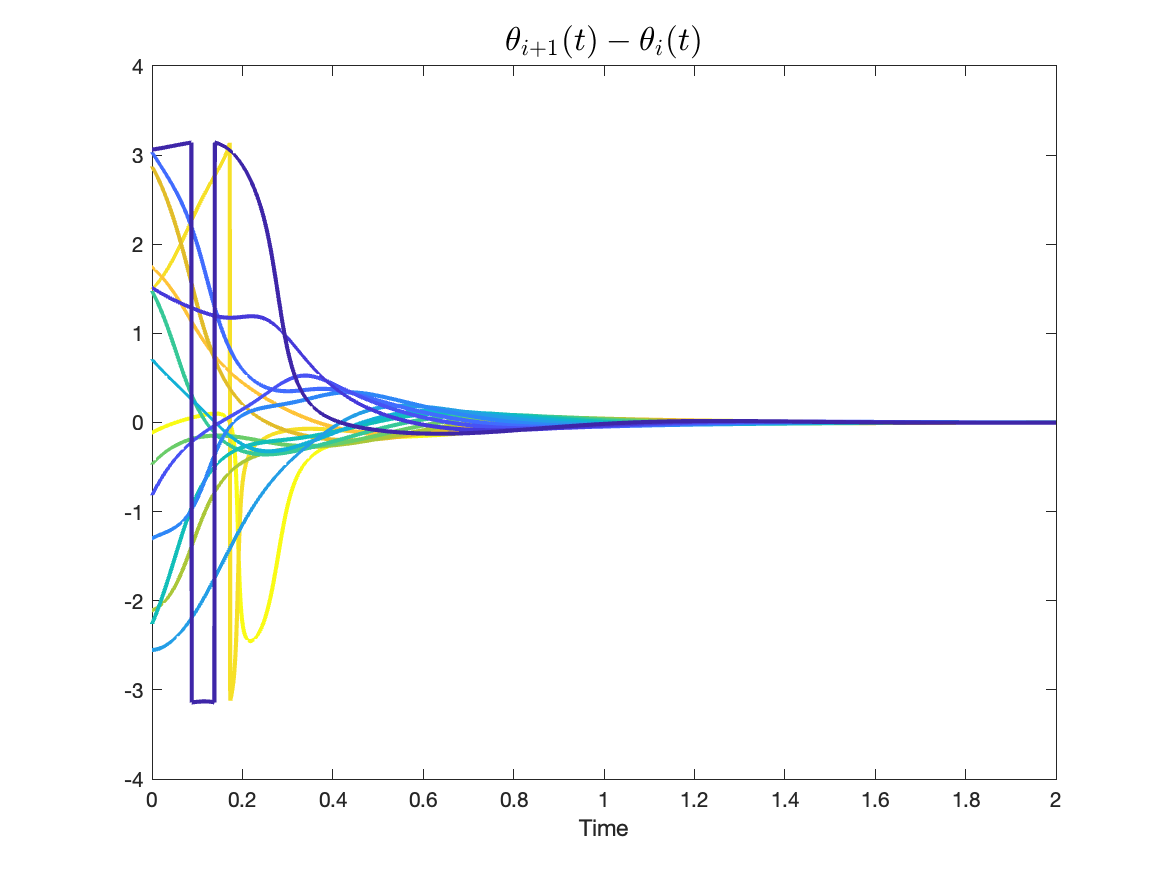}
\caption{Top row. Particles' trajectories at times $t=0$, $t=0.2$ and $t=0.8$. Each agent is represented in a different color, its last position is represented by the largest dot, and its previous positions by smaller dots. The black dot represents the trajectory of the system's center of mass.\\
Bottom row. Left: Evolution of the three order parameters $M$, $P$ and $S$. Center: Evolution of each agent's radius $r_j$. Right: Evolution of the $N$ angle differences $\theta_{j+1}-\theta_j$.   
}\label{Fig:Sync}
\end{figure}

\subsection{Inhomogeneous distribution of $\beta_j$}\label{Sec:sim2}

Figure \ref{Fig:Beta} shows the evolution of the system for a non-exchangeable system of particles in which the target radii $(\beta_j)_{j\in\{1,\cdots,N\}}$ are non identical. More specifically,
let $(B_j)_{j\in\{1,\cdots,N\}}$ be i.i.d. random variables uniformly distributed in the interval $[-0.5,0.5]$.
We construct heterogeneous target radii as follows: $\beta_j=\beta+\varepsilon B_j$.
The other parameters are chosen as in Section \ref{Sec:sim1}, i.e. $\beta=1$, $\tau=0.1$, $N=15$, and $A$ is the circulant matrix whose first row is given by $(A_{1j})_{j\in\{1,\cdots,N\}} = [0,5,1,2,1,0,\cdots,0]$.

The radii are shown to converge to different values $R_j$, and the angle differences $\theta_{j+1}-\theta_j$ also converge to different values via a phase of damped oscillations.
The milling order parameter $M$ is shown to converge to 1.
\begin{figure}[h!]
\centering
\includegraphics[width=0.24\textwidth, trim = 2cm 0.5cm 2cm 0.5cm, clip=true]{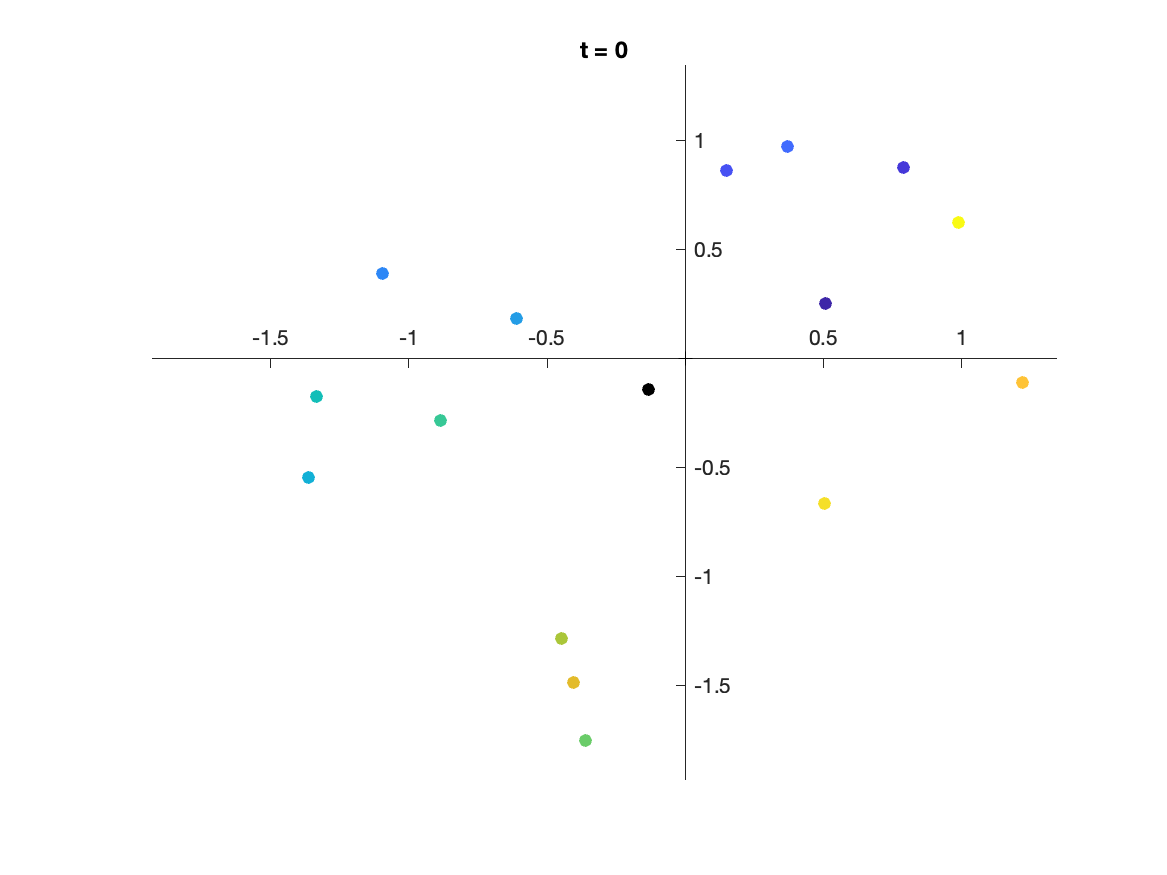}
\includegraphics[width=0.24\textwidth, trim = 2cm 0.5cm 2cm 0.5cm, clip=true]{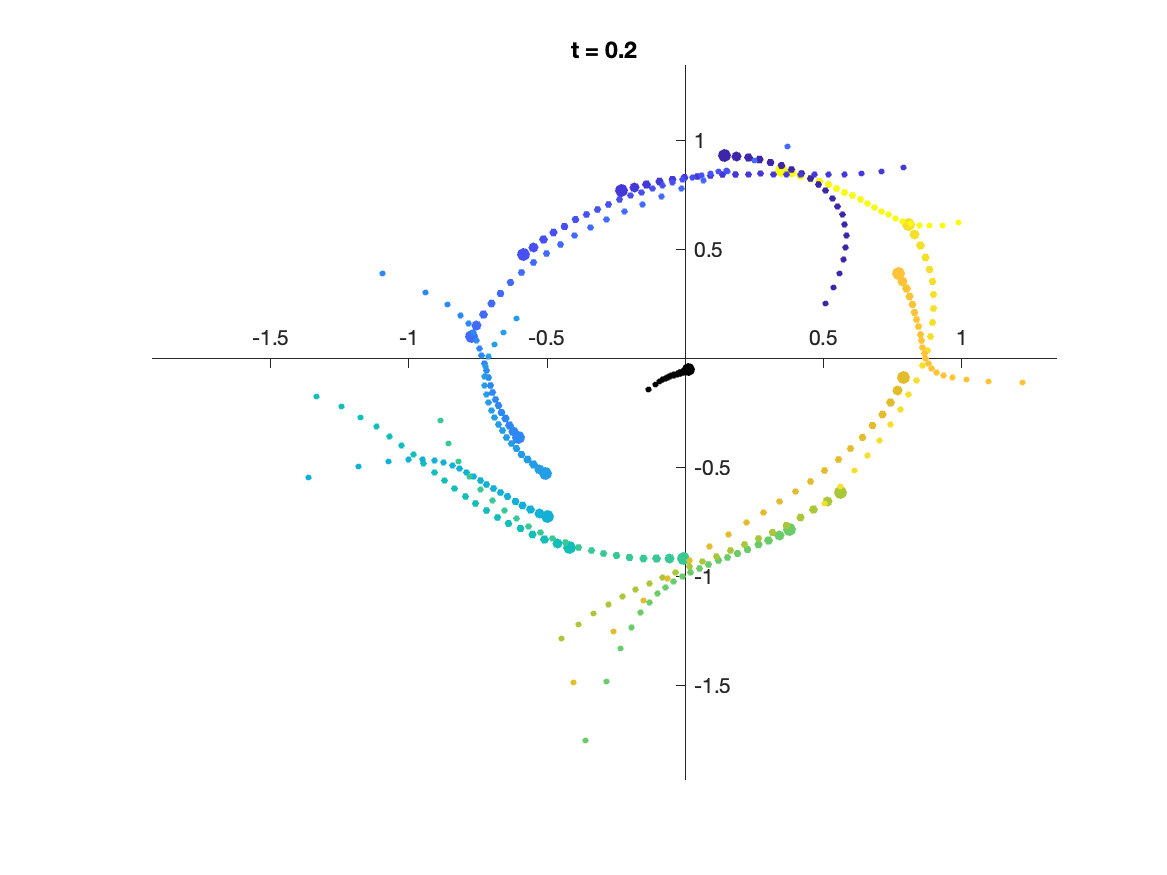}
\includegraphics[width=0.24\textwidth, trim = 2cm 0.5cm 2cm 0.5cm, clip=true]{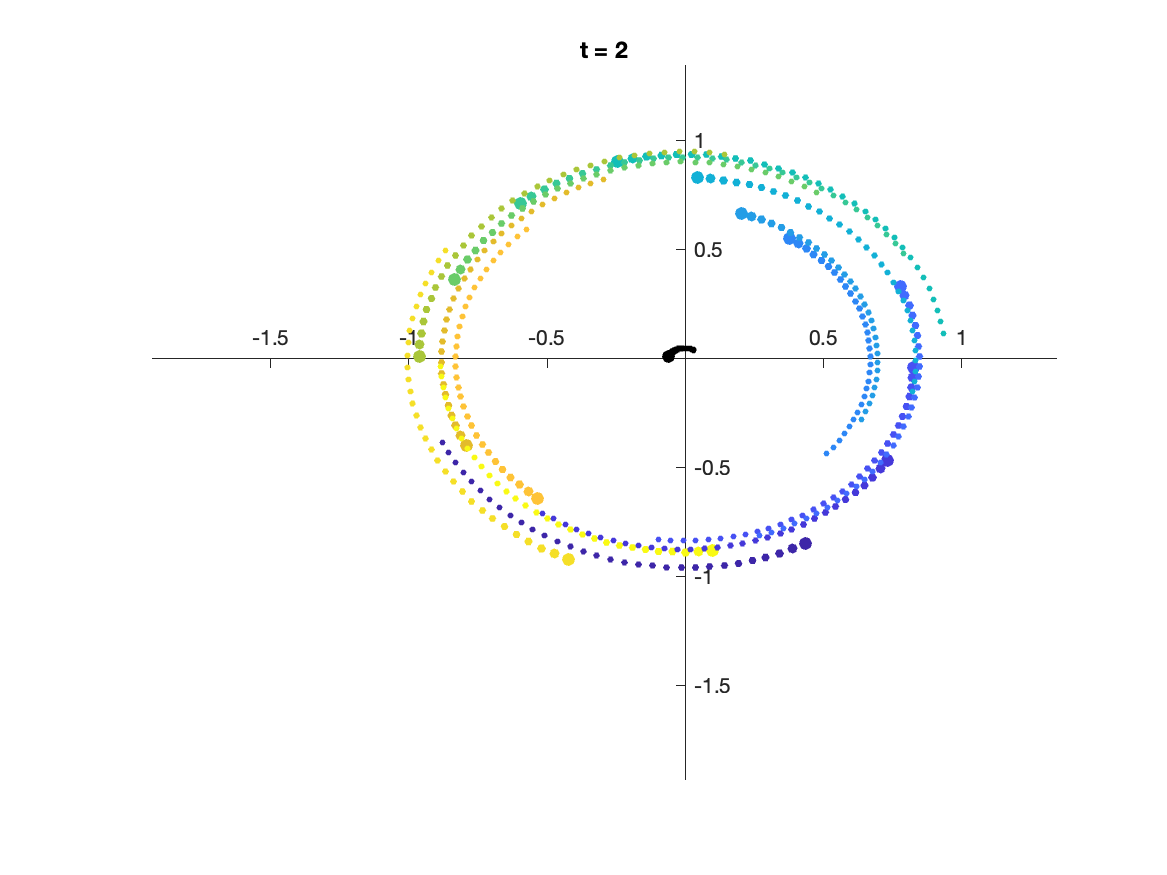}
\includegraphics[width=0.24\textwidth, trim = 2cm 0.5cm 2cm 0.5cm, clip=true]{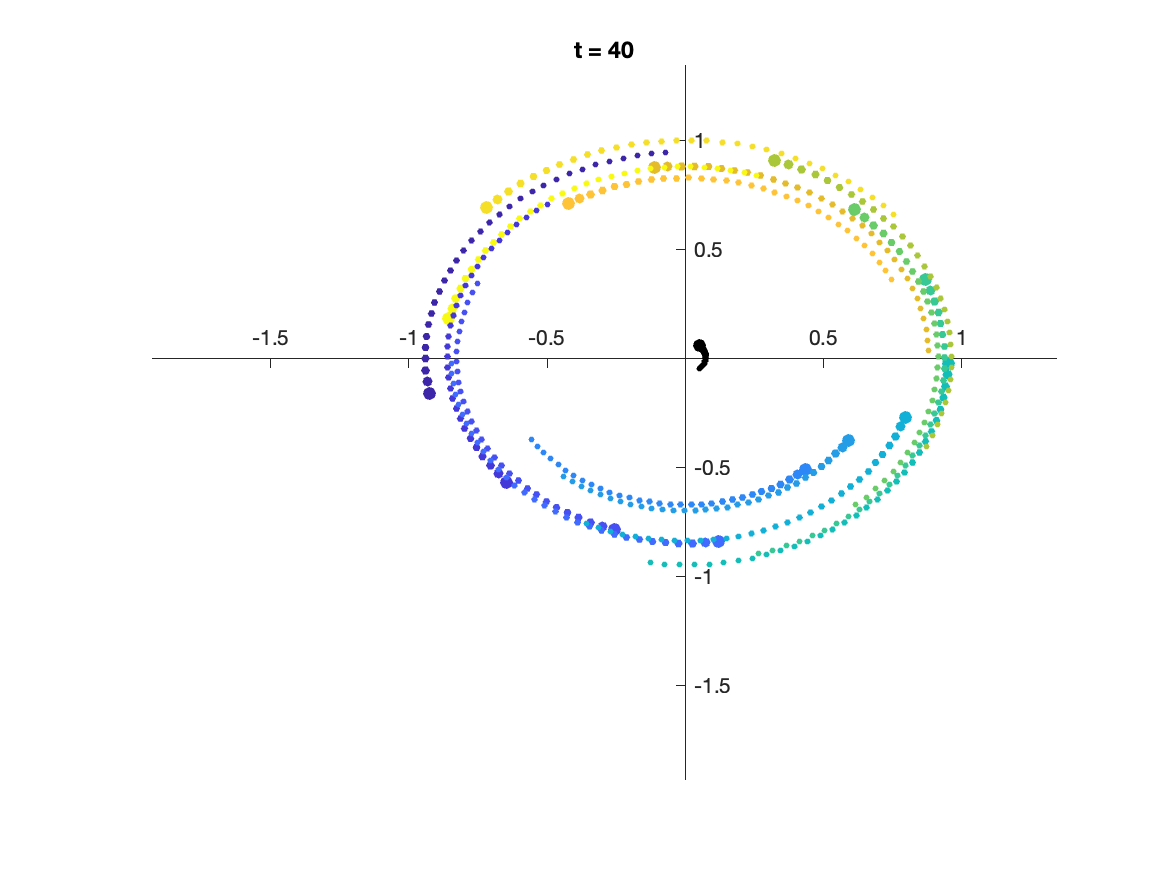}\\
\includegraphics[width=0.32\textwidth]{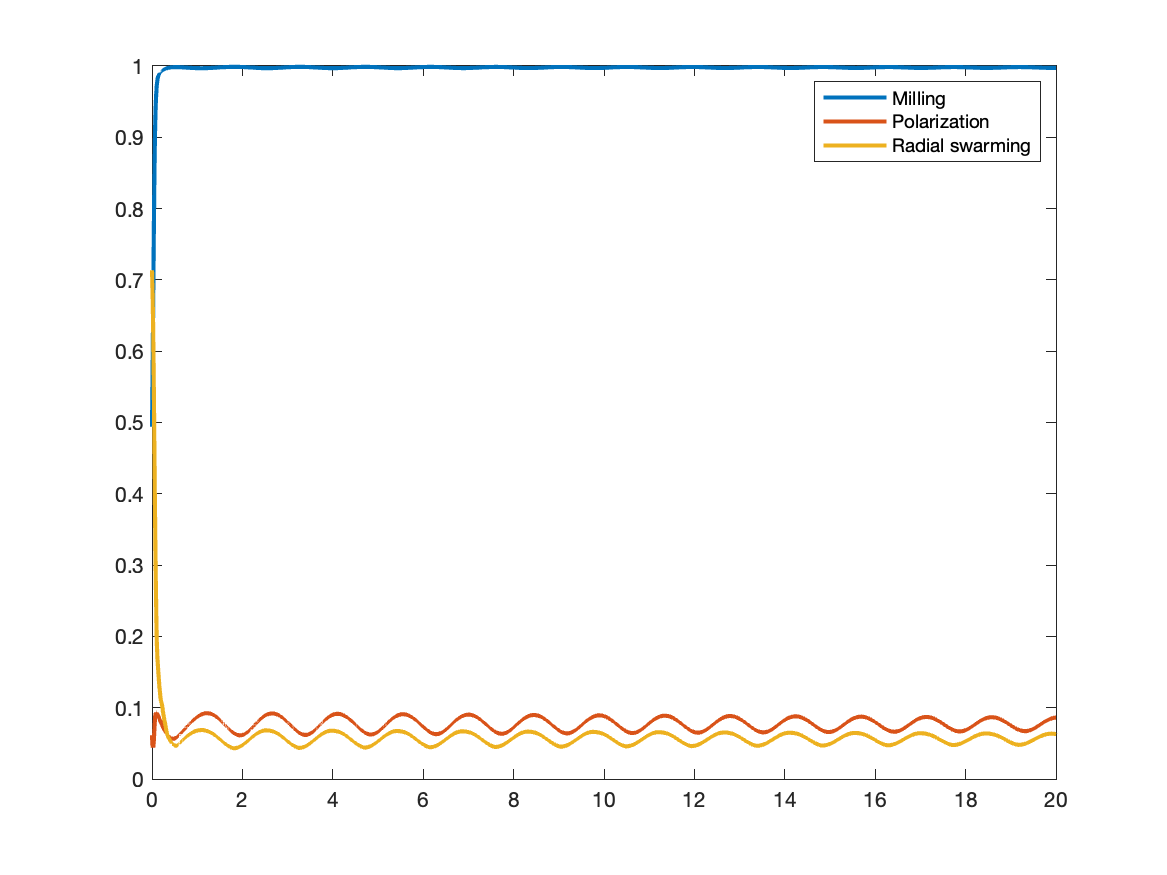}
\includegraphics[width=0.32\textwidth]{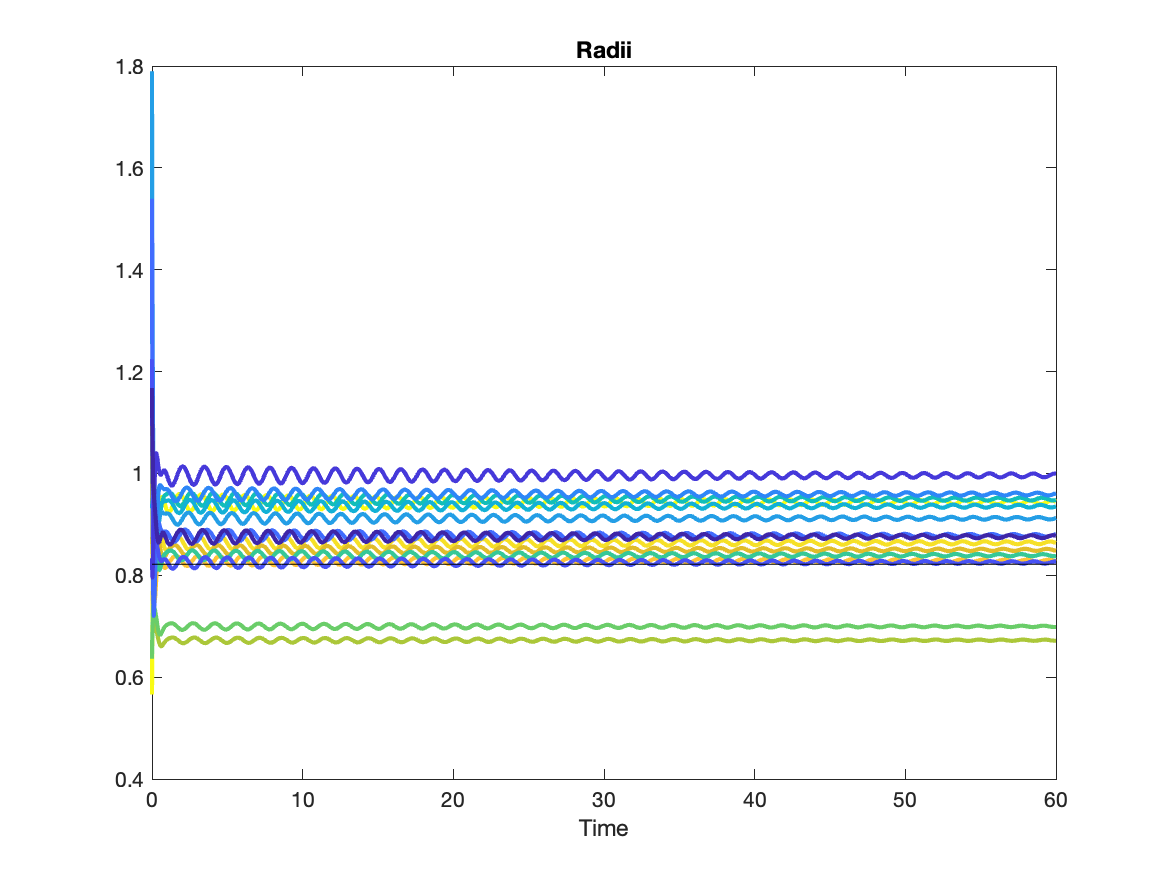}
\includegraphics[width=0.32\textwidth]{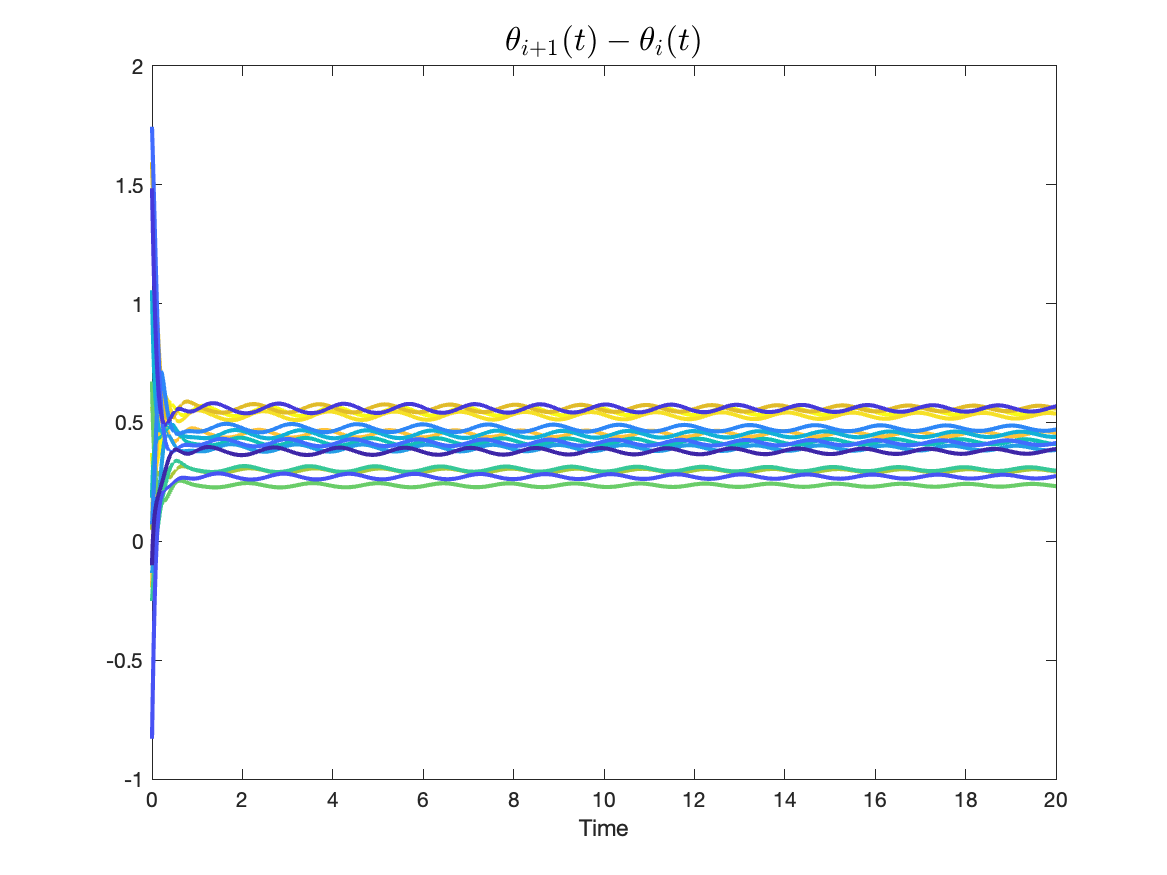}
\caption{Top row. Particles' trajectories at times $t=0$, $t=0.2$, $t=2$ and $t=40$. Each agent is represented in a different color, its last position is represented by the largest dot, and its previous positions by smaller dots. The black dots represents the trajectory of the system's center of mass.\\
Bottom row. Left: Evolution of the three order parameters $M$, $P$ and $S$. Center: Evolution of each agent's radius $r_i$. Right: Evolution of the $N$ angle differences $\theta_{i+1}-\theta_i$.   
}\label{Fig:Beta}
\end{figure}

\subsection{Follower dynamics}\label{Sec:sim3}

In this section, we illustrate the results proven in Section \ref{s:6}.
Let $N=15$, and $(A_{ij})_{i,j\in\{1,\cdots,N-1\}}$ be a circulant matrix determined by the vector $(A_{1j})_{j\in\{1,\cdots,N-1\}} = (0,4,2,1,0,\cdots,0)$. Let the $N$-th agent represent a follower, connected to the agent $j=5$ by the edge $A_{N5} = a>0$.
The corresponding network is illustrated in Fig. \ref{Fig:NetworkFollower}.

\begin{figure}[h!]
\centering
\includegraphics[width=0.32\textwidth]{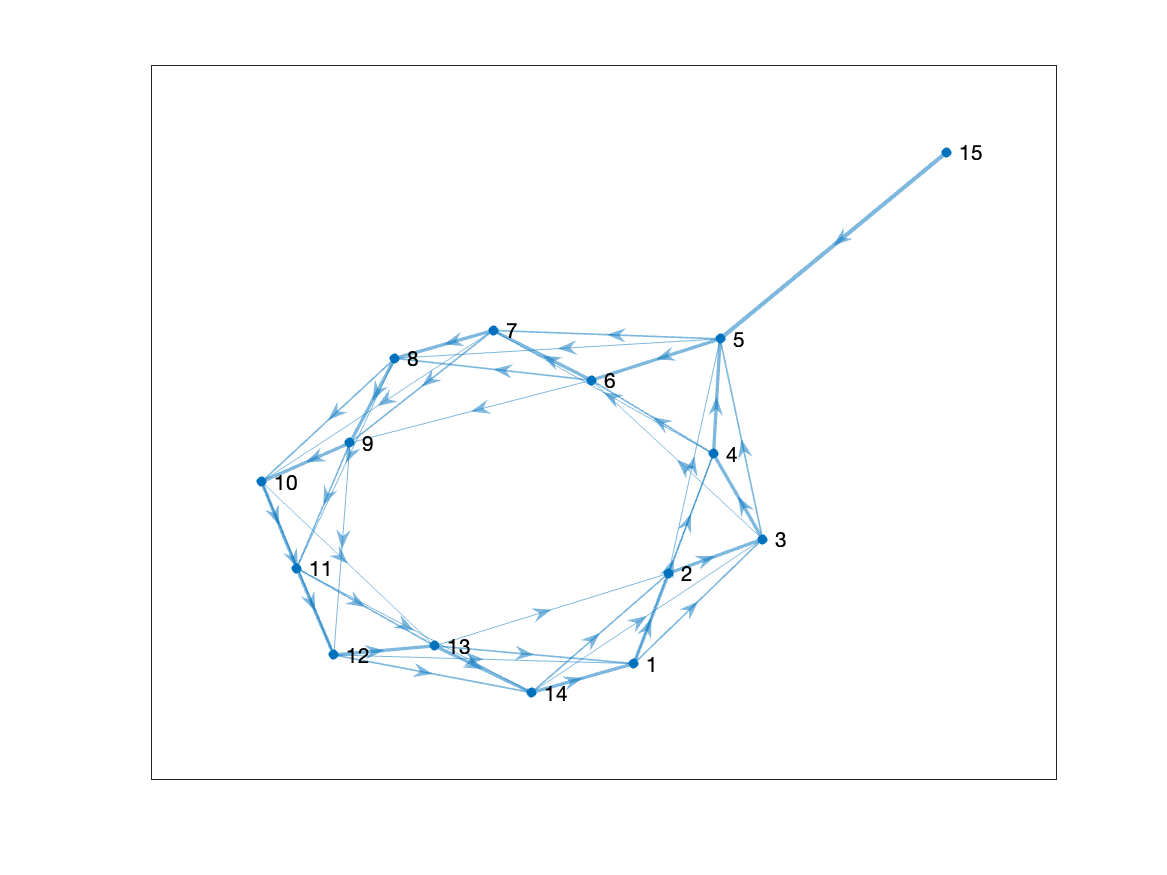}
\caption{Interaction network corresponding to the circulant matrix $A$ (Section \ref{Sec:sim3}). Line thickness is proportional to the edge weight.}\label{Fig:NetworkFollower}
\end{figure}

As in the previous sections, the other parameters are taken to be $\beta=1$ and $\tau=0.1$.
With this choice, one can compute the critical coupling strength $a^*=3.7582$ required for the follower to be able to follow the group's rotational state.
We present two simulations. In the first one, the coupling strength is taken to be $a=3<a^*$, below the critical coupling strength, and the follower is unable to follow the rotational state. In the second simulation, the coupling strength is taken to be $a=5>a^*$ is above the threshold, and the follower successfully joins the group rotation. Figure \ref{Fig:Follower} illustrates these two situations by presenting the trajectories of the leader $j=5$ and the follower $j=N$ (left column) as well as the evolution of all agents' radii and of the order parameters.

Interestingly, in the case of a successful follower ($a=5$), the follower's radius converges to a value slightly below the radius of the group.
In the case of the failed follower ($a=3$), periodic observations are observed, as the follower periodically manages to almost follow the group, before falling behind too much and swithching directions.

\begin{figure}[h!]
\centering
\includegraphics[width=0.32\textwidth]{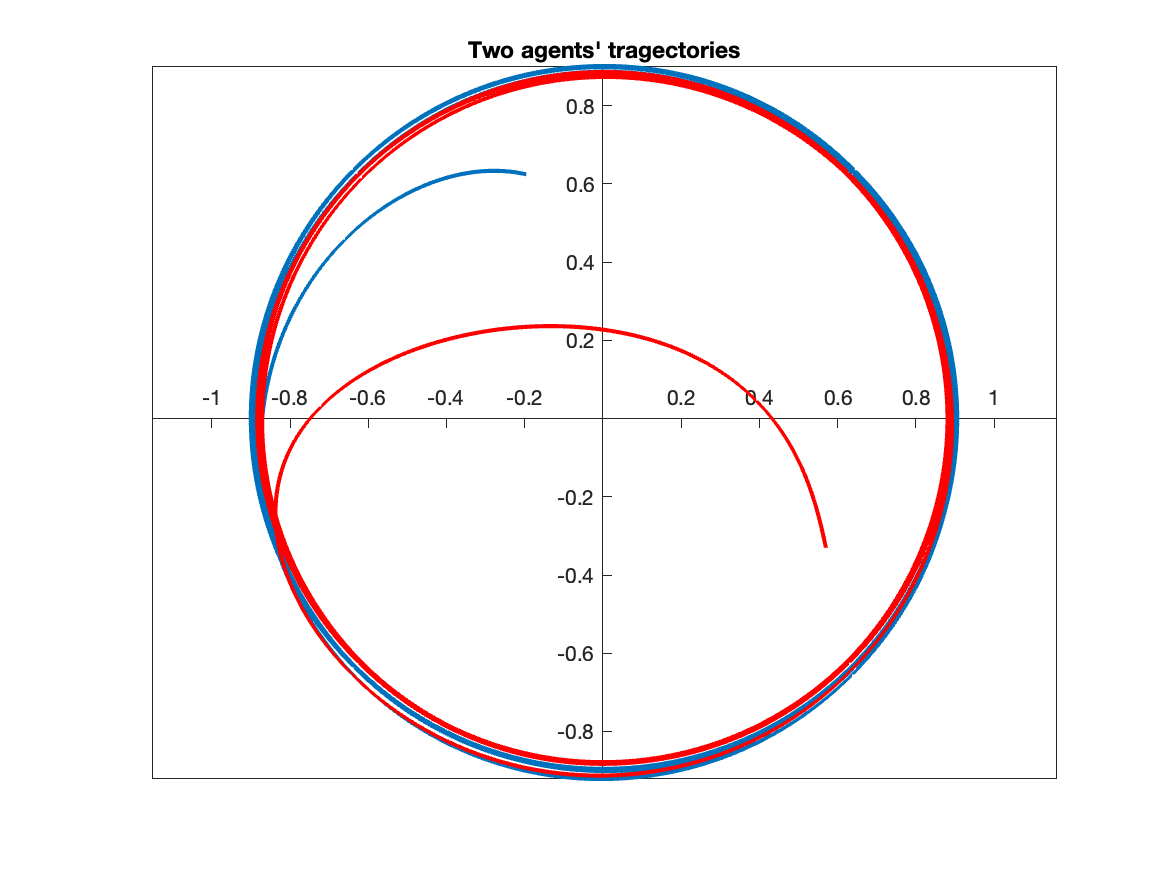}
\includegraphics[width=0.32\textwidth]{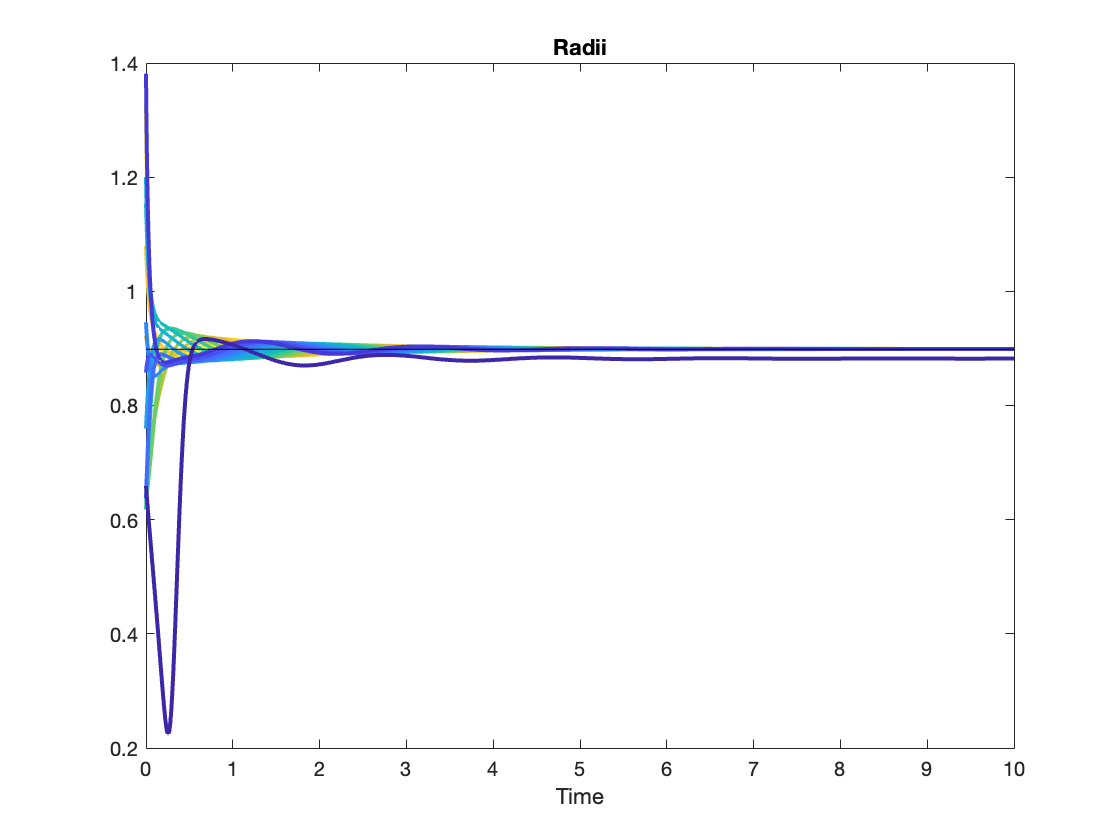}
\includegraphics[width=0.32\textwidth]{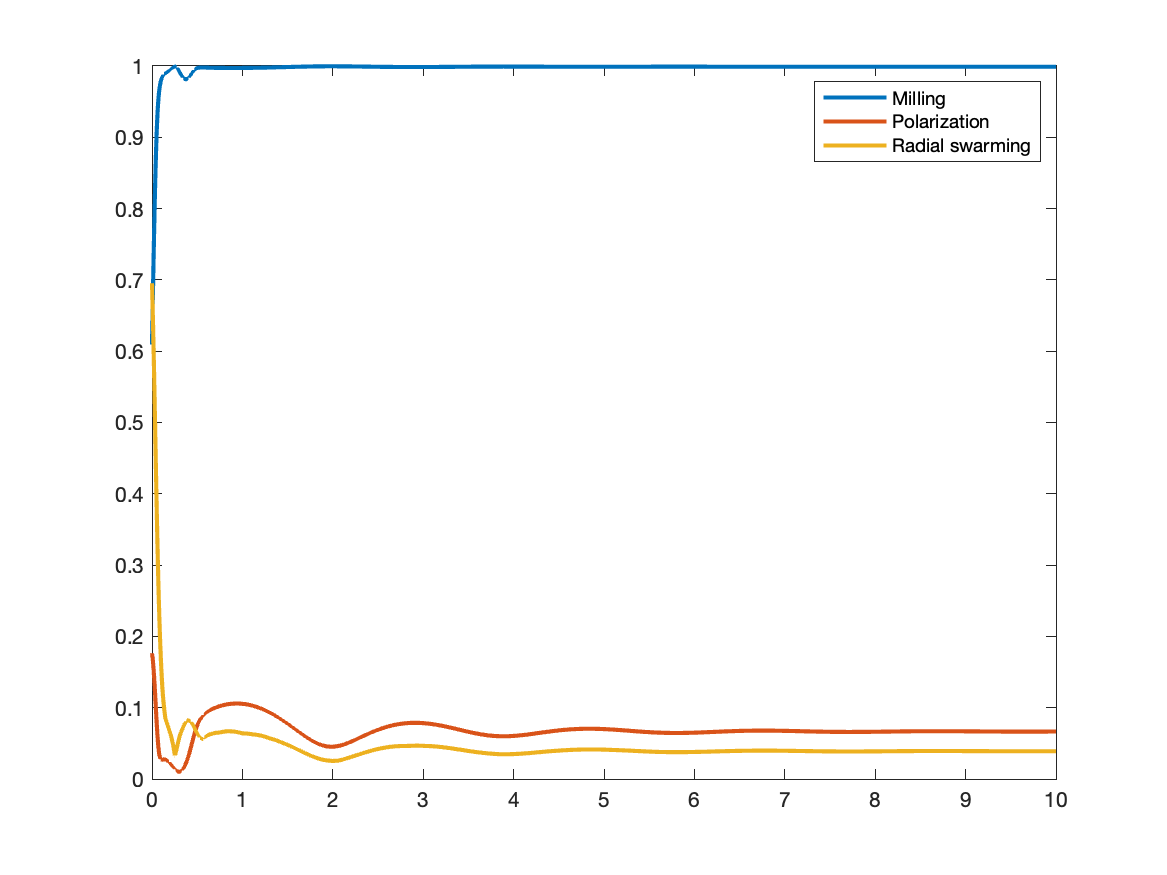}\\
\includegraphics[width=0.32\textwidth]{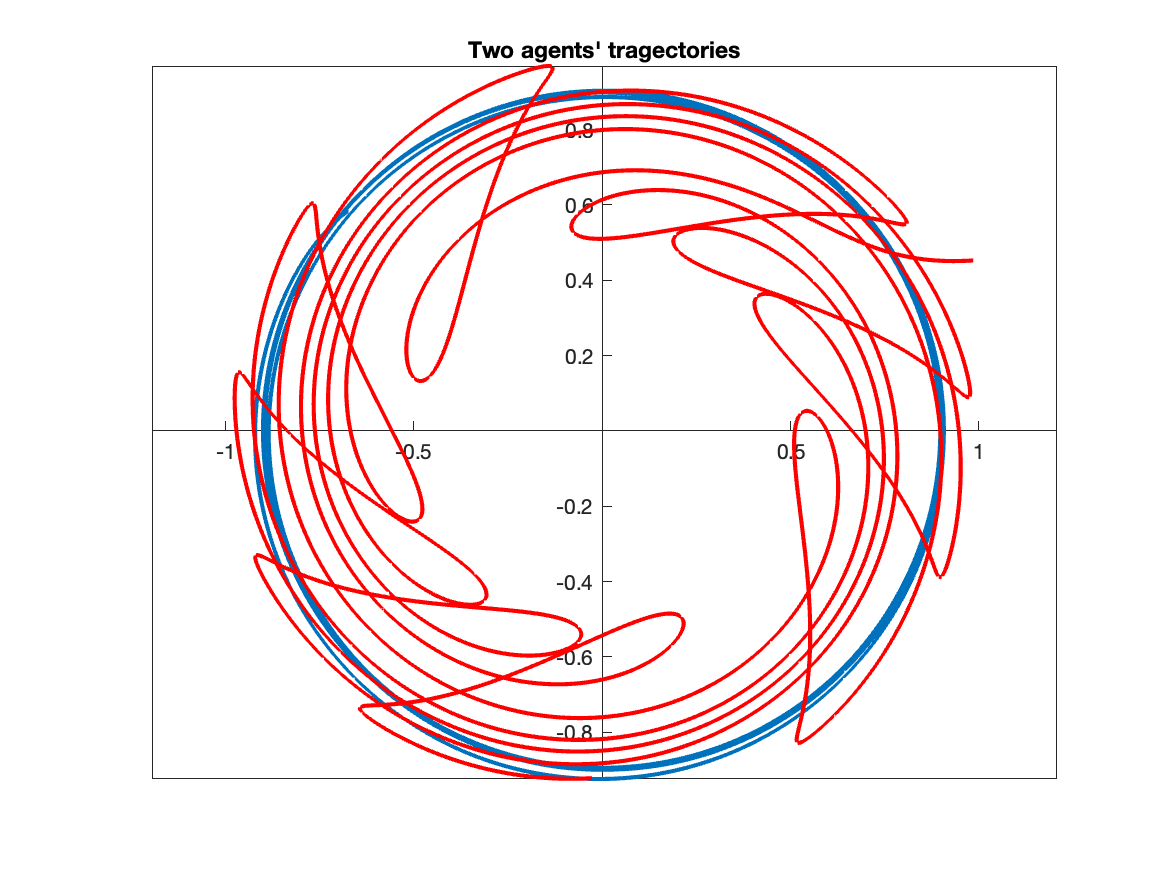}
\includegraphics[width=0.32\textwidth]{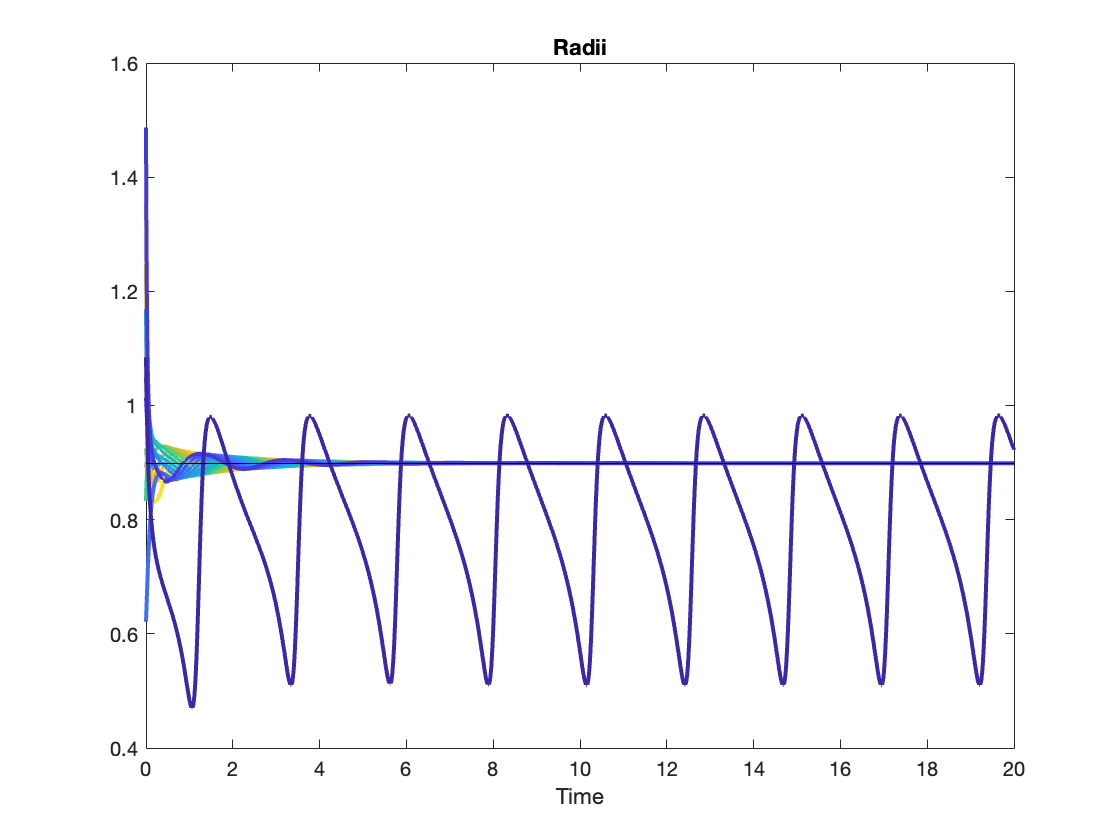}
\includegraphics[width=0.32\textwidth]{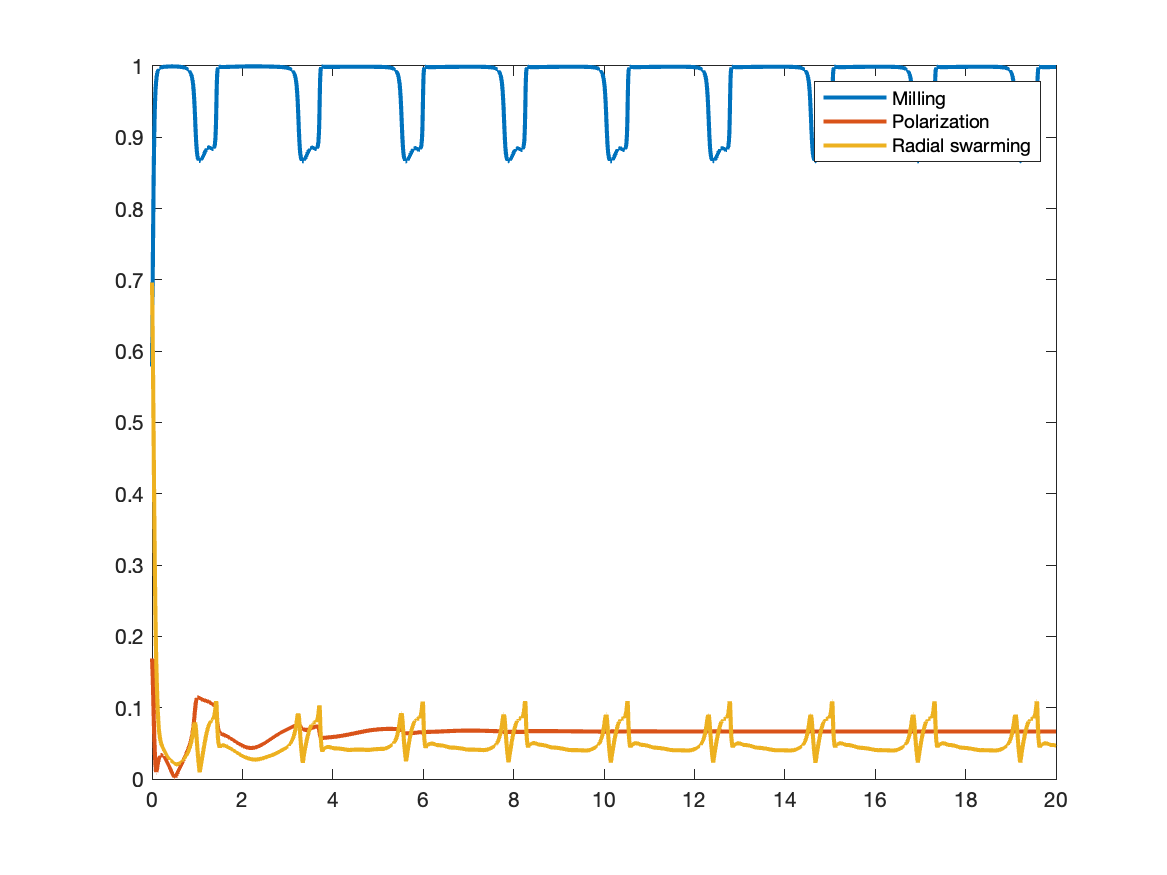}\\
\caption{Left: Trajectories of the leader $j=5$ (blue) and the follower $j=N$ (red) for $a=5>a^*$ (top row) and $a=3<a^*$ (bottom row). Center: Evolution of the radii for $a=5$ (top row) and $a=3$ (bottom row). Right: Evolution of the order parameters for $a=5$ (top row) and $a=3$ (bottom row).
}\label{Fig:Follower}
\end{figure}

\subsection{Metastability}
In this section, we illustrate the different ways that the system may converge to the synchronized state, by going through various transitory phases. 
As in Section \ref{Sec:sim1}, the parameters are chosen to be $\beta=1$, $N=15$, $(A_{1j})_{j\in\{1,\cdots,N\}} = [0,5,1,2,1,0,\cdots,0]$, which gives: $\tau^* = 0.3077$, $R = 0.8216$. We provide a heurestic justification for the qualitatively different metastable dynamics, noting that a more rigorous treatment of the metastable dynamics will be left to a future work.

In the first case (Figure \ref{Fig:Meta1}), we present a situation in which $\tau=0.11<\tau^*$, but for which the stability of the rotational twisted state has already broken down. While in the second case (Figure \ref{Fig:Meta2}), we have $\tau=1>\tau^*$ so that the rotational twisted state no longer exists.

In Section \ref{s:4} it was shown that for $0<\tau<\tau^{**}\leq \tau^*$, the rotational twisted state is guaranteed to be locally stable, where $\tau^{**}$ is the specific value for which stability breaks down from one of the eigenvalues (calculated in Section \ref{s:4}) crossing the imaginary axis. This implies that the invariant manifold for radially symmetric initial data discussed in Subsection \ref{ss:Invariant} is an attracting normally hyperbolic invariant manifold for all $\tau<\tau^{**}$, where the manifold loses hyperbolicity exactly at the value $\tau=\tau^{**}$,  see \cite{Eldering2013,Kuehn2015}.

Thus for $\tau \in (\tau^{**},\tau^*)$, for initial data close to the invariant manifold, we initially see the prolonged rotational dynamics, near the rotational twisted state, but the instability creates growth in one direction until the the twisted state breaks down and the system converges to the synchronized state, as seen in Figure \ref{Fig:Meta1}. As the circulant structure continues to act on the oscillators as they approach the fixed point, the final state represents swarming behavior, $S=1$.\\

Finally for $\tau>\tau^*$, we have two competing mechanisms. Although the twisted state does not exist, the invarainat manifold of subsection \ref{ss:Invariant} persists, however it won't be an attracting manifold as the unstable directions for $\tau<\tau^*$ still remain unstable, and the zero fixed point for which all trajectories on this manifold must converge to, was shown to be unstable in Section \ref{s:5}. Thus, the fact that the invariant manifold produces exponential decay of the radius according to the equation
$$\ddt R(t)= \frac{1}{\tau}\left(1-\frac{\tau}{\tau^*}-\frac{R^2}{\beta^2}\right)R$$

implies that starting close enough to the invariant manifold, the oscillators undergo rapid contraction towards zero at rate $\frac{1}{\tau}-\frac{1}{\tau^*}$. However the instability of the zero fixed point means that once the dynamics are close enough, the system escapes the neighborhood of the unstable fixed point and again converges to the synchronized state. However, the final state represents polarization, $P=1$, as the rapid contraction towards zero essentially eliminates the rotational dynamics and the nonlinear forcing dominates in the final timescale. This regime can be seen in Figure \ref{Fig:Meta2}. Initial data which start sufficiently far from the invariant manifold would not experience such rapid constriction and instead will converge to the fixed point with swarming dynamics as in Figure \ref{Fig:Meta1}.

\begin{figure}[h!]
\centering
\includegraphics[width=0.24\textwidth, trim = 2cm 0.5cm 2cm 0.5cm, clip=true]{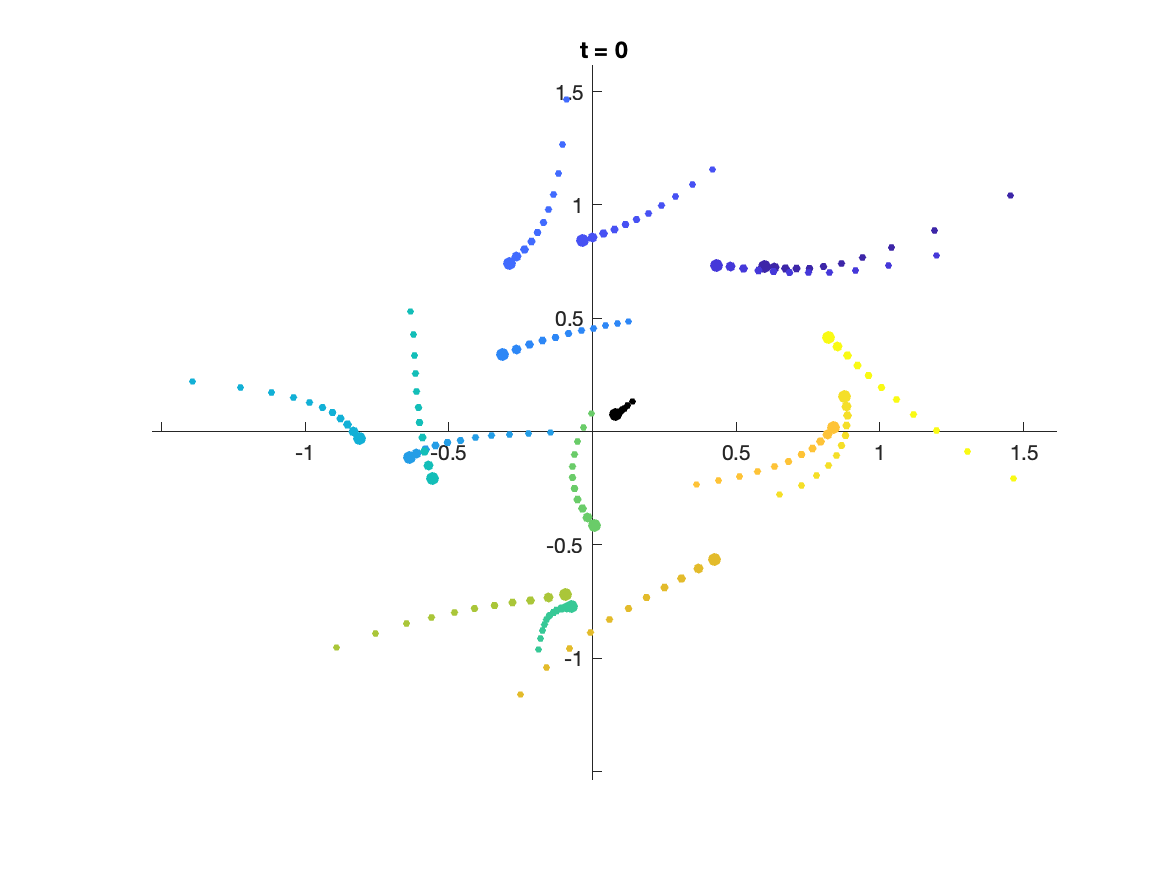}
\includegraphics[width=0.24\textwidth, trim = 2cm 0.5cm 2cm 0.5cm, clip=true]{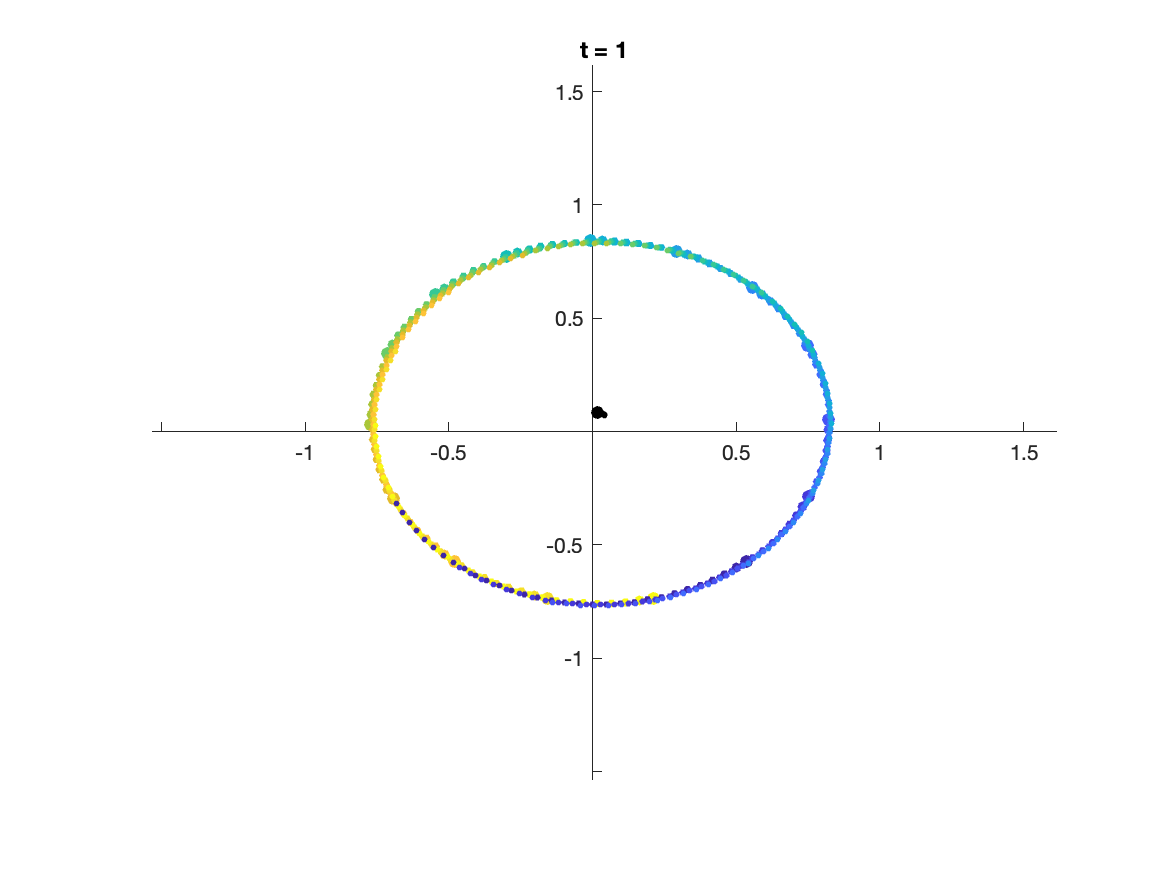}
\includegraphics[width=0.24\textwidth, trim = 2cm 0.5cm 2cm 0.5cm, clip=true]{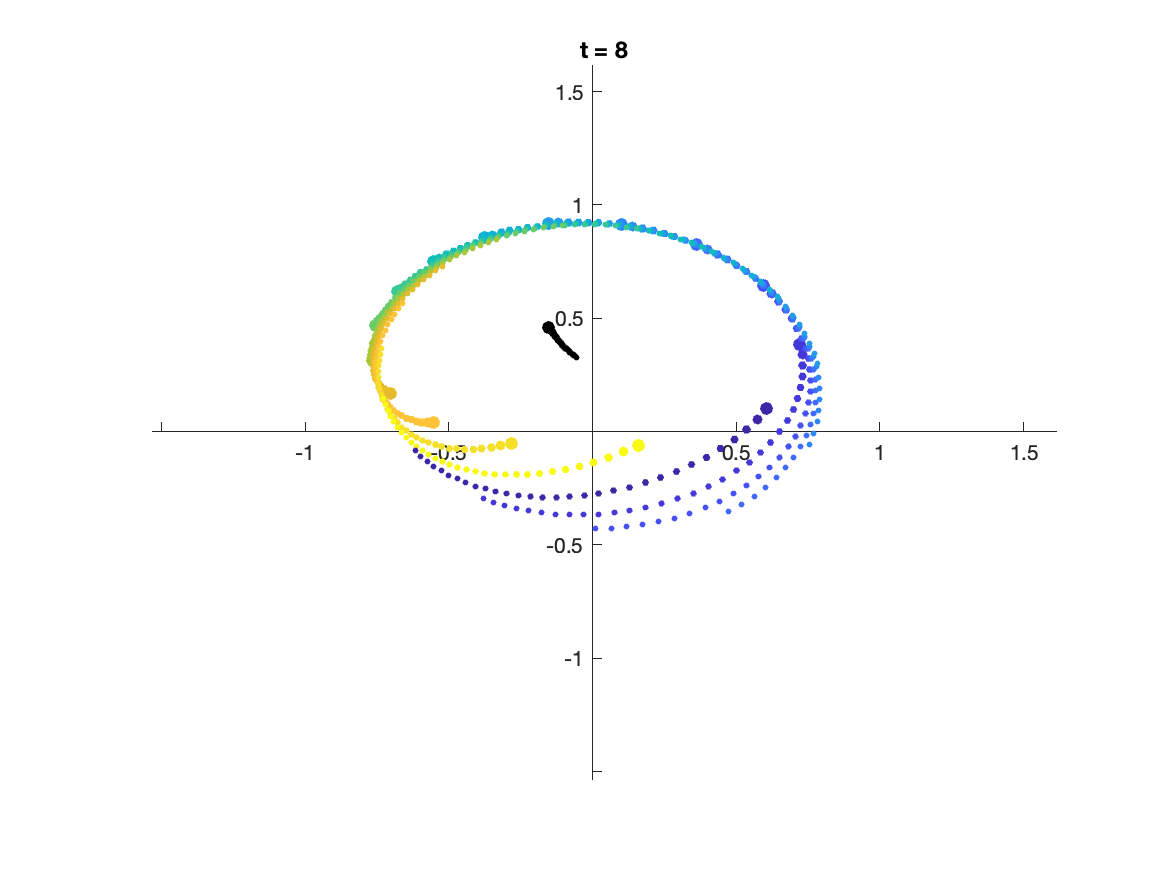}
\includegraphics[width=0.24\textwidth, trim = 2cm 0.5cm 2cm 0.5cm, clip=true]{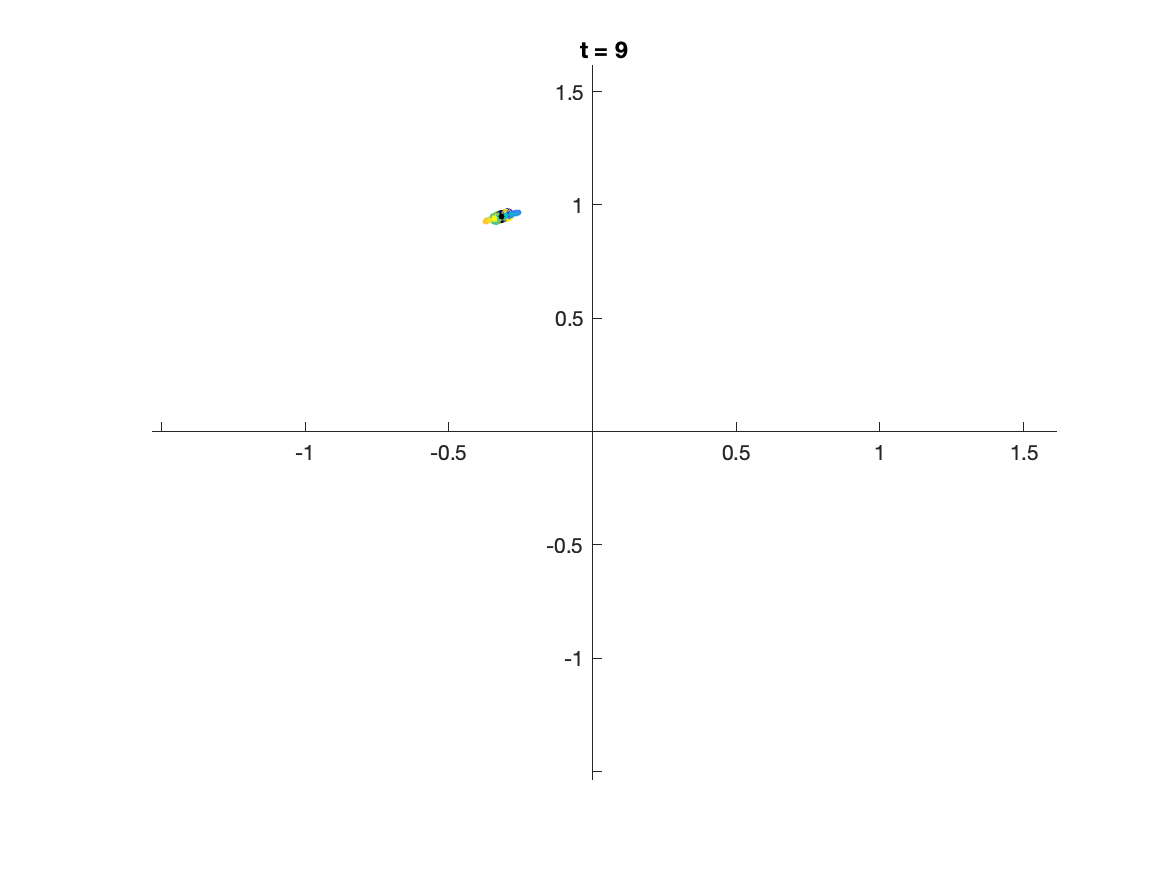}\\
\includegraphics[width=0.32\textwidth]{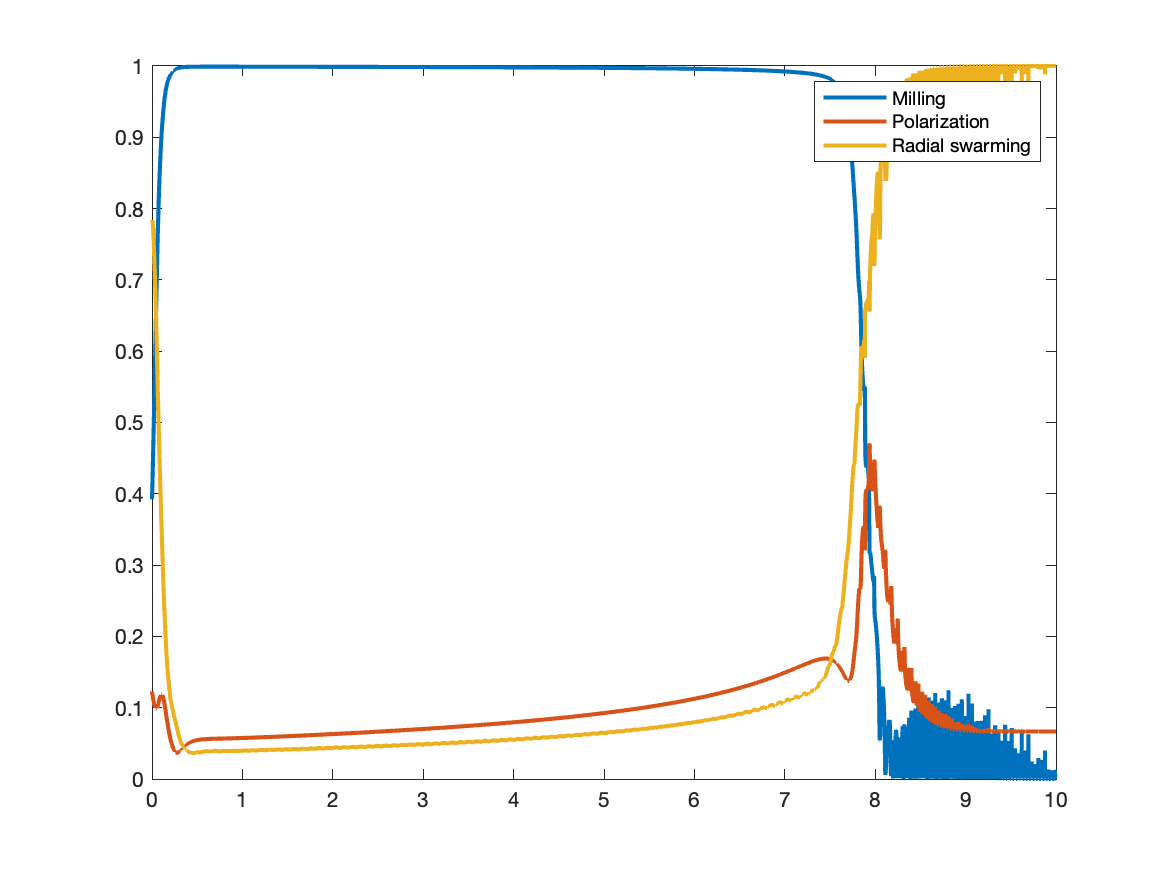}
\includegraphics[width=0.32\textwidth]{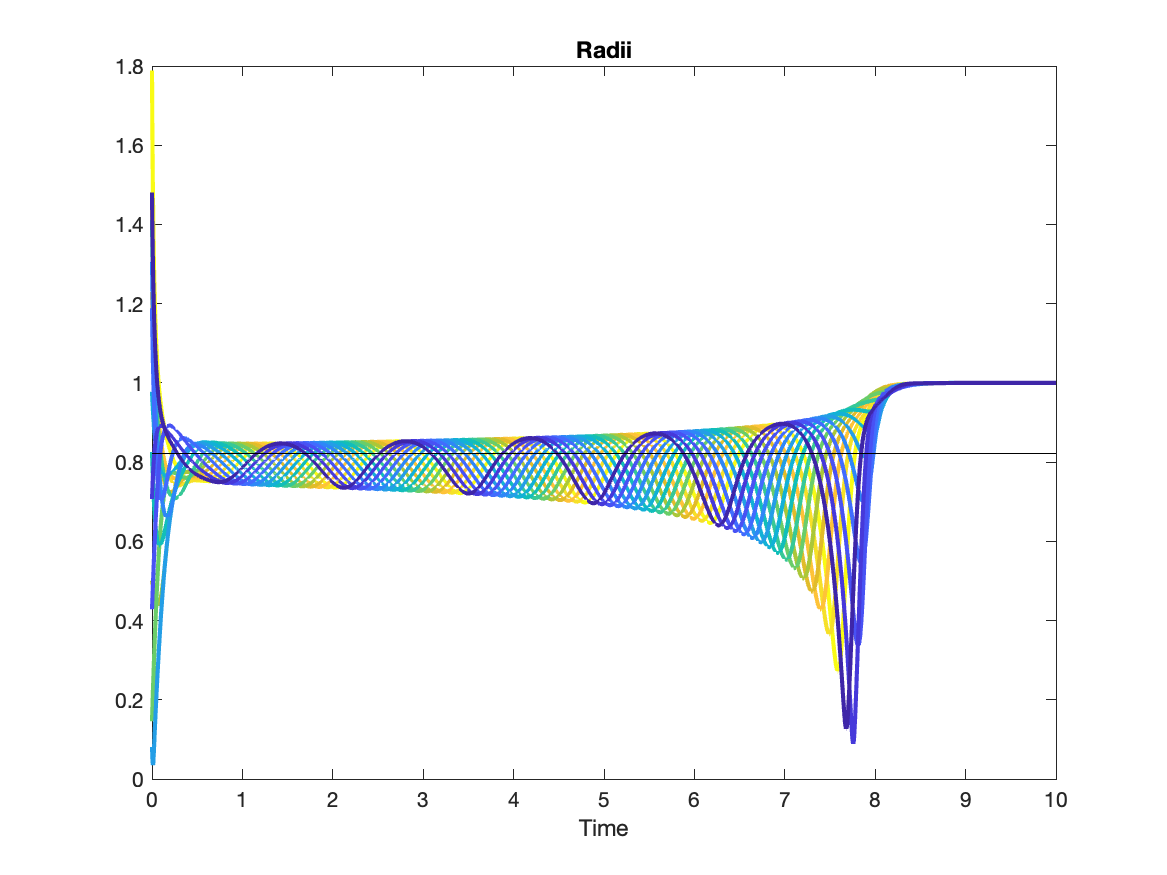}
\includegraphics[width=0.32\textwidth]{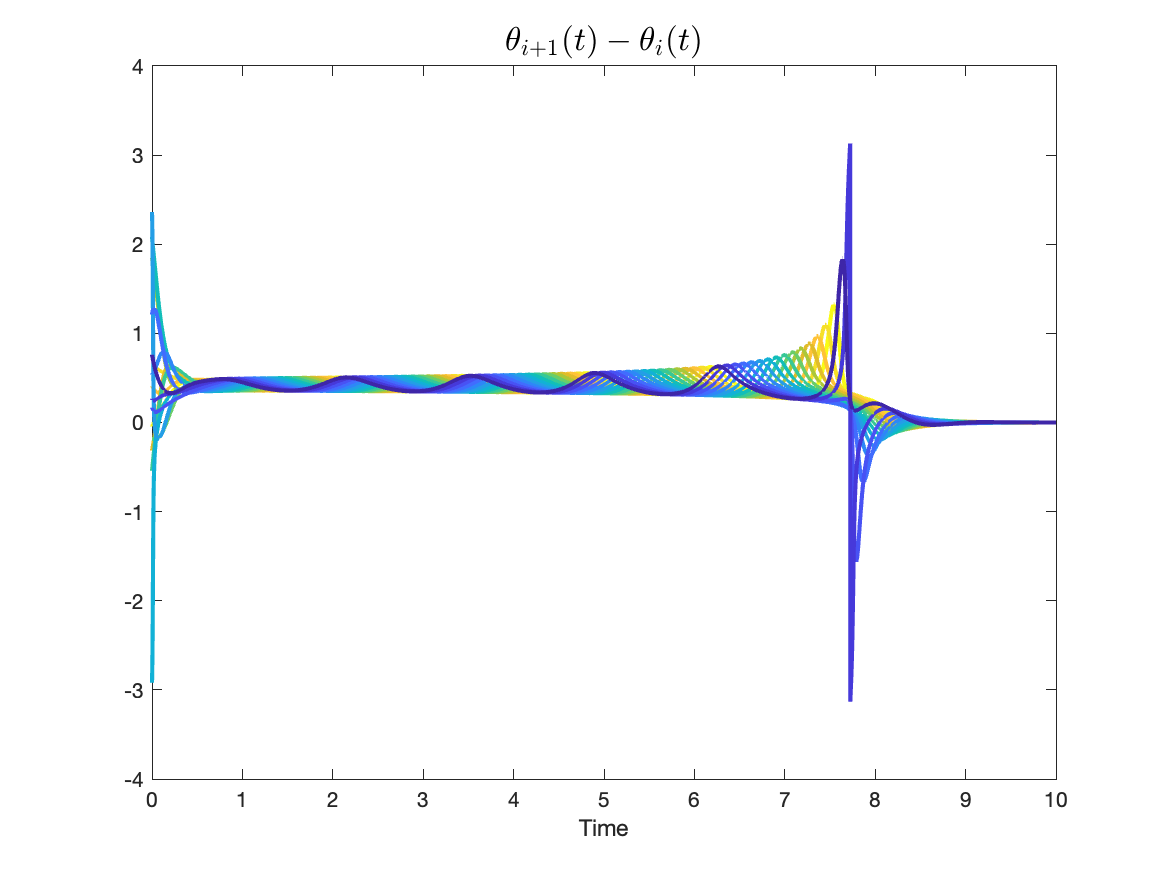}
\caption{Top row. Particles' trajectories at times $t=0.1$, $t=1$, $t=8$ and $t=9$. Each agent is represented in a different color, its last position is represented by the largest dot, and its previous positions by smaller dots. The black dot represents the trajectory of the system's center of mass.\\
Bottom row. Left: Evolution of the three order parameters $M$, $P$ and $S$. Center: Evolution of each agent's radius $r_j$. Right: Evolution of the $N$ angle differences $\theta_{j+1}-\theta_j$.   
}\label{Fig:Meta1}
\end{figure}

\begin{figure}[h!]
\centering
\includegraphics[width=0.24\textwidth, trim = 2cm 0.5cm 2cm 0.5cm, clip=true]{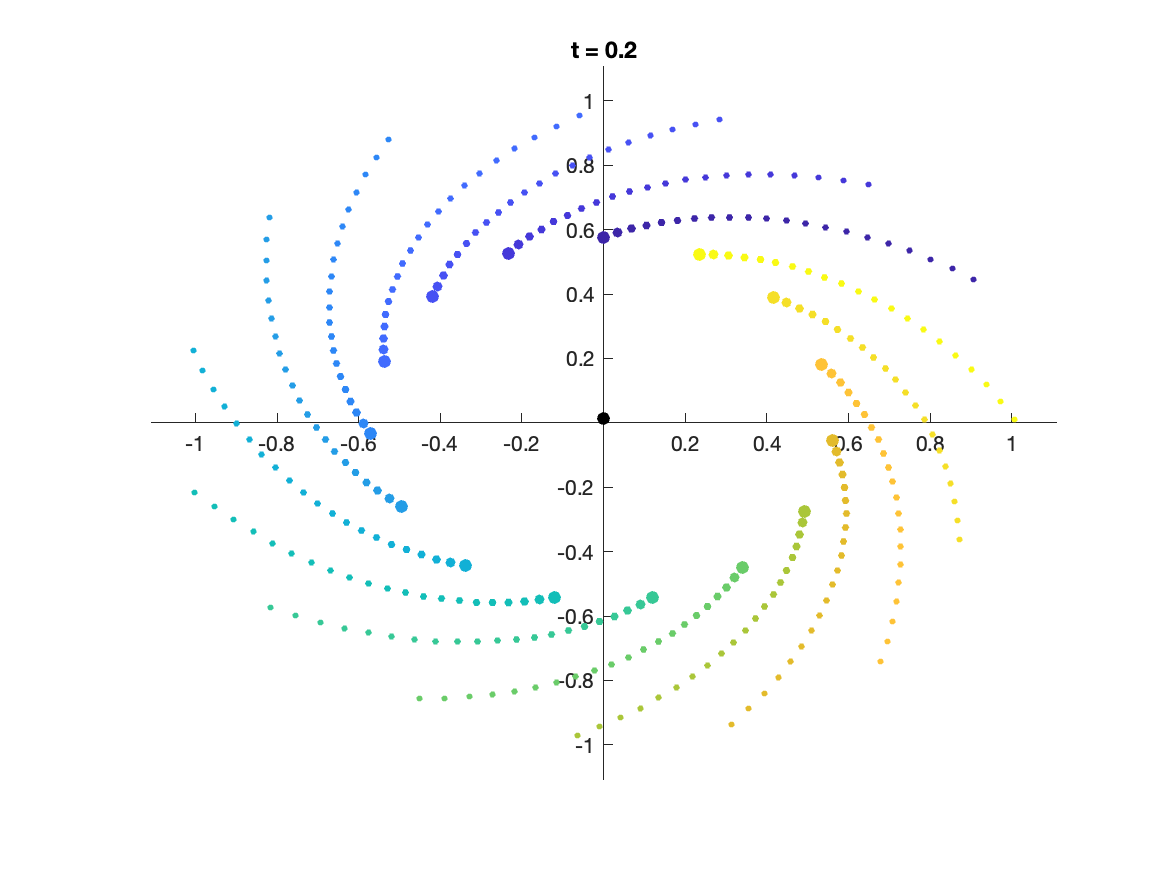}
\includegraphics[width=0.24\textwidth, trim = 2cm 0.5cm 2cm 0.5cm, clip=true]{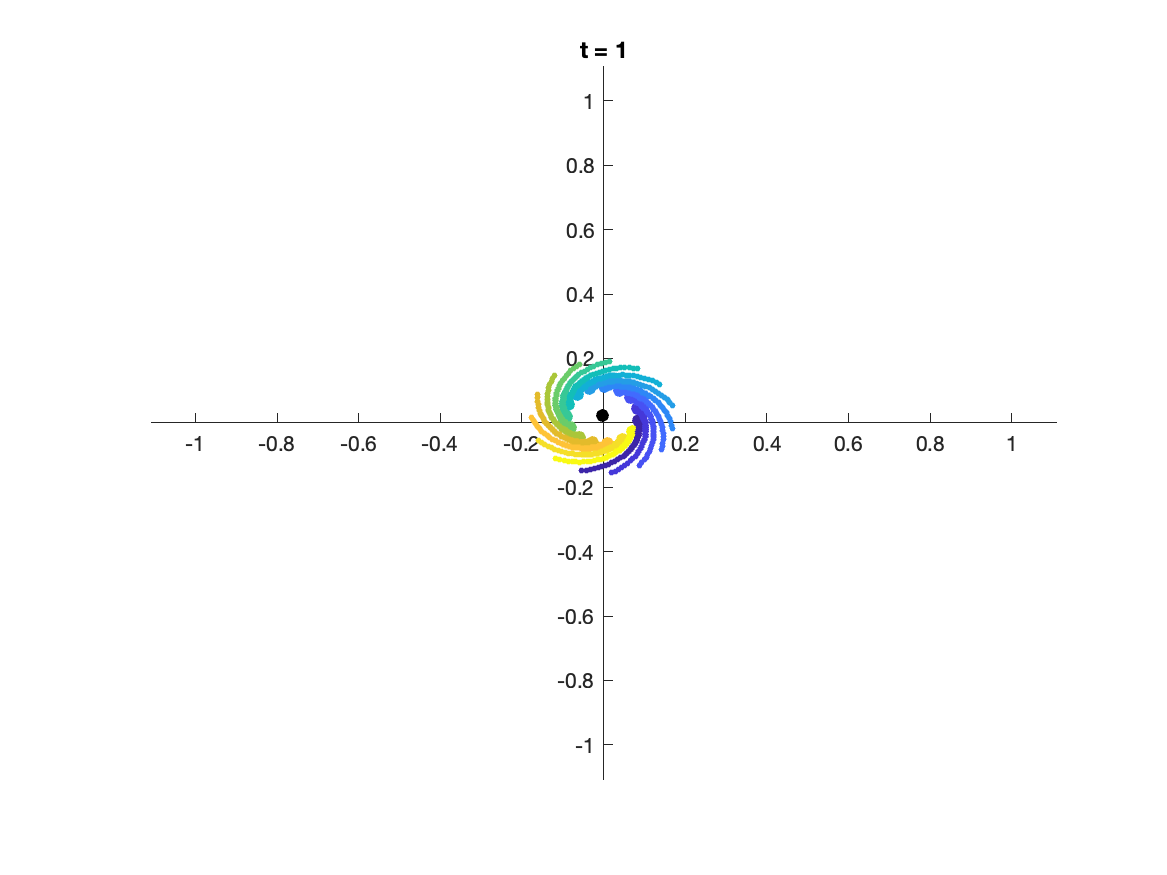}
\includegraphics[width=0.24\textwidth, trim = 2cm 0.5cm 2cm 0.5cm, clip=true]{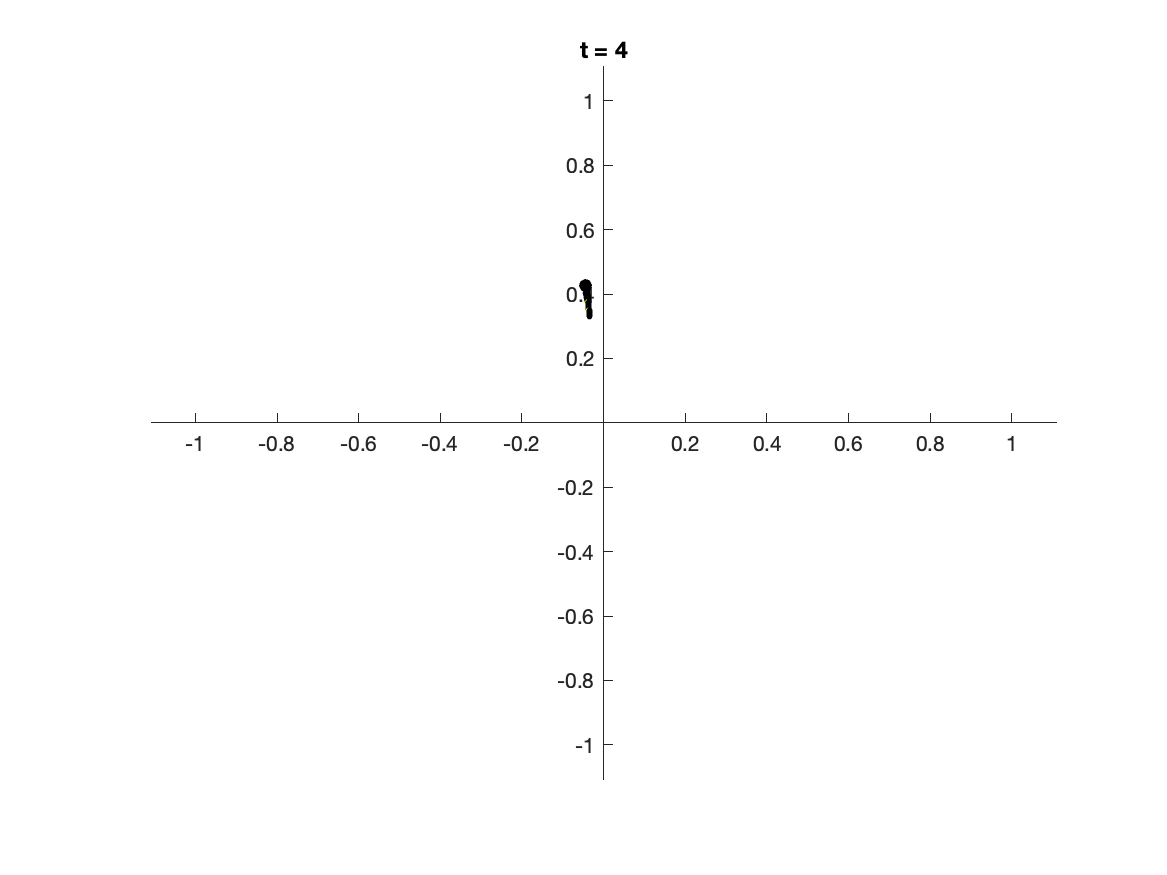}
\includegraphics[width=0.24\textwidth, trim = 2cm 0.5cm 2cm 0.5cm, clip=true]{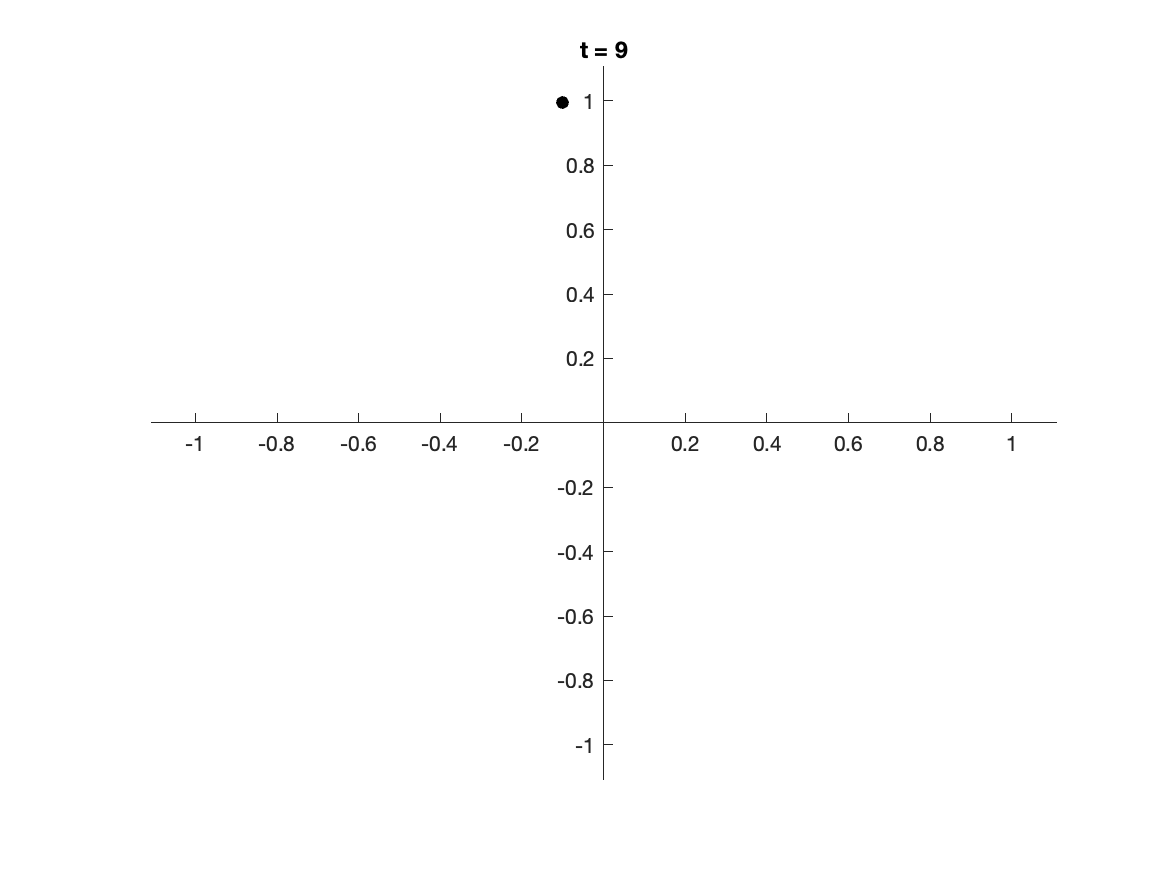}\\
\includegraphics[width=0.32\textwidth]{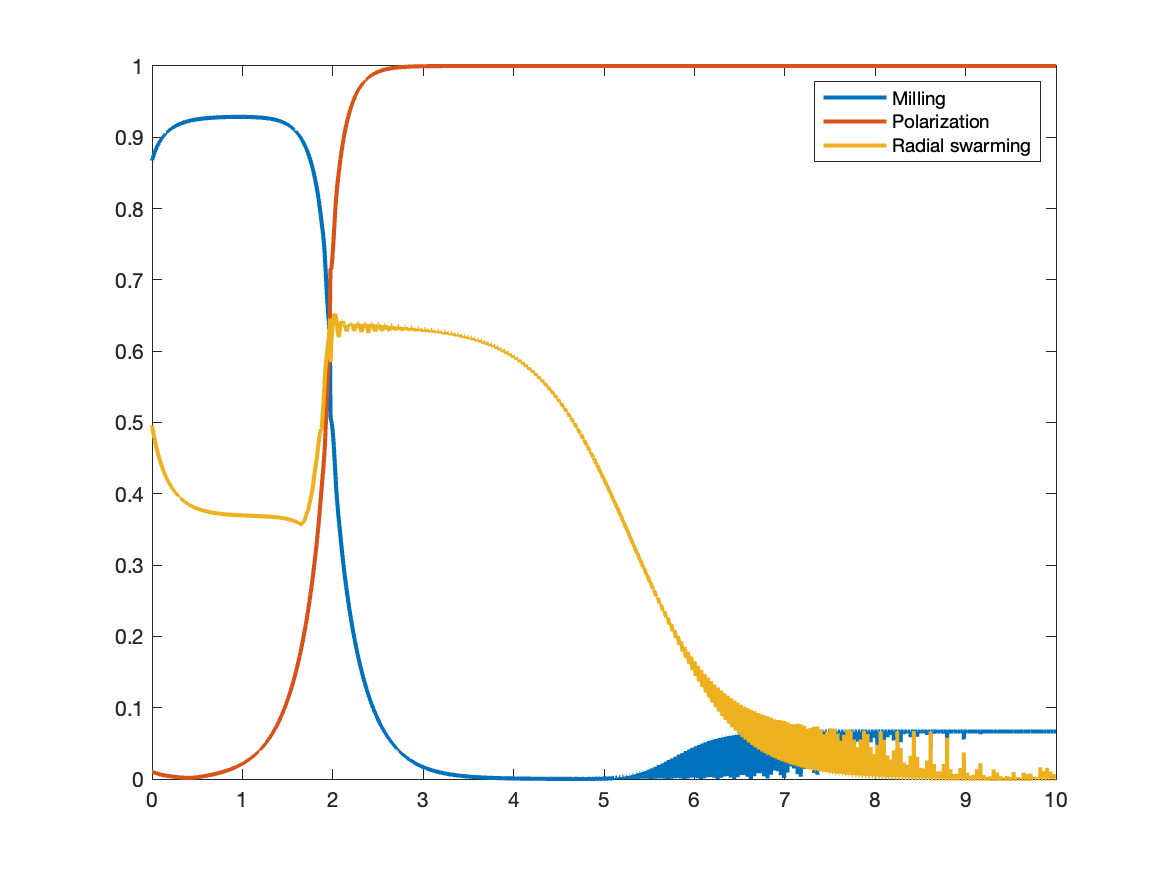}
\includegraphics[width=0.32\textwidth]{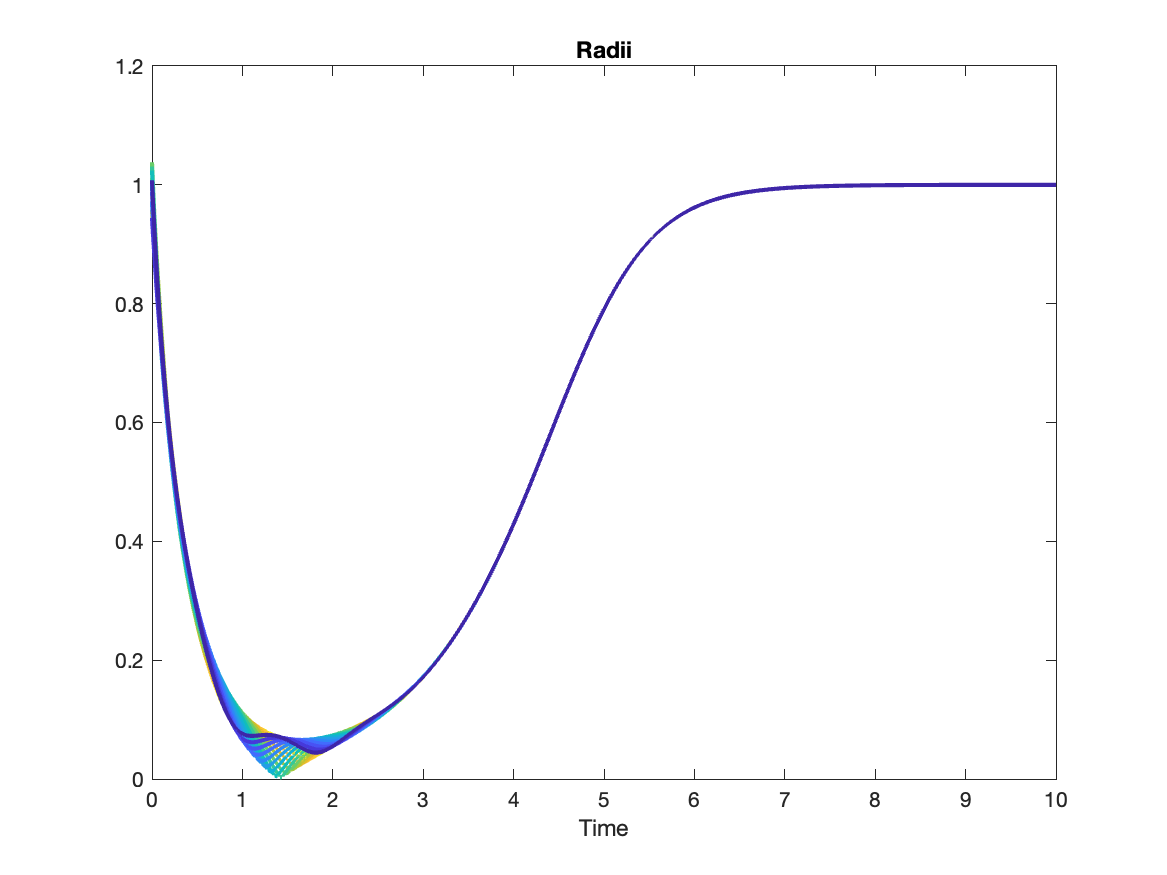}
\includegraphics[width=0.32\textwidth]{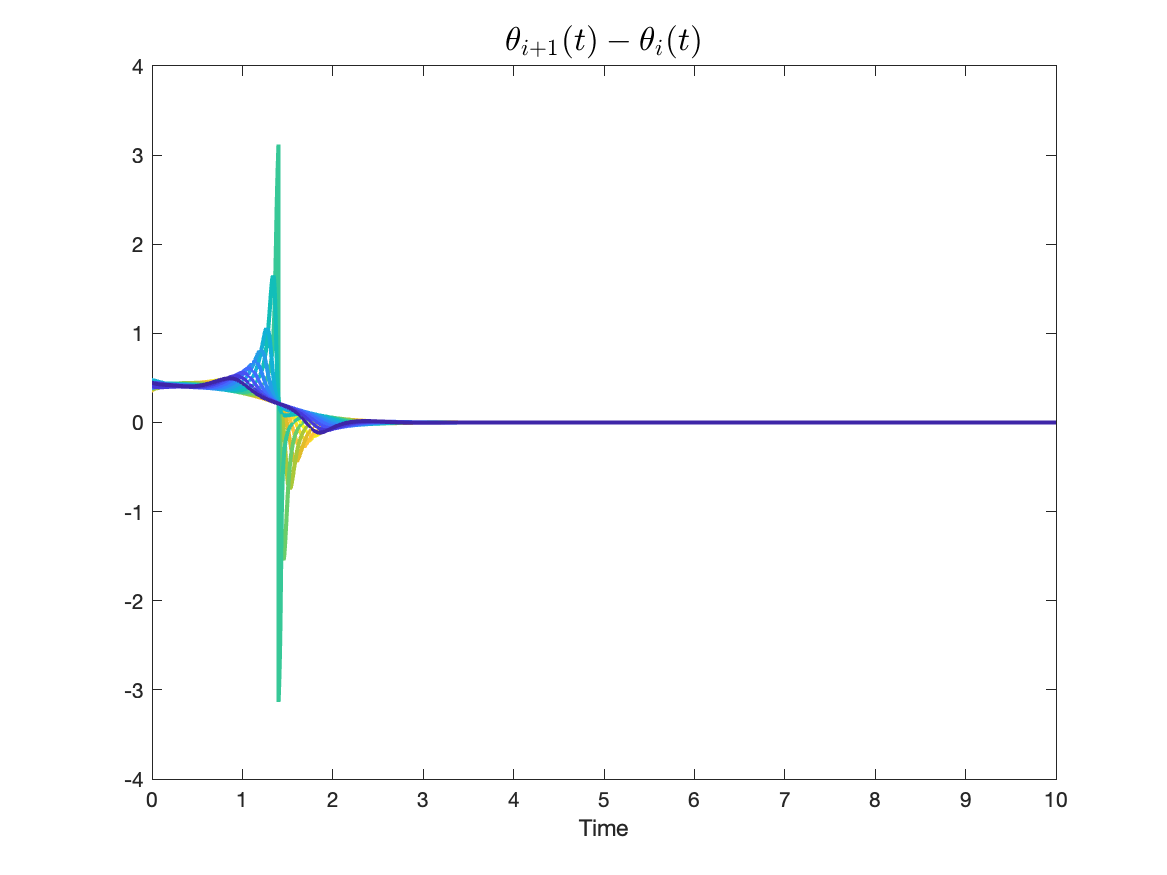}
\caption{Top row. Particles' trajectories at times $t=0.2$, $t=1$, $t=4$ and $t=9$. Each agent is represented in a different color, its last position is represented by the largest dot, and its previous positions by smaller dots. The black dot represents the trajectory of the system's center of mass.\\
Bottom row. Left: Evolution of the three order parameters $M$, $P$ and $S$. Center: Evolution of each agent's radius $r_j$. Right: Evolution of the $N$ angle differences $\theta_{j+1}-\theta_j$.   
}\label{Fig:Meta2}
\end{figure}

\section*{Acknowledgments}
DP's work has been supported by the State Research Agency of the Spanish Ministry of Science and FEDER-EU, grant  PID2022-137228OB-I00 (MICIU/AEI/10.13039/501100011033); by Consejer\'ia de Universidad, Investigaci\'on e Innovaci\'on (Junta de Andaluc\'ia), Grant QUAL21-011 of Modeling Nature Research Unit, and by Consejer\'ia de Universidad, Investigaci\'on e Innovaci\'on \& ERDF/EU Andalusia Program, grant C-EXP-265-UGR23. DNR has been partially supported by the Modeling Nature (MNat) Research Unit, project QUAL21-011. DNR has received funding from the National Science Centre, Poland, grant number 2023/50/A/ST1/00447. NPD has received support by the grant ANR-24-CE40-4644-01 funded by the Agence Nationale de la Recherche for the project ``FISH''.
This work was initiated during the sabbatical year of NPD at the University of Granada and Instituto Universitario de Matem\'aticas of Granada. She thanks both institutions for their hospitality.

\bibliographystyle{plain}
\bibliography{BibMilling}

@article{McQuadePiccoliPouradierDuteil19,
	Author = {McQuade, Sean and Piccoli, Benedetto and {Pouradier Duteil}, Nastassia},
	Doi = {10.1142/S0218202519400037},
	Fjournal = {Mathematical Models and Methods in Applied Sciences},
	Issn = {0218-2025},
	Journal = {Math. Models Methods Appl. Sci.},
	Mrclass = {34A34 (34C40 37N40 91D10)},
	Mrnumber = {3945373},
	Mrreviewer = {Ichiro Tsukamoto},
	Number = {4},
	Pages = {681--716},
	Title = {{Social dynamics models with time-varying influence}},
	Url = {https://doi.org/10.1142/S0218202519400037},
	Volume = {29},
	Year = {2019}}

@article{MT14,
	Author = {Motsch, S. and Tadmor, E.},
	Journal = {SIAM Review},
	Number = {4},
	Pages = {577--621},
	Title = {Heterophilious dynamics enhances consensus},
	Volume = {56},
	Year = {2014}}

@article{WileyStrogatzGirvan06,
    author = {Wiley, Daniel A. and Strogatz, Steven H. and Girvan, Michelle},
    title = "{The size of the sync basin}",
    journal = {Chaos: An Interdisciplinary Journal of Nonlinear Science},
    volume = {16},
    number = {1},
    year = {2006},
    month = {03},
    issn = {1054-1500},
    doi = {10.1063/1.2165594},
    url = {https://doi.org/10.1063/1.2165594},
    note = {015103},
    eprint = {https://pubs.aip.org/aip/cha/article-pdf/doi/10.1063/1.2165594/14598261/015103\_1\_online.pdf},
}

@article{B22,
author = {{Bonnet-Weill}, Beno\^{\i}t and {{Pouradier Duteil}}, Nastassia  and {Sigalotti}, Mario},
title = {Consensus formation in first-order graphon models with time-varying topologies},
journal = {Mathematical Models and Methods in Applied Sciences},
volume = {32},
number = {11},
pages = {2121-2188},
year = {2022},
note = {\href{https://doi.org/10.1142/S0218202522500518}{DOI}}
}

@article{CuckerSmale07,
	Author = {Cucker,F. and Smale,S.},
	Journal = {IEEE Transactions on Automatic Control},
	Pages = {852--862},
	Title = {Emergent behavior in flocks},
	Volume = {52},
	Year = {2007}}

@article{Ballerini08,
  title={Interaction ruling animal collective behavior depends on topological rather than metric distance: Evidence from a field study},
  author={Ballerini, Michele and Cabibbo, Nicola and Candelier, Raphael and Cavagna, Andrea and Cisbani, Evaristo and Giardina, Irene and Lecomte, Vivien and Orlandi, Alberto and Parisi, Giorgio and Procaccini, Andrea and others},
  journal={Proceedings of the national academy of sciences},
  volume={105},
  number={4},
  pages={1232--1237},
  year={2008},
  publisher={National Acad Sciences},
  doi = {10.1073/pnas.0711437105},
  URL = {https://www.pnas.org/doi/abs/10.1073/pnas.0711437105},
  eprint = {https://www.pnas.org/doi/pdf/10.1073/pnas.0711437105}
  }

@article{Hoppensteadt1997,
	address = {New York, NY},
	author = {Hoppensteadt, Frank C. and Izhikevich, Eugene M.},
	title = {Weakly Connected Oscillators},
	pages = {247--293},
	journal = {Springer New York},
	year = {1997}}

@inproceedings{Kuramoto75,
  address = {New York, NY, USA},
  author = {Kuramoto, Y.},
  booktitle = {International Symposium on Mathematical Problems in Theoretical Physics, Lecture Notes in Physics, Vol. 39,},
  editor = {Araki, H.},
  pages = {420--422},
  publisher = {Springer},
  title = {Self-entrainment of a population of coupled nonlinear oscillators},
  year = 1975
}

@article{Vicsek95,
  title={Novel type of phase transition in a system of self-driven particles},
  author={Vicsek, Tam{\'a}s and Czir{\'o}k, Andr{\'a}s and Ben-Jacob, Eshel and Cohen, Inon and Shochet, Ofer},
  journal={Physical review letters},
  volume={75},
  number={6},
  pages={1226},
  year={1995},
  publisher={APS}
}

@article{HaLiu09,
author = {Seung-Yeal Ha and Jian-Guo Liu},
title = {{A simple proof of the {C}ucker-{S}male flocking dynamics and mean-field limit}},
volume = {7},
journal = {Communications in Mathematical Sciences},
number = {2},
publisher = {International Press of Boston},
pages = {297 -- 325},
year = {2009},
}

@article{Ha_2010,
	author = {Ha, Seung-Yeal and Ha, Taeyoung and Kim, Jong-Ho},
	journal = {Journal of Physics A: Mathematical and Theoretical},
	month = {jul},
	number = {31},
	pages = {315201},
	title = {Asymptotic dynamics for the {C}ucker--{S}male-type model with the {R}ayleigh friction},
	volume = {43},
	year = {2010}}

@article{Ashraf16,
author = {Ashraf, I.  and {{\smash{Godoy-Diana}}}, R.  and Halloy, J.  and Collignon, B.  and {Thiria}, B. },
title = {Synchronization and collective swimming patterns in fish ({\textit{Hemigrammus bleheri}})},
journal = {Journal of The Royal Society Interface},
volume = {13},
number = {123},
pages = {20160734},
year = {2016},
note = {\href{https://doi.org/10.1098/rsif.2016.0734}{DOI}},
}

@article{CarrilloDOrsognaPanferov09,
title = {Double milling in self-propelled swarms from kinetic theory},
journal = {Kinetic and Related Models},
volume = {2},
number = {2},
pages = {363-378},
year = {2009},
issn = {1937-5093},
note = {\href{https://www.aimsciences.org/article/id/51183b23-1022-4990-8f78-422b88c0aeb9}{DOI}},
author = {Jos\'e A. Carrillo and M. R. D'Orsogna and V. Panferov}
}

@article{Calovi14,
note = {\href{https://dx.doi.org/10.1088/1367-2630/16/1/015026}{DOI}},
year = {2014},
month = {jan},
publisher = {IOP Publishing},
volume = {16},
number = {1},
pages = {015026},
author = {Daniel S Calovi and Ugo Lopez and Sandrine Ngo and Cl\'ement Sire and Hugues Chat\'e and Guy Theraulaz},
title = {Swarming, schooling, milling: phase diagram of a data-driven fish school model},
journal = {New Journal of Physics}
}

@article{Lafoux23,
	author = {Lafoux, Baptiste and Moscatelli, Jeanne and {{\smash{Godoy-Diana}}}, Ramiro and {{Thiria}}, Benjamin},
	journal = {Communications Biology},
	number = {1},
	pages = {585},
	title = {Illuminance-tuned collective motion in fish},
	volume = {6},
	year = {2023}}

@article{Couzin02,
title = {Collective Memory and Spatial Sorting in Animal Groups},
journal = {Journal of Theoretical Biology},
volume = {218},
number = {1},
pages = {1-11},
year = {2002},
issn = {0022-5193},
note = {\href{https://www.sciencedirect.com/science/article/pii/S0022519302930651}{DOI}},
author = {Iain D. Couzin and Jens Krause and Richard James and Graeme D. Ruxton and Nigel R. Franks}
}

@article{GuRohling19,
title = {Heterogeneity of neuronal properties determines the collective behavior of the neurons in the suprachiasmatic nucleus},
journal = {Mathematical Biosciences and Engineering},
volume = {16},
number = {4},
pages = {1893-1913},
year = {2019},
issn = {1551-0018},
note = {\href{https://www.aimspress.com/article/doi/10.3934/mbe.2019092}{DOI}},
author = {Changgui Gu and  Ping Wang and  Tongfeng Weng and  Huijie Yang and  Jos Rohling},
}

@article{KrauseGodinBrown1996,
  title={Phenotypic variability within and between fish shoals},
  author={Krause, Jens and Godin, Jean-Guy J and Brown, David},
  journal={Ecology},
  volume={77},
  number={5},
  pages={1586--1591},
  year={1996},
  publisher={Wiley Online Library}
}

@article{LRS,
  title={Grassmannian reduction of {C}ucker-{S}male systems and dynamical opinion games},
  author={Lear, Daniel and Reynolds, David N and Shvydkoy, Roman},
  journal={Discrete \& Continuous Dynamical Systems},
  volume={41},
  number={12},
  year={2021}
}

@article{Moreau2005,
  title={Stability of multiagent systems with time-dependent communication links},
  author={Moreau, Luc},
  journal={IEEE Transactions on automatic control},
  volume={50},
  number={2},
  pages={169--182},
  year={2005},
  publisher={IEEE}
}

@article{Aoki82,
  title={A Simulation Study on the Schooling Mechanism in Fish},
  author={Aoki, Ichiro},
  journal={Nippon Suisan Gakkaishi},
  volume={48},
  number={8},
  year={1982},
  note={\href{https://doi.org/10.2331/suisan.48.1081}{DOI}}
}

@article{CristianiFrascaPiccoli2011,
  title={Effects of anisotropic interactions on the structure of animal groups},
  author={Cristiani, Emiliano and Frasca, Paolo and Piccoli, Benedetto},
  journal={Journal of mathematical biology},
  volume={62},
  number={4},
  pages={569--588},
  year={2011},
  publisher={Springer}
}

@article{CarrilloChoiHauray2018,
  title={Mean-field limit for collective behavior models with sharp sensitivity regions},
  author={Carrillo, Jos{\'e} A and Choi, Young-Pil and Hauray, Maxime and Salem, Samir},
  journal={Journal of the European Mathematical Society},
  volume={21},
  number={1},
  pages={121--161},
  year={2018}
}

@article{millan2025synchronizationcoupledstuartlandauoscillators,
      title={Synchronization of coupled {Stuart-Landau} oscillators: How heterogeneity can facilitate synchronization}, 
      author={Ana P Mill\'an and David Poyato and David N Reynolds and Francesco Tudisco},
      year={2025},
      journal={arXiv.2510.05243}
}

@article{Sclosa_2024,
	author = {Sclosa, Davide},
	journal = {Nonlinearity},
	month = {aug},
	number = {9},
	pages = {095021},
	title = {Completely degenerate equilibria of the {K}uramoto model on networks},
	volume = {37},
	year = {2024}}

@article{Reynolds_2025,
	author = {Reynolds, David N and Tudisco, Francesco},
	journal = {Nonlinearity},
	month = {may},
	number = {5},
	pages = {055027},
	title = {Unique Nash equilibrium of a nonlinear model of opinion dynamics on networks with friction-inspired stubbornness},
	volume = {38},
	year = {2025}}

@article{reynolds2025consensuspolarizationoptimizationmean,
      title={Consensus, polarization, and optimization of the mean value in a nonlinear model of opinion dynamics}, 
      author={David N. Reynolds and Pedro J. Torres},
      year={2025},
      journal={arXiv.2509.14918} 
}

@article{PhysRevLett.93.104101,
  title = {Aging Transition and Universal Scaling in Oscillator Networks},
  author = {Daido, Hiroaki and Nakanishi, Kenji},
  journal = {Phys. Rev. Lett.},
  volume = {93},
  issue = {10},
  pages = {104101},
  numpages = {4},
  year = {2004},
  month = {Aug},
  publisher = {American Physical Society},
  doi = {10.1103/PhysRevLett.93.104101},
  url = {https://link.aps.org/doi/10.1103/PhysRevLett.93.104101}
}

@article{PhysRevE.75.056206,
  title = {Aging and clustering in globally coupled oscillators},
  author = {Daido, Hiroaki and Nakanishi, Kenji},
  journal = {Phys. Rev. E},
  volume = {75},
  issue = {5},
  pages = {056206},
  numpages = {16},
  year = {2007},
  month = {May},
  publisher = {American Physical Society},
  doi = {10.1103/PhysRevE.75.056206},
  url = {https://link.aps.org/doi/10.1103/PhysRevE.75.056206}
}

@article{RevModPhys.90.031001,
  title = {Colloquium: Criticality and dynamical scaling in living systems},
  author = {Mu\~noz, Miguel A.},
  journal = {Rev. Mod. Phys.},
  volume = {90},
  issue = {3},
  pages = {031001},
  numpages = {30},
  year = {2018},
  month = {Jul},
  publisher = {American Physical Society},
  doi = {10.1103/RevModPhys.90.031001},
  url = {https://link.aps.org/doi/10.1103/RevModPhys.90.031001}
}

@article{NMDDSSCKLHG2023,
	author = {van Nifterick, Anne M. and Mulder, Danique and Duineveld, Denise J. and Diachenko, Marina and Scheltens, Philip and Stam, Cornelis J. and van Kesteren, Ronald E. and Linkenkaer-Hansen, Klaus and Hillebrand, Arjan and Gouw, Alida A.},
	journal = {Scientific Reports},
	number = {1},
	pages = {7419},
	title = {Resting-state oscillations reveal disturbed excitation--inhibition ratio in {A}lzheimer's disease patients},
	volume = {13},
	year = {2023}}

@article{PAD2024,
	author = {Ponce-Alvarez, Adri{\'a}n and Deco, Gustavo},
	journal = {Scientific Reports},
	number = {1},
	pages = {2615},
	title = {The Hopf whole-brain model and its linear approximation},
	volume = {14},
	year = {2024}}

@article{Sahoo_2019,
	author = {Sahoo, Samir and Varshney, Vaibhav and Prasad, Awadhesh and Ramaswamy, Ram},
	journal = {Journal of Physics A: Mathematical and Theoretical},
	month = {oct},
	number = {46},
	pages = {464001},
	title = {Ageing in mixed populations of {S}tuart--{L}andau oscillators: the role of diversity},
	volume = {52},
	year = {2019}}

@article{10.1371/journal.pcbi.1010662,
	author = {Sanz Perl, Yonatan AND Escrichs, Anira AND Tagliazucchi, Enzo AND Kringelbach, Morten L. AND Deco, Gustavo},
	journal = {PLOS Computational Biology},
	month = {11},
	number = {11},
	pages = {1-32},
	title = {Strength-dependent perturbation of whole-brain model working in different regimes reveals the role of fluctuations in brain dynamics},
	volume = {18},
	year = {2022}}

@article{Thome_2026,
doi = {10.1088/1367-2630/ae3cb5},
url = {https://doi.org/10.1088/1367-2630/ae3cb5},
year = {2026},
month = {feb},
publisher = {IOP Publishing},
volume = {28},
number = {2},
pages = {023902},
author = {Thom\'e, Nicolas and Murakami, Yukiteru and Krischer, Katharina},
title = {Synchronized bimodal amplitude patterns in heterogeneous oscillatory media-experiment and theory},
journal = {New Journal of Physics},
}

@article{PANTELEY2015645,
	author = {Elena Panteley and Antonio Loria and Ali El Ati},
	journal = {IFAC-PapersOnLine},
	number = {11},
	pages = {645-650},
	title = {On the Stability and Robustness of {S}tuart-{L}andau Oscillators},
	volume = {48},
	year = {2015}}

@inproceedings{10.1007/BFb0013365,
	address = {Berlin, Heidelberg},
	author = {Kuramoto, Yoshiki},
	booktitle = {International Symposium on Mathematical Problems in Theoretical Physics},
	editor = {Araki, Huzihiro},
	pages = {420--422},
	publisher = {Springer Berlin Heidelberg},
	title = {Self-entrainment of a population of coupled non-linear oscillators},
	year = {1975}}

@article{strogatz2000,
  author = {Strogatz, Steven H.},
  title = {From {K}uramoto to {C}rawford: exploring the onset of synchronization in populations of coupled oscillators},
  journal = {Physica D},
  volume = {143},
  number = {1-4},
  pages = {1--20},
  year = {2000}
}

@article{acebron2005,
  author = {Acebr{\'o}n, Juan A. and Bonilla, Luis L. and Vicente, Conrad J. P. and Ritort, Felix and Spigler, Renato},
  title = {The {K}uramoto model: A simple paradigm for synchronization phenomena},
  journal = {Reviews of Modern Physics},
  volume = {77},
  number = {1},
  pages = {137--185},
  year = {2005}
}

@article{rodrigues2016,
  author = {Rodrigues, Francisco A. and Peron, Thomas K. D. M. and Ji, Peng and Kurths, J{\"u}rgen},
  title = {The {K}uramoto model in complex networks},
  journal = {Physics Reports},
  volume = {610},
  pages = {1--98},
  year = {2016}
}

@article{taylor2012,
  author = {Taylor, Richard},
  title = {There is no non-zero stable fixed point for dense networks in the homogeneous {K}uramoto model},
  journal = {Journal of Physics A: Mathematical and Theoretical},
  volume = {45},
  number = {5},
  pages = {055102},
  year = {2012},
  doi = {10.1088/1751-8113/45/5/055102}
}

@article{ling2019,
  author = {Ling, Shuyang and Xu, Ruozhou and Bandeira, Afonso S.},
  title = {On the landscape of synchronization networks: A perspective from nonconvex optimization},
  journal = {SIAM Journal on Optimization},
  volume = {29},
  number = {3},
  pages = {1879--1907},
  year = {2019},
  doi = {10.1137/18M1228269}
}

@article{lu2020,
  author = {Lu, Jianfeng and Steinerberger, Stefan},
  title = {Synchronization of {K}uramoto oscillators in dense networks},
  journal = {Nonlinearity},
  volume = {33},
  number = {12},
  pages = {6677--6696},
  year = {2020},
  doi = {10.1088/1361-6544/abb1b0}
}

@article{kassabov2019,
  author = {Kassabov, Martin and Strogatz, Steven H. and Townsend, Alex},
  title = {Universal bounds on the critical coupling strength for the {K}uramoto model},
  journal = {Physical Review E},
  volume = {99},
  number = {2},
  pages = {022205},
  year = {2019},
  doi = {10.1103/PhysRevE.99.022205}
}

@article{yoneda2021,
  author = {Yoneda, Takashi and Tatsukawa, Yuki and Ohta, Hiromichi},
  title = {Improved lower bounds for synchronization thresholds in the {K}uramoto model},
  journal = {Nonlinearity},
  volume = {34},
  number = {6},
  pages = {4107--4125},
  year = {2021},
  doi = {10.1088/1361-6544/abf1c2}
}

@article{townsend2017,
  author = {Townsend, Alex and Stillman, Mike and Strogatz, Steven H.},
  title = {Dense networks that do not synchronize and sparse ones that do},
  journal = {Chaos},
  volume = {27},
  number = {7},
  pages = {073119},
  year = {2017},
  doi = {10.1063/1.4999601}
}

@article{MedTang2015,
	author = {Medvedev, Georgi S. and Tang, Xuezhi},
	journal = {Journal of Nonlinear Science},
	number = {6},
	pages = {1169--1208},
	title = {Stability of Twisted States in the {K}uramoto Model on {C}ayley and Random Graphs},
	volume = {25},
	year = {2015}}

@article{Panteley01022020,
	author = {Elena Panteley and Antonio Lor{\'\i}a and Ali El-Ati},
	journal = {International Journal of Control},
	number = {2},
	pages = {261--273},
	title = {Practical dynamic consensus of {S}tuart--{L}andau oscillators over heterogeneous networks},
	volume = {93},
	year = {2020}}

@article{PROSKURNIKOV2018166,
	author = {Anton V. Proskurnikov and Roberto Tempo},
	journal = {Annual Reviews in Control},
	pages = {166-190},
	title = {A tutorial on modeling and analysis of dynamic social networks. Part II},
	volume = {45},
	year = {2018}}

@article{HegselmannKrause02,
	author = {Hegselmann Rainer and Ulrich Krause},
	journal = {Journal of Artificial Societies and Social Simulation},
	number = {3},
	title = {Opinion Dynamics and Bounded Confidence: Models, Analysis and Simulation},
	volume = {5},
	year = {2002}
}

@article{Kuehn2015,
	address = {Cham},
	author = {Kuehn, Christian},
	title = {General {F}enichel {T}heory},
	pages = {19--51},
	journal = {Springer International Publishing},
	year = {2015}}

@article{geshkovski2025mathematical,
  title={A mathematical perspective on {T}ransformers},
  author={Geshkovski, Borjan and Letrouit, Cyril and Polyanskiy, Yury and Rigollet, Philippe},
  journal={Bulletin of the American Mathematical Society},
  volume={62},
  number={3},
  year={2025},
  publisher={American Mathematical Society}
}

@article{geshkovski2024dynamic,
  title={Dynamic metastability in the self-attention model},
  author={Geshkovski, Borjan and Koubbi, Hugo and Polyanskiy, Yury and Rigollet, Philippe},
  journal={arXiv preprint arXiv:2410.06833},
  year={2024}
}

@inproceedings{10.1145/3774904.3792675, author = {Zhang, Kaicheng and Reynolds, David N. and Deidda, Piero and Tudisco, Francesco}, title = {{S}tuart-{L}andau Oscillatory Graph Neural Network}, year = {2026}, isbn = {9798400723070}, publisher = {Association for Computing Machinery}, address = {New York, NY, USA}, url = {https://doi.org/10.1145/3774904.3792675}, doi = {10.1145/3774904.3792675}, booktitle = {Proceedings of the ACM Web Conference 2026}, pages = {1469-1480}, numpages = {12}, location = {United Arab Emirates}, series = {WWW '26} }

@article{Berglund_2025,
	author = {Berglund, Nils and Medvedev, Georgi S and Simpson, Gideon},
	journal = {Nonlinearity},
	month = {sep},
	number = {9},
	pages = {095031},
	title = {Metastability in the stochastic nearest-neighbour {K}uramoto model of coupled phase oscillators},
	volume = {38},
	year = {2025}}

@article{Med2,
	author = {Medvedev, Georgi S. and Mizuhara, Matthew S.},
	journal = {Journal of Nonlinear Science},
	number = {4},
	pages = {70},
	title = {The {K}uramoto Model on the {S}ierpinski {G}asket II: {T}wisted {S}tates},
	volume = {36},
	year = {2026}}

@article{Eldering2013,
	address = {Paris},
	author = {Eldering, Jaap},
	pages = {1--33},
	journal = {Atlantis Press},
	title = {Introduction},
	year = {2013}}

\end{document}